\newtheorem{theorem}{Theorem}
\newtheorem{lemma}[theorem]{Lemma}
\newtheorem{proposition}[theorem]{Proposition}
\newtheorem{examp}{Example}
\newtheorem{corollary}[theorem]{Corollary}
\newtheorem{remar}[theorem]{Remark}
\newtheorem{ex}{Exercise}
\newtheorem{prob}{Open Problem}
\newenvironment{proof}{Proof:\ \ \ }{\QED}
\newenvironment{example}{\begin{examp}\rm}{\diams\end{examp}}
\newenvironment{remark}{\begin{remar}\rm}{\end{remar}}
\newcommand{\diams}{\unskip\nobreak\hfil\penalty50%
\hskip1em\hbox{}\nobreak\hfil%
$\diamondsuit$\parfillskip=0pt\finalhyphendemerits=0}
\newcommand{\QED}{{\unskip\nobreak\hfil\penalty50%
\hskip1em\hbox{}\nobreak\hfil $\Box$%
\parfillskip=0pt \finalhyphendemerits=0 \par\medskip\noindent}}
\newcommand{\bfind}[1]{\index{#1}{\bf #1}}
\newcommand{\n}{\par\noindent}
\newcommand{\sn}{\par\smallskip\noindent}
\newcommand{\bn}{\par\bigskip\noindent}
\newcommand{\pars}{\par\smallskip}
\newcommand{\parm}{\par\medskip}
\newcommand{\parb}{\par\bigskip}
\newcommand{\isom}{\simeq}
\newcommand{\ovl}[1]{\overline{#1}}
\newcommand{\chara}{\mbox{\rm char}\,}
\newcommand{\trdeg}{\mbox{\rm trdeg}\,}
\newcommand{\Quot}{\mbox{\rm Quot}\,}
\newcommand{\PSF}{\mbox{\rm PSF}\,}
\newcommand{\adresse}{\par\bigskip \small\rm
   Mathematical Sciences Group, 
   University of Saskatchewan, \par
   106 Wiggins Road, 
   Saskatoon, Saskatchewan, Canada S7N 5E6 \par
   email: fvk@math.usask.ca }
\font\tenlv=msbm10 scaled 1200
\font\sevenlv=msbm7 scaled 1200
\font\fivelv=msbm5 scaled 1200
\def\lv #1{{\mathchoice{{\hbox{\tenlv #1}}}{{\hbox{\tenlv #1}}}
{{\hbox{\sevenlv #1}}}{{\hbox{\fivelv #1}}}}}
\newcommand{\N}{\lv N}
\newcommand{\Q}{\lv Q}
\newcommand{\Z}{\lv Z}
\newcommand{\F}{\lv F}
\begin{document}
\title{Dense subfields of henselian fields, and integer
parts\footnote{This paper was written while I was a guest of the Equipe
G\'eom\'etrie et Dynamique, Institut Math\'ematiques de Jussieu, Paris,
and of the Equipe Alg\`ebre--G\'eom\'etrie at the University of
Versailles. I gratefully acknowledge their hospitality and support.
I was also partially supported by a Canadian NSERC grant and by a
sabbatical grant from the University of Saskatchewan. Furthermore I am
endebted to the organizers of the conference in Teheran and the members
of the IPM and all our friends in Iran for their hospitality and
support. I also thank the two referees as well as A.\ Fornasiero for
their careful reading of the paper and their useful suggestions. This
paper is dedicated to Salma Kuhlmann who got me interested in the
subject and provided the personal contacts that inspired and supported
my work.}}
\author{Franz-Viktor Kuhlmann}
\date{26.\ 6.\ 2005}
\maketitle
\begin{abstract}\noindent
{\footnotesize\rm
We show that every henselian valued field $L$ of residue characteristic
$0$ admits a proper subfield $K$ which is dense in $L$. We present
conditions under which this can be taken such that $L|K$ is
transcendental and $K$ is henselian. These results are of interest for
the investigation of integer parts of ordered fields. We present
examples of real closed fields which are larger than the quotient fields
of all their integer parts. Finally, we give rather simple examples of
ordered fields that do not admit any integer part and of valued fields
that do not admit any subring which is an additive complement of the
valuation ring.}
\end{abstract}
%
%
%
%
\section{Introduction}
At the ``Logic, Algebra and Arithmetic'' Conference, Teheran 2003,
Mojtaba Moniri asked the following question: {\it Does every
non-archimedean ordered real closed field $L$ admit a proper dense
subfield $K$?} This question is interesting since if such a subfield $K$
admits an integer part $I$ then $I$ is also an integer part for $L$, but
the quotient field of $I$ lies in $K$ and is thus smaller than $L$. An
\bfind{integer part} of an ordered field $K$ is a discretely ordered
subring $I$ with $1$ such that for all $a\in K$ there is $r\in I$ such
that $r\leq a<r+1$. It follows that the element $r$ is uniquely
determined, and in particular that $1$ is the least positive element in
$I$.

Since the natural valuation of a non-archimedean ordered real closed
field $L$ is non-trivial, henselian and has a (real closed) residue
field $Lv$ of characteristic $0$, the following theorem answers the
above question to the affirmative:

\begin{theorem}                             \label{MT1}
Every henselian non-trivially valued field $(L,v)$ with a residue field
of characteristic $0$ admits a proper subfield $K$ which is dense in
$(L,v)$. This subfield $K$ can be chosen such that $L|K$ is algebraic.
\end{theorem}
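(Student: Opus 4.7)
The plan is to realize $L$ as the henselization $K^{h}$ of a proper non-henselian subfield $K$ of $L$. A valued field is always dense in its henselization, and in residue characteristic $0$ the henselization is an immediate algebraic extension, so such a $K$ would automatically be dense in $L$ with $L|K$ algebraic, which is exactly what the theorem asks.

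First I would establish a reduction. A subfield $K\subseteq L$ is dense in $L$ if and only if $L|K$ is an immediate extension, i.e.\ $vK=vL$ and $Kv=Lv$. Density forces these equalities, since an element of $L$ whose value (or normalised residue) lies outside $vK$ (or $Kv$) cannot be approximated in $K$ to arbitrary precision. The converse is the standard iterative argument: given $a\in L$ and $b\in K$ with $a\neq b$, write $a-b=du$ with $d\in K$ of value $v(a-b)$ and $v(u)=0$, lift $\ovl{u}\in Lv=Kv$ to some $e\in K$ with $\ovl{e}=\ovl{u}$, and replace $b$ by $b+de$, strictly increasing $v(a-b)$. Combined with the fact that in residue characteristic $0$ every henselian field is defectless in algebraic extensions, so that any immediate algebraic extension of a henselian field is trivial, this shows that any subfield $K\neq L$ of $L$ with $L|K$ algebraic and $vK=vL$, $Kv=Lv$ automatically satisfies $L=K^{h}$ and is dense in $L$.

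It therefore suffices to construct such a subfield $K$. I would take a transcendence basis $T$ of $L$ over the prime field $\Q$; this basis is non-empty since the valuation is trivial on $\Q$ but non-trivial on $L$, so $L$ has positive transcendence degree over $\Q$. The extension $L|\Q(T)$ is algebraic, and I would enlarge $\Q(T)$ inside $L$ by adjoining algebraic elements of $L$ realising generators of $vL$ over $v\Q(T)$ and lifts of generators of $Lv$ over $\Q(T)v$. Every such adjunction is algebraic over $\Q(T)$, so $L|K$ remains algebraic, and by construction $vK=vL$ and $Kv=Lv$.

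The main obstacle is ensuring the enlarged $K$ remains strictly smaller than $L$, equivalently non-henselian. Because algebraic extensions of henselian fields are henselian, an incautious enlargement can accidentally include $\Q(T)^{h}$ and force $K=K^{h}=L$. I would handle this either by choosing $T$ rich enough that $\Q(T)$ itself already satisfies $v\Q(T)=vL$ and $\Q(T)v=Lv$ (then $K=\Q(T)$ works: a rational function field under a non-trivial valuation is never henselian in residue characteristic $0$, as witnessed by any Hensel polynomial $X^{2}-(1+t)$ for a suitable $t\in T$), or by a Zorn-type minimal selection of algebraic generators at each enlargement stage to avoid absorbing $\Q(T)^{h}$. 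The delicate technical core is realising the full value group and residue field of $L$ without accidentally producing a henselian subfield, particularly when $vL$ is not finitely generated (e.g.\ divisible like $\Q$) or $Lv$ has large algebraic content over $\Q(T)v$.
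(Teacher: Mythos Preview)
Your proposal rests on a claim that fails for valuations of rank greater than one: that a valued field is always dense in its henselization, or equivalently that ``$L|K$ immediate'' implies ``$K$ dense in $L$''. Density does imply immediacy (Lemma~\ref{denseimm}), but your iterative argument for the converse only shows that $v(a-b)$ can be \emph{strictly increased} at each step, not that it can be pushed beyond every prescribed $\alpha\in vL$. If $v(a-b)$ lies in a proper convex subgroup $\Gamma\subsetneq vL$, each iteration may stay inside $\Gamma$, and you never reach any $\alpha\notin\Gamma$. Concretely, take $K=k(s,t)$ (with $\chara k=0$) carrying a rank-two valuation $v$ in which $vt$ dominates every integer multiple of $vs>0$, and let $w$ be the coarsening with $ws=0$, $wt>0$, so that $Kw=k(s)$. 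Then $\sqrt{1+s}\in K^h$, but no $b\in K$ satisfies $v(\sqrt{1+s}-b)\geq vt$: that would give $w(\sqrt{1+s}-b)>0$ and hence $(\sqrt{1+s})w=bw\in Kw=k(s)$, whereas $(\sqrt{1+s})w$ is a genuine square root of $1+s$ in $K^hw\supseteq k(s)^h$ and does not lie in $k(s)$. So $K$ is not dense in $K^h$, even though $K^h|K$ is immediate. This is precisely why the paper's Lemma~\ref{rk1dense} carries the archimedean hypothesis.

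This gap is what forces the paper's two-case split. When $vL$ has a maximal proper convex subgroup $\Gamma$, one passes to the coarsening $v_\Gamma$, whose value group \emph{is} archimedean; there your strategy is essentially what Proposition~\ref{d1} carries out, using Proposition~\ref{excomplh} (a Zorn argument producing $L_0$ linearly disjoint from $\Q(T)^h$ with $L=L_0^h$) together with Lemma~\ref{rk1dense} to get density. When $vL$ has no maximal proper convex subgroup there is no archimedean coarsening available, your reduction collapses, and the paper (Proposition~\ref{d2}) uses an entirely different construction: a cofinal chain of coarsenings $w_\nu$ and a tower of fields of representatives for the residue fields $Lw_\nu$, assembled so that their union is dense in $L$ yet omits a designated element. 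Your sketch (matching $vL$ and $Lv$ over $\Q(T)$, or an unspecified Zorn selection) at best recovers the first case and does not address the second.
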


Here, density refers to the topology induced by the valuation; that is,
$K$ is dense in $(L,v)$ if for every $a\in L$ and all values $\alpha$ in
the value group $vL$ of $(L,v)$ there is $b\in K$ such that $v(a-b)\geq
\alpha$. In the case of non-archimedean ordered fields with natural (or
non-trivial order compatible) valuation, density in this sense is
equivalent to density with respect to the ordering.

In the case where the value group $vL$ has a maximal proper convex
subgroup, the proof is quite easy, but does in general not render any
subfield $K$ such that $L|K$ is transcendental. In the case of $vL$
having no maximal proper convex subgroup, the proof is much more
involved, but leaves us the choice between $L|K$ algebraic or
transcendental:

\begin{theorem}                             \label{MT2}
In addition to the assumptions of Theorem~\ref{MT1}, suppose that $vL$
does not have a maximal proper convex subgroup. Then for each integer
$n\geq 1$ there is a henselian (as well as a non-henselian) subfield $K$
dense in $L$ such that $\trdeg L|K= n$. It can also be chosen such that
$\trdeg L|K$ is infinite.
\end{theorem}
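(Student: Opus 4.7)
The plan is to mimic the construction behind Theorem~\ref{MT1} in the case where $vL$ has no maximal proper convex subgroup, but to arrange that $n$ prescribed pseudo-Cauchy sequences of transcendental type are left in the dense subfield $K$ we build while their pseudo-limits $z_1,\ldots,z_n\in L$ are kept out. Kaplansky's theory of pseudo-Cauchy sequences in residue characteristic zero is the key algebraic tool: it separates density (captured by the sequence terms) from transcendence (captured by the ``transcendental type'' of the sequence).

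First I would inductively produce, for $i=1,\ldots,n$, a pseudo-Cauchy sequence $(a_{i,\rho})_\rho$ in $L$ and a pseudo-limit $z_i\in L$. The hypothesis that $vL$ has no maximal proper convex subgroup says that whenever a finite amount of data has been constructed using only values from some proper convex subgroup $H$ of $vL$, there exist values lying strictly above $H$ in the archimedean ordering. This allows me to choose the $i$-th sequence to have breadth high above everything constructed previously, so that its algebraic behaviour is decoupled from earlier data; a standard Kaplansky argument then produces a sequence of transcendental type over the field generated by the prime field together with $z_1,\ldots,z_{i-1}$ and everything else chosen so far. The $z_i$ are therefore algebraically independent over any subfield of $L$ that contains the sequences $(a_{i,\rho})$ but none of the $z_i$.

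Next I would take a base field $F_0\subset L$ generated by the prime field and the terms $a_{i,\rho}$, and adapt the proof of Theorem~\ref{MT1} to build a subfield $K$ with $F_0\subseteq K\subset L$ that is dense in $L$ and such that $L$ is algebraic over $K(z_1,\ldots,z_n)$. Density follows because each $z_i$ is approximated arbitrarily well by its sequence terms (all in $K$) and because the Theorem~\ref{MT1} iteration handles the remaining points of $L$; the transcendental-type property ensures every $z_i$ remains transcendental over $K$, whence $\trdeg L|K=n$. For the henselian version I would replace $K$ by its henselization $K^h$ inside $L$: since henselization is an immediate algebraic extension it preserves both density and transcendence degree, and cannot absorb any $z_i$. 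For the non-henselian variant I would keep $K$ itself and, if necessary, perturb the construction so that $K\neq K^h$, for example by omitting from $K$ the root of some Hensel polynomial that lies in $L\setminus K$ but would otherwise force henselianity. The infinite transcendence-degree case follows by the obvious $\omega$-iteration, diagonalising so that every element of $L$ is eventually approximated.

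The main obstacle is the simultaneous control of density and transcendence: the Theorem~\ref{MT1} machinery pulls many elements of $L$ into $K$ to achieve density, and one must verify that none of these additions renders any $z_i$ algebraic over $K$. Placing the pseudo-Cauchy sequences into archimedean classes that are higher and higher -- exactly what the hypothesis on $vL$ allows -- is the device that decouples the algebraic behaviour of the $z_i$ from the density machinery, and the bulk of the actual work lies in a careful interleaving of the two constructions together with a verification that the transcendental type of each sequence is preserved under all the approximating operations performed along the way.
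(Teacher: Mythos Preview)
Your outline has the right shape --- build a dense subfield containing approximants to prescribed elements $z_1,\ldots,z_n$ while keeping the $z_i$ themselves out --- but the step you yourself flag as ``the main obstacle'' is a genuine gap, not routine bookkeeping. A pseudo-Cauchy sequence of transcendental type over a small initial field $F_0$ need not remain of transcendental type over the much larger final field $K$; once you adjoin enough elements to make $K$ dense in $L$, nothing you have said prevents a polynomial over $K$ from annihilating $z_i$. The assertion that ``the $z_i$ are algebraically independent over any subfield containing the sequences but none of the $z_i$'' is false in general. The phrase ``breadth high above everything constructed previously'' is also confused: since you want the sequence terms themselves to witness that $z_i$ is approximated arbitrarily well from $K$, the values $v(z_i-a_{i,\rho})$ must be cofinal in $vL$, so there is no breadth to place ``high.'' And invoking ``the Theorem~\ref{MT1} iteration'' is circular here, because for value groups without a maximal proper convex subgroup the proof of Theorem~\ref{MT1} \emph{is} the construction of Proposition~\ref{d2}, which is what you are trying to establish.

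The paper supplies the missing mechanism explicitly. One fixes a cofinal chain of convex subgroups $C_\nu$ ($\nu<\lambda$) with associated coarsenings $w_\nu$ and uses Lemma~\ref{for} to build an increasing chain of fields of representatives $K'_\nu$ for the residue fields $Lw_\nu$; their union $K'$ is automatically dense in $(L,v)$. To force transcendence one tracks explicit transcendence bases $T_\nu$ of the successive layers and performs a \emph{shift}: at stage $\nu$ the basis element $t_{0,0}$ is replaced by $t_{0,0}-t_{\nu+1,0}$, where $w_\nu t_{\nu+1,0}>0$. The shifted element has the same $w_\nu$-residue, so one still obtains a field of representatives (density is preserved), while an explicit transcendence-degree count at each finite stage shows $t_{0,0}$ is transcendental over every $K_\nu$, hence over $K:=\bigcup_\nu K_\nu$. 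Shifting more basis elements in parallel yields $\trdeg L|K=\lambda$. The paper also uses a clean reduction you omit: once a single dense $K$ with $\trdeg L|K=\lambda$ is in hand, any $\kappa\le\lambda$ is obtained as $K_\kappa:=K(T\setminus T_\kappa)$ for a transcendence basis $T$ of $L|K$ and $T_\kappa\subset T$ of size $\kappa$; Proposition~\ref{ffnothens} then gives non-henselianity for free whenever $T_\kappa\ne T$, and the henselization $K_\kappa^h\subset L$ gives the henselian version, with no ad hoc omission of Hensel roots required.
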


To see that such valued fields $(L,v)$ exist, take $x_i\,$, $i\in\N$, to
be a set of algebraically independent elements over an arbitrary field
$k$ and define a valuation $v$ on $k(x_i\mid i\in\N)$ by setting
$0<vx_1\ll vx_2\ll\ldots\ll vx_i\ll\ldots$; then pass to the
henselization of $(k(x_i\mid i\in\N),v)$. For a more general approach,
see Lemma~\ref{exvgcof}.

\begin{remark}
A.\ Fornasiero [F] has shown that every henselian valued field with a
residue field of characteristic $0$ admits a truncation closed embedding
in a power series field with coefficients in the residue field and
exponents in the value group (in general, the power series field has to
be endowed with a non-trivial factor system). ``Truncation closed''
means that every truncation of a power series in the image lies again in
the image.

It follows that all of the henselian dense subfields admit such
truncation closed embeddings. But also the dense non-henselian subfields
can be chosen such that they admit truncation closed embeddings. We will
sketch the proof in Section~\ref{sectds} (Remarks~\ref{remtce}
and~\ref{r2}).
\end{remark}

Our construction developed for the proof of Theorem~\ref{MT2} also gives
rise to a counterexample to a quite common erroneous application of
Hensel's Lemma. A valuation $w$ is called a \bfind{coarsening} of $v$ if
its associated valuation ring contains that of $v$. In this case, $v$
induces a valuation $\ovl{w}$ on the residue field $Kw$ whose valuation
ring is simply the image of the valuation ring of $v$ under the residue
map associated with $w$. The counterexample proves:

\begin{proposition}                           \label{cor1}
There are valued fields $(K,v)$ such that $vK$ has no maximal proper
convex subgroup, the residue field $(Kw,\ovl{w})$ is henselian for every
non-trivial coarsening $w\ne v$ of $v$, but $(K,v)$ itself is not
henselian.
\end{proposition}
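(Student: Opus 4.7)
My plan is to read Proposition \ref{cor1} directly off Theorem \ref{MT2}. I would start from any henselian valued field $(L,v)$ of residue characteristic $0$ whose value group has no maximal proper convex subgroup, e.g.\ the henselization of $(k(x_i\mid i\in\N),v)$ mentioned immediately after Theorem \ref{MT2}. Theorem \ref{MT2} then supplies a subfield $K$ of $L$ which is dense in $(L,v)$ and is non-henselian. I claim that $(K,v)$ already has every property asserted in Proposition \ref{cor1}.

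First, density forces $vK=vL$: for $a\in L$ and $\alpha\in vL$ with $\alpha>va$, density supplies $b\in K$ with $v(a-b)\geq\alpha>va$, whence $vb=va$ by the ultrametric inequality. Thus $vK$ inherits the property of having no maximal proper convex subgroup, and $(K,v)$ is non-henselian by construction. The substantive step is the residue field condition. Fix a non-trivial coarsening $w\ne v$ of $v$ on $K$, corresponding to a proper non-trivial convex subgroup $H$ of $vK=vL$, and let $w$ also denote the coarsening of $v$ on $L$ attached to $H$. Since the valuation ring of $v$ is contained in that of $w$, the $w$-topology on $L$ is coarser than the $v$-topology, so $K$ is automatically $w$-dense in $L$. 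For any $\ell$ in the $w$-valuation ring of $L$, $w$-density supplies $k\in K$ with $w(k-\ell)>0$; one checks that $k$ lies in the $w$-valuation ring of $K$ and that $kw=\ell w$, so the natural injection $Kw\hookrightarrow Lw$ is an isomorphism. The valuation $\ovl{w}$ induced by $v$ agrees on both sides, hence $(Kw,\ovl{w})=(Lw,\ovl{w})$ as valued fields.

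To finish I would invoke the standard fact that if $(L,v)$ is henselian and $v=\ovl{w}\circ w$, then the residue valued field $(Lw,\ovl{w})$ is again henselian; under the identification above this transfers to henselianity of $(Kw,\ovl{w})$. I do not anticipate a real obstacle here: the only delicate point is the interplay between $v$-density and $w$-density of $K$ inside $L$, and that is immediate once one compares the two topologies. All the substance of Proposition \ref{cor1} is already packed into Theorem \ref{MT2}, which delivers a non-henselian $K$ sitting densely inside a henselian $L$; the remaining verification is purely formal.
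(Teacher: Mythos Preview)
Your proposal is correct and follows essentially the same route as the paper: start from a henselian $(L,v)$ of residue characteristic $0$ with no maximal proper convex subgroup, invoke the existence of a dense non-henselian subfield $K$ (the paper cites Proposition~\ref{d2}, you cite Theorem~\ref{MT2}, which is the same content), then use density to get $Kw=Lw$ for every non-trivial coarsening $w$ and transfer henselianity of $(Lw,\ovl w)$ from $(L,v)$ via Lemma~\ref{henswov}. One cosmetic slip: in the paper's convention the decomposition is written $v=w\circ\ovl w$, not $v=\ovl w\circ w$.
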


The proofs of Theorems~\ref{MT1} and~\ref{MT2} and of
Proposition~\ref{cor1} are given in Section~\ref{sectds}. There, we will
also give a more explicit version of Theorem~\ref{MT2}.

\parm
In general, the quotient fields of integer parts of an ordered field are
smaller than the field. The following theorem will show that there are
real closed fields for which the quotient field of {\it every} integer
part is a proper subfield. If $k$ is any field, then
\[\PSF(k)\;:=\;\bigcup_{n\in\N} k((t^{\frac{1}{n}}))\]
is called the \bfind{Puiseux series field over $k$}; it is a subfield of
the power series field $k((t^{\Q}))$ with coefficients in $k$ and
exponents in $\Q$, which we also simply denote by $k((\Q))$.

\begin{theorem}                             \label{MT4}
Let $\Q^{\rm rc}$ denote the field of real algebraic numbers and
$\PSF(\Q^{\rm rc})$ the Puiseux series field over $\Q^{\rm rc}$. If $I$
is any integer part of this real closed field, then $\Quot I$ is a
proper countable subfield of $\PSF(\Q^{\rm rc})$ such that the
transcendence degree of $\PSF(\Q^{\rm rc})$ over $\Quot I$ is
uncountable. The same holds for the completion of $\PSF(\Q^{\rm rc})$.
\end{theorem}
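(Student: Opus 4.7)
The plan is to reduce the theorem to the single statement that every integer part $I$ of $L:=\PSF(\Q^{\rm rc})$ is countable. Once this is proved, the stated conclusions follow cheaply: $L\supseteq \Q^{\rm rc}((t))$ has cardinality $2^{\aleph_0}$, so any countable subfield $\Quot I$ is automatically proper; and if $\trdeg L|\Quot I$ were countable, then $L$ would be algebraic over a countable extension of $\Quot I$ and hence itself countable, contradicting $|L|=2^{\aleph_0}$. So the transcendence degree is forced to be uncountable.

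The bound $|I|\le\aleph_0$ rests on two structural observations. First, because $I$ is discretely ordered with $1$ as its least positive element, the only element of $I$ in the interval $(-1,1)$ is $0$; in particular $I$ contains no nonzero element of the maximal ideal $\mathfrak{m}_v$ of the natural valuation $v$ on $L$. Second, every Puiseux series $a\in L$ has support in $\frac{1}{n}\Z$ bounded below for some $n$, so its \emph{infinite part} $\tau(a):=\sum_{q<0}c_qt^q$ is a finite $\Q^{\rm rc}$-linear combination of powers $t^{-k/n}$ ($k,n\ge 1$) and hence ranges over a countable set $T$. Writing $\rho(a):=a-\tau(a)\in\mathcal{O}_v$, the map
\[
\Phi\colon I\longrightarrow T\times\Q^{\rm rc},\qquad a\longmapsto\bigl(\tau(a),\,\overline{\rho(a)}\bigr),
\]
is injective: if $\Phi(a)=\Phi(b)$, then $a-b=\rho(a)-\rho(b)\in\mathfrak{m}_v$, and since $a-b\in I$, the first observation forces $a=b$. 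Therefore $|I|\le\aleph_0\cdot\aleph_0=\aleph_0$.

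For the completion $\widehat L$ the same scheme works once every $s\in\widehat L$ is shown to have an infinite part in $T$. This is the one delicate point: given a Cauchy sequence $(a_k)\subset L$ with $v(a_k-s)\to\infty$, for all sufficiently large $k,k'$ one has $v(a_k-a_{k'})\ge 0$, so $\tau(a_k-a_{k'})=0$ and hence $\tau(a_k)=\tau(a_{k'})$; we \emph{define} $\tau(s)$ to be this eventually constant value in $T$. Then $s-\tau(s)$ lies in the valuation ring of $\widehat L$ and has residue in $\Q^{\rm rc}$ (the residue field being preserved under completion), so $\Phi$ extends verbatim to any integer part of $\widehat L$, remains injective by the same argument, and the same count yields $|I|\le\aleph_0$. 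The main obstacle in the whole plan is really just this support-control step for the completion; once one sees that the finiteness of the negative-exponent part of the support is preserved under Cauchy limits, the rest is bookkeeping.
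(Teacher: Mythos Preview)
Your proof is correct and takes a somewhat different route from the paper's. The paper first exhibits an explicit countable integer part, namely $\Q^{\rm rc}[\Q^{<0}]+\Z$ (Proposition~\ref{propPSF}), and then invokes a general lemma (Lemma~\ref{eqcard}\,b) stating that any two integer parts of an ordered field are order-isomorphic and hence equipotent; this forces every integer part to be countable. You instead bound $|I|$ directly by injecting $I$ into $T\times\Q^{\rm rc}$, where your set $T$ of ``infinite parts'' is exactly the underlying set of $\Q^{\rm rc}[\Q^{<0}]$. So the two arguments share the same combinatorial core---the finiteness of the negative-exponent part of the support of a Puiseux series, preserved under completion---but package it differently: the paper compares $I$ to a known integer part via an order-isomorphism, while you map $I$ into a countable target via an explicit injection using $I\cap\mathfrak{m}_v=\{0\}$. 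Your route is more self-contained (it needs neither the construction of a specific integer part nor the order-isomorphism lemma); the paper's route yields the additional information that all integer parts are order-isomorphic and situates the result in the broader setting of additive complements of the valuation ring.
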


This answers a question of M.~Moniri. An answer was also given,
independently, by L.~van den Dries at the conference. A larger variety
of such fields is presented in Section~\ref{sectabsir}. On the other
hand, there are fields that admit integer parts whose quotient field is
the whole field:

\begin{theorem}                             \label{MT5}
Let $\lambda$ be any cardinal number and $k$ any field of characteristic
$0$. Then there exists a henselian valued field $(L,v)$ with residue
field $k$ which has the following properties:
\sn
a) \ $L$ contains a $k$-algebra $R$ which is an additive complement of
its valuation ring such that $\Quot R=L$.\n
b) \ At the same time, for each non-zero cardinal number $\kappa \leq
\lambda$, $L$ contains a $k$-algebra $R_\kappa$ which is an additive
complement of its valuation ring such that $\trdeg L|\Quot
R_\kappa=\kappa$.

If in addition $k$ is an archimedean ordered field and $<$ is any
ordering on $L$ compatible with $v$ (see Section~\ref{sectprel} for this
notion), then $(L,<)$ admits an integer part $I$ such that $\Quot I=L$.
At the same time, for each non-zero cardinal number $\kappa \leq
\lambda$, $L$ admits an integer part $I_\kappa$ such that $\trdeg
L|\Quot I_n=\kappa$.
\end{theorem}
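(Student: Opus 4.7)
My plan is to realize $L$ as the henselization of a rational function field in many variables, truncation--closed inside a Hahn series field over a divisible ordered abelian group with cofinally many Archimedean classes, and to take $R$ (and the $R_\kappa$) as its negative--support $k$--subalgebras. Fix a linearly ordered index set $\Lambda$ of cardinality $\max(\lambda,\aleph_0)$ containing an $\aleph_0$--cofinal tail (e.g.\ $\Lambda=\lambda+\omega$) and let $G:=\bigoplus_{i\in\Lambda}\Q\cdot e_i$ be lexicographically ordered so that $e_i\ll e_j$ archimedeanly whenever $i<j$. Working inside the Hahn field $k((G))$, set
\[
L\;:=\;k(t_i:i\in\Lambda)^{h}\;\subseteq\;k((G)),\qquad v(t_i)=e_i.
\]
Then $(L,v)$ is henselian with residue field $k$, and by the stability of truncation--closedness under henselization (the mechanism behind Fornasiero's theorem referenced in the introduction), $L$ sits truncation--closed in $k((G))$. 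When $k$ is archimedean ordered, any $v$--compatible ordering on $L$ agrees with the canonical ``positive iff leading coefficient positive'' ordering on $k((G))$ upon restriction.

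For part (a), define $R:=\{f\in L:\mathrm{supp}(f)\subseteq G^{<0}\}$. Since $G^{<0}+G^{<0}\subseteq G^{<0}$, $R$ is closed under multiplication and so is a (non--unital) $k$--subalgebra of $L$; truncation closure of $L$ then yields the direct--sum decomposition $L=\mathcal{O}_L\oplus R$ of abelian groups, so $R$ is an additive complement of $\mathcal{O}_L$. To prove $\Quot R=L$: any $f\in L$ already lies in $L_J:=k(t_i:i\in J)^h$ for some finite $J\subseteq\Lambda$, so $\mathrm{supp}(f)\subseteq\bigoplus_{i\in J}\Z e_i$. Picking $i^*\in\Lambda\setminus J$ in the cofinal tail, $e_{i^*}$ archimedeanly dominates $\mathrm{supp}(f)$, whence $t_{i^*}^{-1}\in R$ and $t_{i^*}^{-1}f\in R$; therefore $f=(t_{i^*}^{-1}f)/t_{i^*}^{-1}\in\Quot R$.

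For part (b), fix nonzero $\kappa\le\lambda$ and pick $\Lambda_\kappa\subseteq\Lambda$ disjoint from the cofinal tail with $|\Lambda_\kappa|=\kappa$; set $\Lambda':=\Lambda\setminus\Lambda_\kappa$ (still containing the cofinal tail), $L_\kappa:=k(t_i:i\in\Lambda')^h$, and $R_\kappa:=R\cap L_\kappa$. The same domination argument inside $L_\kappa$ gives $\Quot R_\kappa=L_\kappa$, while $\{t_i:i\in\Lambda_\kappa\}$ is a transcendence basis of $L$ over $L_\kappa$, so $\trdeg L\,|\,\Quot R_\kappa=\kappa$. When $k$ is archimedean ordered with integer part $I_k$, set $I:=I_k+R$: this is a subring of $L$ containing $1$, and since every nonzero $r\in R$ has $v(r)<0$ and is therefore infinite in any $v$--compatible ordering, $I$ is discretely ordered with $1$ the least positive element. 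Writing $x=x^{<0}+x^0+x^{>0}$ (with $x^0\in k$, $x^{>0}\in\mathfrak{m}_L$), $\lfloor x\rfloor=x^{<0}+\lfloor x^0\rfloor_k$ or $x^{<0}+\lfloor x^0\rfloor_k-1$ depending on the sign of $x^{>0}$, always landing in $I$; so $I$ is an integer part, $\Quot I\supseteq\Quot R=L$ forces $\Quot I=L$, and $I_\kappa:=I_k+R_\kappa$ analogously gives $\Quot I_\kappa=L_\kappa$.

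The main obstacle is arranging $\Quot R=L$ together with henselianity of $L$; this is made possible by the cofinally many Archimedean classes in $G$, which force the support of every $f\in L$ to stay inside a proper convex subgroup that some higher--Archimedean--class monomial $t_{i^*}^{-1}\in R$ can dominate. The other delicate ingredient, truncation--closedness of $L$ in $k((G))$ --- indispensable for $R$ to additively complement $\mathcal{O}_L$ --- rests on the stability of truncation closure under henselization.
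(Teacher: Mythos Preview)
Your argument for part~(a) is correct and gives a clean alternative to the paper's construction. The paper takes $L=\bigcup_{\nu<\lambda}k((C_\nu))$, a union of full power series fields over an increasing chain of convex subgroups, with $R=\bigcup_\nu\mathrm{Neg}\,k((C_\nu))$; you instead realize $L$ as the henselization of a rational function field inside a Hahn field and rely on truncation closedness to split off the negative-support part. Your proof that $\Quot R=L$ via a dominating monomial $t_{i^*}^{-1}$ is exactly the mechanism the paper uses (there: pick $\alpha\in C_{\nu+1}\setminus C_\nu$). One small slip: you need $i^*>\max J$, not merely $i^*\in\Lambda\setminus J$, to ensure that $e_{i^*}$ archimedeanly dominates $\mathrm{supp}(f)$; since $\Lambda$ has no last element this is harmless.

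Part~(b), however, has a genuine gap: your $R_\kappa=R\cap L_\kappa$ is an additive complement of the valuation ring \emph{of $L_\kappa$}, not of $L$. For any $i\in\Lambda_\kappa$ the element $t_i^{-1}\in L$ has value $-e_i<0$, whereas every $r\in R_\kappa\subseteq L_\kappa$ has value in $vL_\kappa=\bigoplus_{j\in\Lambda'}\Z e_j$, which does not contain $-e_i$; hence no $r\in R_\kappa$ satisfies $v(t_i^{-1}-r)\ge 0$, and $R_\kappa+{\cal O}_L\ne L$. The same defect hits $I_\kappa$. The root cause is that deleting generators shrinks the value group, so $L_\kappa$ cannot be dense in $L$. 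The paper's route for (b) is substantially more delicate: it first builds a \emph{dense} subfield $K\subseteq L$ with $\trdeg L|K=\kappa$ via the ``shifting'' trick from the proof of Proposition~\ref{d2} (replace basis elements $t_{\nu,\mu}$ by $t_{\nu,\mu}-t_{\nu+1,\mu}$, so that the omitted elements become limits of Cauchy sequences already in $K$ while the value group is preserved). It then exploits that in its model both $K_\nu$ and $k((C_\nu))$ are fields of representatives for $Lw_\nu$, yielding a valuation-preserving isomorphism $\iota\colon L\to K$, and sets $R_\kappa:=\iota(R)$; density of $K$ in $L$ (Lemma~\ref{densacip}) then makes $R_\kappa$ a complement of ${\cal O}_L$. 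Neither the dense-subfield step nor the isomorphism step is available in your framework as written; note too that your $\Lambda=\lambda+\omega$ has cofinality $\aleph_0$, so even invoking Proposition~\ref{d2} directly would only supply dense subfields of $L$ with $\trdeg\le\aleph_0$.
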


\pars
S.~Boughattas [Bg] has given an example of an ordered (and ``$n$-real
closed'') field which does not admit any integer part. In the last
section of our paper, we generalize the approach and consider a notion
that comprises integer parts as well as subrings which are additive
complements of the valuation ring or of the valuation ideal in a valued
field. A subring $R$ of a valued field $(K,v)$ will be called a
\bfind{weak complement} (in $K$) if it has the following properties:
\sn
$\bullet$ \ $vr\leq 0$ for all $r\in R$,\n
$\bullet$ \ for all $a\in K$ there is $r\in R$ such that $v(a-r)\geq 0$.
\sn
Every integer part in a non-archimedean ordered field $K$ is a weak
complement with respect to the natural valuation of $K$ (see
Lemma~\ref{ipwc}).

Using a somewhat surprising little observation (Lemma~\ref{embdsrf})
together with a result of [K1] (which is a generalization of a result in
[M--S]) we construct examples for valued fields that do not admit any
weak complements. From this we obtain ordered fields without integer
parts. In particular, we show:

\begin{theorem}                             \label{MT3}
For every prime field $k$ there are valued rational function fields
$k(t,x,y)$ of transcendence degree $3$ over the trivially valued
subfield $k$ which do not admit any weak complements. There are ordered
rational function fields of transcendence degree $3$ over $\Q$ which do
not admit any integer parts.

There are valued rational function fields of transcendence degree
$4$ over a trivially valued prime field which do not admit any weak
complements, but admit an embedding of their residue field and a
cross-section. There are ordered rational function fields of
transcendence degree $4$ over $\Q$ which do not admit any integer parts,
but admit an embedding of their residue field and a cross-section for
their natural valuation.
\end{theorem}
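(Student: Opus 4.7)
The strategy is to leverage Lemma~\ref{embdsrf} (a structural observation about weak complements) together with the result from [K1] (an obstruction to embedding the residue field into certain rational function fields) to produce the desired counterexamples. I expect Lemma~\ref{embdsrf} to say that any weak complement $R\subseteq(K,v)$ automatically yields an embedding of the residue field $Kv$ into $K$ lifting the residue map: namely, $R\cap\mathcal{O}_v$ consists of elements of value exactly $0$ (together with $0$), because $vr\leq 0$ for every $r\in R$; the residue map is then injective on $R\cap\mathcal{O}_v$ (no element of $R$ has strictly positive value) and, by the defining property of a weak complement applied to each $a\in\mathcal{O}_v$, surjective onto $Kv$. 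Hence $R\cap\mathcal{O}_v$ is a subfield of $K$ mapping isomorphically onto $Kv$.

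For the transcendence degree $3$ statement, I would equip $k(t,x,y)$ with a valuation $v$ of value group $\Z$, generated by $vt>0$, arranged so that $\bar x$ and $\bar y$ are algebraically independent over $k$ and the residue field is $k(\bar x,\bar y)$, and so that the obstruction from [K1] applies: no lifting of $k(\bar x,\bar y)$ into $K$ compatible with the residue map exists. By the previous paragraph, this rules out any weak complement. For the ordered variant I would take $k=\Q$ and fix a $v$-compatible ordering on $K$ (available once the residue field is formally real, which I would arrange by identifying $\bar x,\bar y$ with algebraically independent real transcendentals). Lemma~\ref{ipwc} then upgrades the absence of a weak complement to the absence of an integer part.

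The transcendence degree $4$ statements form the subtler part, since a cross-section already exists on $k(t,x,y)$ (send $n\mapsto t^n$) but the residue field embedding is the feature genuinely obstructed there. The plan is to adjoin one further transcendental $s$ and enlarge $v$ so that the resulting field $k(t,x,y,s)$ admits both a cross-section and a residue field embedding \emph{in isolation}, while the [K1]-obstruction transplanted from the subconstruction still prevents the existence of a single subring packaging them together as a weak complement. A natural way to do this is to arrange that $s$ provides a new lift of one residue to make an honest residue field embedding available, but in such a manner that no ring generated by a residue field lift together with cross-section values can realize representatives for every coset of $\mathcal{O}_v$ in $K$.

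The main obstacle I anticipate is calibrating this transcendence degree $4$ construction: one must guarantee simultaneously (i) the manifest presence of a cross-section and a residue field lift, and (ii) the persistence of the [K1]-obstruction to any ring-theoretically coherent combination of the two, so that Lemma~\ref{embdsrf} (or a refinement of it combining residue field and value group data) still prevents a weak complement. The remaining steps, including passage to the ordered analogue via a $v$-compatible ordering and an appeal to Lemma~\ref{ipwc}, are routine once the valued field construction is in place.
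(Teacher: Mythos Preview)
Your guess about Lemma~\ref{embdsrf} is off, and this derails the plan. The actual lemma concerns valuations of rank $\geq 2$: given a weak complement $R$ and a non-trivial convex subgroup $\Gamma\subsetneq vK$, the quotient field $K_\Gamma$ of $R_\Gamma:=\{r\in R: vr\in\Gamma\cup\{\infty\}\}$ satisfies $vK_\Gamma=\Gamma$, and its image $K_\Gamma v_\Gamma$ is \emph{dense} in the intermediate valued residue field $(Kv_\Gamma,\ovl{v}_\Gamma)$. Your own observation that $R\cap\mathcal{O}_v$ is a field of representatives for $Kv$ is correct, but it is not Lemma~\ref{embdsrf}, and in the paper's construction it yields no contradiction because there $Kv$ is the prime field $k$ itself. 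You have also inverted the role of [K1]: it is invoked \emph{constructively}, to manufacture on $k(t)(x,y)$ a valuation $w$ trivial on $k(t)$ with $wK=\Z$ and with residue field equal to a prescribed countably generated separable-algebraic extension $k_1$ of $k(t)$; it supplies no obstruction to lifting residue fields. In particular your rank-$1$ proposal with $Kv=k(\bar x,\bar y)$ fails outright, since $k(x,y)\subset K$ already is a lift of that residue field.

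The paper's mechanism is this. For degree $3$, set $v=w\circ v_t$ (rank $2$), choosing $k_1$ so that $k_1v_t=k$ while $v_tk_1/v_tk(t)$ is infinite. If a weak complement existed, Lemma~\ref{embdsrf} (with $\Gamma$ chosen so that $v_\Gamma=w$) would make $K_\Gamma w$ dense in $(k_1,v_t)$, hence of the same $v_t$-value group; then $K_\Gamma$ is algebraic over some $k(t')$ and the extension is forced to be infinite. Completing $\{t'\}$ to a transcendence basis $\{t',x',y'\}$ of $K|k$, one gets the infinite algebraic extension $K_\Gamma(x',y')|k(t',x',y')$ trapped inside the finite extension $K|k(t',x',y')$, a contradiction. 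For degree $4$ the paper iterates: take $k=k_0(z)$ with $k_0$ prime, set $v'=w\circ v_t\circ v_z$ (rank $3$), and arrange $k_1v_t=k_0(z)^h$. Then $v'K\simeq\Z\times\Z\times\Z$ gives an obvious cross-section and $Kv'=k_0$ embeds trivially, while the same density-plus-transcendence-degree argument (now driven by the infiniteness of $k_0(z)^h|k_0(z)$) rules out any weak complement. The ordered versions follow by lifting an ordering through $v$ (Lemma~\ref{liftord}) and applying Lemma~\ref{ipwc}. The idea you are missing is to work in higher rank and exploit an \emph{intermediate} residue field that is infinite-algebraic over a rational subfield; density transports this infiniteness back into $K$, where it collides with the finite transcendence degree.
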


Our example of an $n$-real closed field without integer parts is the
$n$-real closure of such an ordered rational function field. It is quite
similar to the example given by Boughattas, but in contrast to his
example, ours is of finite transcendence degree over $\Q$.

\sn
{\bf Open Problem:} \
Are there valued fields of transcendence degree $\leq 2$ over a
trivially valued ground field that do not admit any weak complements?
Are there ordered fields of transcendence degree $\leq 2$ over an
archimedean ordered field that do not admit any integer parts? Are there
examples of transcendence degree $\leq 3$ with embedding of their
residue field and cross-section?

%
%
\section{Some preliminaries}                \label{sectprel}
For basic facts from general valuation theory we refer the reader to
[E], [R], [W], [Z--S], [K2]. For ramification theory, see [N], [E] and
[K2]. In the following, we state some well known facts without proofs.

\parm
Take any valued field $(K,v)$. If $v'$ is a valuation on the residue
field $Kv$, then $v\circ v'$ will denote the valuation whose valuation
ring is the subring of the valuation ring of $v$ consisting of all
elements whose $v$-residue lies in the valuation ring of $v'$. (Note
that we identify equivalent valuations.) While $v\circ v'$ does actually
not mean the composition of $v$ and $v'$ as mappings, this notation is
used because in fact, up to equivalence the place associated with
$v\circ v'$ is indeed the composition of the places associated with $v$
and $v'$.

Every convex subgroup $\Gamma$ of $vK$ gives rise to a coarsening
$v_\Gamma$ of $v$ such that $v_\Gamma K$ is isomorphic to $vK/\Gamma$.
As mentioned in the introduction, $v$ induces a valuation
$\ovl{v}_\Gamma$ on the residue field $Kv_\Gamma\,$. We then have that
$v=v_\Gamma\circ\ovl{v}_\Gamma\,$. The value group $\ovl{v}_\Gamma
(Kv_\Gamma)$ of $\ovl{v}_\Gamma$ is isomorphic to $\Gamma$, and its
residue field $(Kv_\Gamma) \ovl{v}_\Gamma$ is isomorphic to $Kv$. Every
coarsening $w$ of $v$ is of the form $v_\Gamma$ for some convex subgroup
$\Gamma$ of $vK$.

If $a$ is an element of the valuation ring ${\cal O}_v$ of $v$ on $K$,
then $av$ will denote the image of $a$ under the residue map associated
with the valuation $v$. This map is a ring homomorphism from ${\cal
O}_v$ onto the residue field $Kv$. It is only unique up to equivalence,
i.e., up to composition with an isomorphism from $Kv$ to another field
(and so the residue field $Kv$ is only unique up to isomorphism).
If $w$ is a coarsening of $v$, that is, ${\cal O}_v$ contains the
valuation ring ${\cal O}_w$ of $v$ on $L$, then the residue map
${\cal O}_w\ni a\mapsto aw\in Kw$ can be chosen such that it extends the
residue map ${\cal O}_v\ni a\mapsto av\in Kv$.

\parm
An ordering $<$ on a valued field $(K,v)$ is said to be
\bfind{compatible with the valuation} $v$ (and $v$ is
\bfind{compatible with } $<$) if
\begin{equation}                            \label{v<comp}
\forall x,y\in K:\;\; 0<x\leq y\,\Rightarrow\,vx\geq vy\;.
\end{equation}
This holds if and only if the valuation ring of $v$ is a convex subset
of $(K,<)$. This in turn holds if and only if $<$ induces an ordering on
the residue field $Kv$. We will need the following well-known facts
(cf.\ [P]):
\begin{lemma}                               \label{liftord}
Take any valued field $(K,v)$. Every ordering $<_r$ on $Kv$ can be
lifted to an ordering $<$ on $K$ which is compatible with $v$ and
induces $<_r$ on $Kv$ (that is, if $a,b$ are elements of the valuation
ring of $v$ such that $a<b$, then $av=bv$ or $av<_r bv$).
\end{lemma}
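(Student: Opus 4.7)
The plan is to construct the ordering via the Artin--Schreier machinery of preorderings, using the given ordering $<_r$ on $Kv$ to carve out enough ``positive'' elements so that $-1$ is excluded. Concretely, I set
\[
T_0 \;=\; \Bigl\{\sum_{i=1}^n c_i a_i^2 \;:\; n\in\N,\; a_i\in K^*,\; c_i\in {\cal O}_v^\times,\; c_iv >_r 0\Bigr\}\,.
\]
One checks easily that $T_0$ is a preordering of $K$: it contains every square (take $c_i=1$, whose residue $1$ is $<_r$-positive), it is closed under addition by construction, and it is closed under multiplication since the product of two $<_r$-positive units of ${\cal O}_v$ is again a $<_r$-positive unit. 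Since $Kv$ carries an ordering, $Kv$ and hence $K$ has characteristic $0$, so provided $-1\notin T_0$ the usual Zorn's lemma argument extends $T_0$ to an ordering $<$ on $K$.

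The main technical step is thus to verify that $-1 \notin T_0$, and this is where the valuation does the essential work. Assume a hypothetical relation $-1=\sum_{i=1}^n c_i a_i^2$ with all $a_i\ne 0$, and set $\gamma = 2\min_i va_i$. Since $v(\sum c_ia_i^2)\geq\gamma$ while $v(-1)=0$, we must have $\gamma\leq 0$. If $\gamma=0$, reducing modulo the valuation ideal gives $-1 = \sum_{va_i=0}(c_iv)(a_iv)^2$ in $Kv$ with the right-hand side a nonempty sum of $<_r$-positive products, contradicting $-1<_r 0$. If $\gamma<0$, pick $a_0$ with $va_0=\gamma/2$, divide the relation by $a_0^2$, observe that $-a_0^{-2}$ has strictly positive $v$-value (hence zero residue), and note that only the indices with $va_i=\gamma/2$ survive in the residue on the right-hand side; these surviving contributions are again $<_r$-positive products and do not cancel, again a contradiction.

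It remains to verify that the resulting ordering $<$ satisfies the two conclusions of the lemma. For the lifting property, if $a,b\in{\cal O}_v$ satisfy $av>_r bv$, then $c=a-b$ is a unit of ${\cal O}_v$ with $cv>_r 0$, hence $c=c\cdot 1^2\in T_0$, so $a>b$; the contrapositive is exactly the statement of the lemma. For compatibility with $v$, observe that for any $a$ with $va>0$ the elements $1\pm a$ are units with residue $1>_r 0$, hence lie in $T_0$, so $|a|<1$; the standard rescaling trick applied to $y/x$ then shows that $0<x\leq y$ forces $vx\geq vy$, i.e., the valuation ring is convex in $(K,<)$. The delicate step is the residue-level cancellation argument of the previous paragraph; everything else is essentially formal.
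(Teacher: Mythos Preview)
Your argument is correct. The preordering $T_0$ is well defined, the two-case residue computation showing $-1\notin T_0$ is sound (in the case $\gamma<0$ you correctly take $a_0$ to be one of the $a_i$ of minimal value, so $\gamma/2\in vK$ automatically), and the verification of compatibility and of the lifting property from $T_0\subseteq P$ is clean.

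As for comparison with the paper: the paper does not give a proof of this lemma at all. It is listed among ``well known facts'' with a reference to Prestel's \emph{Lectures on Formally Real Fields} [P]. Your argument is essentially the standard Baer--Krull/Artin--Schreier construction one finds there (build a preordering from the pullback of the residue positive cone together with squares, show it is proper using the $2$-henselian-like behaviour of residues of weighted sums of squares, then extend by Zorn), so you have supplied exactly the proof the citation points to.
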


\begin{lemma}                               \label{extcv}
If an ordering of a field $K$ is compatible with the valuation $v$ of
$K$, then $v$ extends to a valuation of the real closure $K^{\rm rc}$ of
$(K,<)$, which is still compatible with the ordering on $K^{\rm rc}$.
This extension is henselian, its value group $vK^{\rm rc}$ is the
divisible hull of $vK$, and its residue field $K^{\rm rc}v$ is the real
closure of $Kv$ (with respect to the induced ordering on $Kv$).
\end{lemma}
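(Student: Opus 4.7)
My plan is to construct the extension by a convex-hull argument and then read off all four conclusions from real closedness of $K^{\rm rc}$, essentially using only the intermediate value theorem. Concretely, I would let $\mathcal{O}_v$ denote the valuation ring of $v$, which is convex in $(K,<)$ by compatibility, and take $\mathcal{O}'$ to be its convex hull inside $K^{\rm rc}$. This is a subring containing $\Z$, it is convex, and any convex subring of an ordered field that contains $1$ is automatically a valuation ring: for nonzero $a$ either $|a|\le 1$ (so $a\in\mathcal{O}'$) or $|a^{-1}|<1$ (so $a^{-1}\in\mathcal{O}'$). Let $v'$ be the associated valuation. Because $\mathcal{O}_v$ is already convex in $K$, one has $\mathcal{O}'\cap K=\mathcal{O}_v$, so $v'$ extends $v$; and the convexity of $\mathcal{O}'$ in $K^{\rm rc}$ is exactly the compatibility of $v'$ with the ordering of $K^{\rm rc}$.

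The value group and residue field would then follow. Since $K^{\rm rc}$ is real closed, every positive element has $n$-th roots for every $n$, so $v'K^{\rm rc}$ is divisible; and as $K^{\rm rc}|K$ is algebraic, $v'K^{\rm rc}$ is a torsion extension of $vK$. Together these force $v'K^{\rm rc}$ to equal the divisible hull of $vK$. The ordering on $K^{\rm rc}$ descends to an ordering on $K^{\rm rc}v'$ extending the one induced on $Kv$ by $<$, and $K^{\rm rc}v'|Kv$ is algebraic. A short lift-and-reduce argument then shows $K^{\rm rc}v'$ is real closed: positive residues lift to positive units of $\mathcal{O}'$ whose positive square roots in $K^{\rm rc}$ reduce to square roots in $K^{\rm rc}v'$; any monic polynomial of odd degree over $K^{\rm rc}v'$ lifts to $\mathcal{O}'[X]$ and has a root in $K^{\rm rc}$ lying in $\mathcal{O}'$ by the Cauchy bound on the size of roots. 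Thus $K^{\rm rc}v'$ is an algebraic real closed extension of $Kv$ carrying the induced ordering, hence is the real closure of $(Kv,<)$.

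The one genuinely delicate step is henselianity. My plan is a Taylor-expansion plus intermediate-value-theorem argument: given $f\in\mathcal{O}'[X]$ and $a\in\mathcal{O}'$ with $v'f(a)>0$ and $v'f'(a)=0$, expand $f(a+\varepsilon)=f(a)+\varepsilon f'(a)+\varepsilon^{2}g(\varepsilon)$ with $g\in\mathcal{O}'[\varepsilon]$ (using that $K$ has characteristic zero, as it is ordered), then use the already-established divisibility of $v'K^{\rm rc}$ to pick $t\in K^{\rm rc}$ with $0<v't<v'f(a)$. The middle term $\pm t f'(a)$ then strictly dominates both $f(a)$ and $t^{2}g(\pm t)$ in valuation, so $f(a+t)$ and $f(a-t)$ have opposite signs in $K^{\rm rc}$, and the intermediate value theorem produces a root $b\in(a-t,a+t)$, giving $v'(b-a)\ge v't>0$ as required. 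The hard part will be making this sign comparison rigorous in purely valuation-theoretic language; by contrast, the convexity, value-group, and residue-field steps are essentially formal consequences of real closedness together with the torsion property of algebraic extensions of value groups.
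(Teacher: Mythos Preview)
Your argument is correct and self-contained. Note, however, that the paper does not actually supply a proof of this lemma: it is listed among the ``well known facts'' in the preliminaries section, with a reference to Prestel's \emph{Lectures on Formally Real Fields} [P], and is stated without proof. So there is no proof in the paper to compare against. Your convex-hull construction of the extended valuation ring, together with the intermediate-value-theorem verification of Hensel's Lemma, is the standard approach and is essentially what one finds in the cited reference.

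Two minor remarks on streamlining. First, the Taylor expansion $f(a+\varepsilon)=f(a)+\varepsilon f'(a)+\varepsilon^{2}g(\varepsilon)$ with $g\in\mathcal{O}'[\varepsilon]$ holds over any commutative ring, since the coefficients are binomial rather than inverse-factorial; the appeal to characteristic zero is harmless but unnecessary at that point. Second, in the odd-degree step for the residue field you can bypass the Cauchy-bound argument entirely: $\mathcal{O}'$, being a valuation ring, is integrally closed in $K^{\rm rc}$, so any root in $K^{\rm rc}$ of a monic polynomial over $\mathcal{O}'$ already lies in $\mathcal{O}'$.
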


A compatible valuation of an ordered field $(K,<)$ is called the
\bfind{natural valuation} of $(K,<)$ if its residue field is archimedean
ordered. The natural valuation is uniquely determined, and every
compatible valuation is a coarsening of the natural valuation.

\parm
Take any valued field $(K,v)$ and a finite extension $L|K$. Then the
following {\bf fundamental inequality} holds:
\begin{equation}                            \label{fundineq}
n\;\geq\;\sum_{i=1}^{\rm g} {\rm e}_i {\rm f}_i\;,
\end{equation}
where $n=[L:K]$ is the degree of the extension, $v_1,\ldots,v_{\rm g}$
are the distinct extensions of $v$ from $K$ to $L$, ${\rm e}_i=
(v_iL:vK)$ are the respective ramification indices, and ${\rm f}_i=
[Lv_i:Kv]$ are the respective inertia degrees. Note that the
extension of $v$ from $K$ to $L$ is unique (i.e., ${\rm g}=1$) if and
only if $(K,v)$ is henselian (which by definition means that $(K,v)$
satisfies Hensel's Lemma). The following are easy consequences:

\begin{lemma}                               \label{degvr}
If $L|K$ is a finite extension and $v$ is a valuation on $L$, then
$[L:K]\geq (vL:vK)$ and $[L:K]\geq [Lv:Kv]$.
\end{lemma}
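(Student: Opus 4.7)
The plan is to read both inequalities directly off the fundamental inequality~(\ref{fundineq}) just stated above. The valuation $v$ on $L$ restricts to a valuation $v|_K$ on $K$, of which $v$ is one of the finitely many extensions $v_1,\ldots,v_{\rm g}$ to $L$ appearing in~(\ref{fundineq}). Relabelling so that $v=v_1$, we have ${\rm e}_1=(vL:vK)$ and ${\rm f}_1=[Lv:Kv]$. Because $vK\subseteq v_iL$ and $Kv\subseteq Lv_i$ for each $i$, every ${\rm e}_i$ and ${\rm f}_i$ is at least $1$, and in particular every summand ${\rm e}_i{\rm f}_i\geq 1$. Discarding all terms with $i\neq 1$ in~(\ref{fundineq}) therefore yields
$$[L:K]\;\geq\;{\rm e}_1{\rm f}_1\;=\;(vL:vK)\cdot[Lv:Kv],$$
and since each of the two factors on the right is at least $1$, both claimed inequalities follow immediately.

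If one prefers an argument independent of the full fundamental inequality, each inequality can be obtained directly by a standard linear-independence argument. For the first, any $r$ elements of $L^\times$ whose values lie in pairwise distinct cosets modulo $vK$ are $K$-linearly independent: in a hypothetical non-trivial $K$-linear relation, the values of the non-zero terms would all be distinct, forcing the unique minimum to coincide with $v0=\infty$, a contradiction. For the second, any $s$ elements of the valuation ring of $(L,v)$ whose residues in $Lv$ are $Kv$-linearly independent are $K$-linearly independent: rescale a hypothetical $K$-linear relation so that its coefficients lie in the valuation ring of $K$ with at least one having residue $\ne 0$, then reduce modulo the maximal ideal to obtain a non-trivial $Kv$-linear relation among the residues. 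The only point to verify is the trivial identification of $v$ with one of the $v_i$, so there is no real obstacle: this is a short routine corollary of the fundamental inequality.
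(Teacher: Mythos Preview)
Your argument is correct and matches the paper's intent: the paper introduces this lemma with ``The following are easy consequences'' immediately after the fundamental inequality~(\ref{fundineq}), so your first paragraph is exactly the derivation the paper has in mind. The supplementary linear-independence argument you add is also correct, though unnecessary here.
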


\begin{corollary}                           \label{aaa}
Let $L|K$ be an algebraic extension and $v$ a valuation on $L$. Then
$vL/vK$ is a torsion group and the extension $Lv|Kv$ of residue fields
is algebraic. If $v$ is trivial on $K$ (i.e., $vK=\{0\}$), then $v$
is trivial on $L$.
\end{corollary}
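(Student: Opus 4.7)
The plan is to derive all three claims by reducing to the finite subextensions $K(\alpha) \mid K$ for $\alpha \in L$ and applying Lemma~\ref{degvr}.

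First I would prove that $vL/vK$ is torsion. Given any $\alpha \in L$, the subextension $K(\alpha)\mid K$ is finite of some degree $n$, so by Lemma~\ref{degvr} we have $(vK(\alpha):vK)\leq n<\infty$. Hence the coset of $v\alpha$ in $vK(\alpha)/vK$ has finite order, which gives $m\cdot v\alpha \in vK$ for some positive integer $m$. Since every element of $vL$ is of the form $v\alpha$ for some $\alpha \in L^\times$, this shows every element of $vL/vK$ has finite order.

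Next I would show $Lv\mid Kv$ is algebraic. Pick any $\bar\beta\in Lv$ and lift it to some $\beta$ in the valuation ring of $v$ on $L$, so $\beta v=\bar\beta$. Again $K(\beta)\mid K$ is finite of some degree $n$, so $[K(\beta)v:Kv]\leq n$ by Lemma~\ref{degvr}. Thus $\bar\beta \in K(\beta)v$ is algebraic over $Kv$ of degree at most $n$, and since $\bar\beta$ was arbitrary, $Lv\mid Kv$ is algebraic.

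Finally, if $vK=\{0\}$, the torsion statement yields that $vL$ is itself a torsion group. But $vL$ is an ordered abelian group, hence torsion-free, so $vL=\{0\}$ and $v$ is trivial on $L$. No step here is really a genuine obstacle; the only subtle point is remembering that value groups are torsion-free, which is what lets the torsion quotient argument collapse to triviality in the last part.
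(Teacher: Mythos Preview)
Your proof is correct and follows exactly the approach the paper has in mind: the corollary is stated as an easy consequence of Lemma~\ref{degvr}, and your reduction to the finite subextensions $K(\alpha)\mid K$ together with the torsion-freeness of ordered abelian groups is the standard way to unpack that consequence.
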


An extension $(K,v)\subseteq (L,v)$ of valued fields is called
\bfind{immediate} if the canonical embeddings of $vK$ in $vL$ and of
$Kv$ in $Lv$ are onto. We have:
\begin{lemma}                               \label{denseimm}
If $K$ is dense in $(L,v)$, then $(K,v)\subseteq (L,v)$
is an immediate extension.
\end{lemma}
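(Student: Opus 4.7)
The plan is to verify the two conditions for an extension to be immediate, namely $vK=vL$ and $Kv=Lv$. Since $K\subseteq L$ gives the canonical inclusions $vK\hookrightarrow vL$ and $Kv\hookrightarrow Lv$ automatically, the task is to prove surjectivity in each case. I would handle the two assertions separately, and in each one the key tool is the ultrametric inequality applied to a good approximation $b\in K$ provided by the density hypothesis. (If $v$ is trivial on $L$ there is nothing to show; so I may assume that $vL$ contains strictly positive elements.)

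For the value groups: I would pick any $a\in L^\times$, choose $\gamma\in vL$ with $\gamma>0$, and set $\alpha:=va+\gamma>va$. By density I obtain $b\in K$ with $v(a-b)\ge\alpha>va$. The strict inequality $v(a-b)>va$ forces $vb=va$ by the standard ultrametric argument (if two values differ then the value of the sum equals the minimum), whence $va=vb\in vK$.

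For the residue fields: I would take any $c\in L$ with $vc\ge 0$, pick any $\alpha\in vL$ with $\alpha>0$, and apply density to find $b\in K$ with $v(c-b)\ge\alpha>0$. Then $vb\ge\min\{vc,v(c-b)\}\ge 0$, so $b$ lies in the valuation ring of $K$ and $bv\in Kv$. Since $v(c-b)>0$, the element $c-b$ lies in the maximal ideal, hence $cv=bv$ in $Lv$. Thus $cv\in Kv$, giving $Lv\subseteq Kv$.

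There is no real obstacle here; the whole argument is essentially an exercise in the ultrametric triangle inequality. The only minor point to watch is that one needs some strictly positive $\alpha\in vL$ to feed into the density hypothesis, which is why the trivially valued case deserves a brief comment at the start.
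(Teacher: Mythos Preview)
Your proof is correct and follows exactly the same route as the paper's: use density to find $b\in K$ with $v(a-b)>va$ and conclude $va=vb\in vK$; then for the residue field take $a$ with $va=0$, find $b\in K$ with $v(a-b)>0$, and conclude $av=bv\in Kv$. The paper's version is simply terser---it states these two implications in one line without spelling out how density furnishes the approximating element or why a strictly positive value is available.
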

\begin{proof}
If $a\in L$ and $b\in K$ such that $v(a-b)>va$, then $va=vb\in vK$. If
$a\in L$ such that $va=0$ and $b\in K$ such that $v(a-b)>0$, then
$av=bv\in Kv$.
\end{proof}

The following is a well known consequence of the so-called ``Lemma of
Ostrowski'':

\begin{lemma}                               \label{lostr}
If a valued field $(L,v)$ is an immediate algebraic extension of a
henselian field $(K,v)$ of residue characteristic $0$, then $L=K$.
\end{lemma}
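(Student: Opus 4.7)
My plan is to argue by contradiction, reducing to a finite extension and then invoking Ostrowski's Lemma (the result whose name the lemma advertises).

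Suppose, towards a contradiction, that $L \supsetneq K$, and pick $a \in L \setminus K$. Since $L|K$ is algebraic, $K(a)|K$ is a finite extension of some degree $n = [K(a):K] > 1$, and $(K(a),v)$ is still an immediate extension of $(K,v)$ (immediacy is obviously inherited by subextensions, since $vK \subseteq vK(a) \subseteq vL$ and $Kv \subseteq K(a)v \subseteq Lv$ collapse all three to equality). Because $(K,v)$ is henselian, the extension of $v$ from $K$ to $K(a)$ is unique, so in the notation of the fundamental inequality (\ref{fundineq}) we have $\mathrm{g}=1$ with a single ramification index $\mathrm{e} = (vK(a):vK)$ and inertia degree $\mathrm{f} = [K(a)v:Kv]$. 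Immediacy forces $\mathrm{e} = \mathrm{f} = 1$.

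Now I invoke the Lemma of Ostrowski: for a finite extension of a henselian valued field $(K,v)$ with a unique extension, the defect $n / (\mathrm{e}\mathrm{f})$ is a power of the residue characteristic exponent $p$, where by convention $p = 1$ if the residue characteristic is $0$. Since the residue field $Kv$ has characteristic $0$ by hypothesis, we conclude $n = \mathrm{e}\mathrm{f} = 1$, contradicting $n > 1$. Hence no such $a$ exists and $L = K$.

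The only nontrivial ingredient is Ostrowski's Lemma itself, which the excerpt cites but does not prove; everything else is a bookkeeping exercise with the fundamental inequality and the definition of immediacy. The one place to be a bit careful is making sure that immediacy passes to the intermediate field $K(a)$, but this is immediate from the chain $vK \subseteq vK(a) \subseteq vL = vK$ and $Kv \subseteq K(a)v \subseteq Lv = Kv$. Once these reductions are in place, residue characteristic $0$ kills the defect factor and the proof concludes.
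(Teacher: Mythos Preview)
Your proof is correct and is precisely the standard deduction from Ostrowski's Lemma. The paper does not actually give a proof of this lemma: it merely states it as ``a well known consequence of the so-called `Lemma of Ostrowski'\,'' and moves on. Your argument fills in exactly the routine steps the paper omits --- reducing to a finite subextension, using henselianity to force a unique extension, reading off $\mathrm{e}=\mathrm{f}=1$ from immediacy, and then applying Ostrowski to kill the defect in residue characteristic $0$ --- so there is nothing to compare and nothing to correct.
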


\begin{lemma}                               \label{henseliz}
The henselization $K^h$ of a valued field $(K,v)$ (which is unique up to
valuation preserving isomorphism over $K$) is an immediate extension and
can be chosen in every henselian valued extension field of $(K,v)$.
\end{lemma}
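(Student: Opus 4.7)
My plan is to build $K^h$ concretely inside an algebraic closure via ramification theory, so that the three assertions (immediate, unique, and embeddable into any henselian extension) all fall out of the single construction.

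First I would fix an algebraic closure $\tilde{K}$ of $K$ and some extension of $v$ to $\tilde{K}$; such an extension exists by standard valuation theory. Let $G=\mathrm{Gal}(\tilde{K}|K)$ act on the set of extensions of $v$ to $\tilde{K}$ by $\sigma\cdot w = w\circ\sigma^{-1}$; this action is transitive (another classical fact). Define the decomposition group $G^Z=\{\sigma\in G:\sigma\cdot v=v\}$ and set $K^h:=\tilde{K}^{G^Z}$, the fixed field. Then $K^h|K$ is algebraic by construction. To see that $(K^h,v)$ is henselian I would verify that $v$ admits a unique extension to $\tilde{K}$ over $K^h$: any two extensions of $v|_{K^h}$ to $\tilde{K}$ are conjugate by some $\sigma\in\mathrm{Gal}(\tilde{K}|K^h)\subseteq G^Z$, which by definition fixes $v$, so the two extensions coincide. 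Since $\tilde{K}$ is algebraically closed, the fundamental inequality \eqref{fundineq} with $g=1$ then gives Hensel's Lemma for $(K^h,v)$.

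For the immediateness of $K^h|K$, I would appeal to the standard ramification-theoretic identities $vK^h=vK$ and $K^hv=Kv$ for the decomposition field. These follow because the inertia group $G^T\supseteq G^Z$ has quotient $G^T/G^Z$ isomorphic to a subgroup of the automorphism group of the residue extension, so already on the level of $G^Z$ one forces the value group and residue field to coincide with those of $K$; alternatively (and perhaps more cleanly in residue characteristic $0$) one uses that any proper immediate algebraic extension of a henselian field of residue characteristic zero is trivial (Lemma~\ref{lostr}), together with Lemma~\ref{degvr} applied to finite subextensions, to rule out any enlargement of value group or residue field strictly between $K$ and $K^h$.

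For uniqueness up to valuation-preserving $K$-iso\-mor\-phism, suppose $v_1,v_2$ are two extensions of $v$ to $\tilde{K}$ and $K^h_1,K^h_2$ are their decomposition fields. By transitivity of $G$ on extensions there is $\sigma\in G$ with $\sigma\cdot v_1=v_2$, and conjugation by $\sigma$ carries the decomposition group of $v_1$ to that of $v_2$, hence $\sigma(K^h_1)=K^h_2$ as valued fields over $K$. Finally, for the embedding into an arbitrary henselian extension $(L,v)$: enlarge $\tilde{K}$ to an algebraic closure $\tilde{L}$ of $L$ (so $\tilde{K}$ is the algebraic closure of $K$ inside $\tilde{L}$) and extend $v$ to $\tilde{L}$. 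Because $L$ is henselian this extension to $\tilde{L}$ is unique, so every $\tau\in\mathrm{Gal}(\tilde{L}|L)$ fixes $v$, and restriction to $\tilde{K}$ sends $\tau$ into the decomposition group of $v|_{\tilde{K}}$ over $K$. Taking fixed fields reverses the inclusion: the decomposition field $K^h$ is contained in $\tilde{L}^{\mathrm{Gal}(\tilde{L}|L)}=L$, which is exactly the desired embedding. The main point of care in the whole argument is this last step, where one must transfer the fact that $L$ is henselian (uniqueness of extension to $\tilde{L}$) into an inclusion of decomposition groups and then fixed fields; once that is set up correctly, everything else is standard Galois-theoretic bookkeeping.
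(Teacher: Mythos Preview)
The paper does not actually prove this lemma: it is listed in Section~\ref{sectprel} among the ``well known facts'' stated without proof, with references to [E], [R], [W], [Z--S], [K2]. So there is nothing to compare against, and your outline is essentially the standard ramification-theoretic construction one finds in those references. That said, a few points in your write-up need tightening.

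First, in positive characteristic you should work inside the separable closure $K^{\rm sep}$ rather than the full algebraic closure $\tilde K$. If $K$ is imperfect then $\tilde K^{G}$ is the perfect closure of $K$, not $K$ itself, so $\tilde K^{G^Z}$ contains purely inseparable elements over $K$ and is in general \emph{not} immediate over $K$ (for instance $v(a^{1/p})=\frac{1}{p}va$ need not lie in $vK$). The henselization is by definition separable over $K$; defining $K^h=(K^{\rm sep})^{G^Z}$ fixes this, and in the final embedding step the separability of $K^h|K$ is exactly what lets you pass from ``$a$ is fixed by $\mathrm{Gal}(\tilde L|L)$'' to ``$a\in L$'' even when $L$ is imperfect.

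Second, your ramification-theory sketch for immediateness is garbled: the inertia group sits \emph{inside} the decomposition group, $G^T\subseteq G^Z$, and the quotient $G^Z/G^T$ describes the residue extension of $K^T|K^h$, not of $K^h|K$. The immediateness of $K^h|K$ is a separate (standard) counting argument: for a finite Galois $N|K$ the decomposition field has degree equal to the number $g$ of extensions of $v$ to $N$, and the fundamental inequality then forces all $e_i=f_i=1$. Your ``alternative'' via Lemma~\ref{lostr} does not work as stated, since that lemma concerns extensions of \emph{henselian} fields while $K$ is not assumed henselian; I would drop that sentence.
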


\begin{lemma}                               \label{algehens}
An algebraic extension of a henselian valued field, equipped with the
unique extension of the valuation, is again henselian.
\end{lemma}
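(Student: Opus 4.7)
The plan is to reduce the statement to the well-known characterization of henselianity by the unique-extension property of the valuation. Specifically, I would first invoke (from any of the references [E], [R], [Z--S], [K2] cited in Section~\ref{sectprel}) the equivalence: a valued field $(F,v)$ is henselian if and only if $v$ admits a unique extension to every algebraic extension of $F$ (equivalently, to a fixed algebraic closure $\tilde F$). This equivalence is the non-trivial classical fact that does essentially all the work; note that the non-trivial direction already uses the fundamental inequality~(\ref{fundineq}) and Hensel's Lemma.

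Granted this characterization, the actual argument is very short. Let $(L,v)$ be any algebraic extension of the henselian field $(K,v)$, equipped with the unique extension of $v$ (which exists by the characterization applied to $(K,v)$, together with uniqueness of restrictions). To prove that $(L,v)$ is henselian, I would take an arbitrary algebraic extension $M|L$ together with two extensions $w_1,w_2$ of $v$ from $L$ to $M$. Since algebraicity is transitive, $M|K$ is algebraic as well, so $w_1$ and $w_2$ are both extensions of $v$ on $K$ to $M$. By the henselianity of $(K,v)$, such an extension is unique, hence $w_1=w_2$. Thus $v$ has a unique extension from $L$ to $M$; as $M$ was an arbitrary algebraic extension, the characterization gives that $(L,v)$ is henselian.

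The only substantive obstacle is the unique-extension characterization itself, but that lies outside the argument and is assumed as a classical prerequisite. An alternative and more self-contained route would be: take a Hensel instance $f\in\mathcal{O}_L[X]$ together with a lift $c\in\mathcal{O}_L$ of a simple root of $\bar f$ in $Lv$, let $K'\subseteq L$ be the finite subextension of $K$ generated by the coefficients of $f$ and by $c$, and apply Hensel's Lemma inside $K'$. This would, however, require proving separately that finite extensions of henselian fields are henselian, which in turn is typically established via the same fundamental inequality and ends up using exactly the same input, so it offers no real simplification over the unique-extension approach.
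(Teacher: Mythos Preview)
The paper does not give a proof of this lemma at all: it is one of the ``well known facts'' listed in Section~\ref{sectprel} with the explicit disclaimer that they are stated without proofs. Your argument via the unique-extension characterization is correct and is the standard one; there is nothing to compare it against in the paper.
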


\begin{lemma}                               \label{densevw}
Let $(L,v)$ be any field and $v=w\circ \ovl{w}$ where $w$ is
non-trivial. Take any subfield $L_0$ of $L$. Then $L_0$ is dense in
$(L,v)$ if and only if $L_0$ is dense in $(L,w)$.
\end{lemma}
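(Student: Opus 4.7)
The plan is to reduce both directions of the equivalence to the basic fact that, since $w$ is a coarsening of $v$ coming from a convex subgroup $\Gamma\subseteq vL$ (namely $\Gamma = \ovl{w}(Lw)$), the natural surjection $vL\to vL/\Gamma\cong wL$ is order-preserving, and sends $vx$ to $wx$ for every $x\in L^{\times}$. In particular, $v(a-b)\geq\alpha$ implies $w(a-b)\geq w\alpha$, and conversely $w(a-b)>w\alpha$ means the image of $v(a-b)-\alpha$ in $vL/\Gamma$ is strictly positive, i.e.\ $v(a-b)-\alpha>\gamma$ for every $\gamma\in\Gamma$, which forces $v(a-b)>\alpha$ (since $0\in\Gamma$).

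For the implication ``$v$-dense $\Rightarrow$ $w$-dense'', I would take any $a\in L$ and $\beta\in wL$, lift $\beta$ to some $\alpha\in vL$ with $w\alpha=\beta$ (possible because $vL\to wL$ is surjective), and use the $v$-density of $L_0$ to find $b\in L_0$ with $v(a-b)\geq\alpha$. Passing to $wL$ gives $w(a-b)\geq w\alpha=\beta$.

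For the reverse implication ``$w$-dense $\Rightarrow$ $v$-dense'', take $a\in L$ and $\alpha\in vL$. Since $w$ is non-trivial, $wL$ is a non-trivial ordered abelian group and so contains an element $\beta>w\alpha$. By $w$-density of $L_0$, choose $b\in L_0$ with $w(a-b)\geq\beta>w\alpha$. The observation recorded in the first paragraph then yields $v(a-b)>\alpha$, hence $v(a-b)\geq\alpha$ as required.

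I do not expect any significant obstacle; the only point that deserves a line of care is the existence of $\beta>w\alpha$ in $wL$, which is guaranteed by the non-triviality hypothesis on $w$ (an ordered abelian group has no maximal element as soon as it is non-zero). Everything else is formal manipulation of the exact sequence $0\to\Gamma\to vL\to wL\to 0$.
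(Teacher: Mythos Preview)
Your argument is correct. The paper does not actually prove this lemma: it is listed in Section~\ref{sectprel} among the ``well known facts'' that are stated without proof, so there is nothing to compare your approach against; your proof is the standard one via the order-preserving surjection $vL\twoheadrightarrow vL/\Gamma\cong wL$, using the non-triviality of $w$ exactly where it is needed.
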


\begin{lemma}                               \label{henswov}
Let $(K,v)$ be any field and $v=w\circ \ovl{w}$. Then
$(K,v)$ is henselian if and only if $(K,w)$ and $(Kw,\ovl{w})$ are.
\end{lemma}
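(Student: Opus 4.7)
My plan is to prove each direction by Hensel's Lemma where a direct lifting argument succeeds, and otherwise by the uniqueness-of-extensions characterization of henselianity. The central computational fact I will use throughout is that for any $a\in{\cal O}_v\subseteq{\cal O}_w$ one has $av=(aw)\ovl{w}$, so that for any monic $f\in{\cal O}_v[X]$ the reduction factors as $fv=(fw)\ovl{w}$ in $(Kv)[X]$.

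For the forward direction, assume $(K,v)$ is henselian. To see that $(Kw,\ovl{w})$ is henselian, I would take a monic $\ovl{g}\in{\cal O}_{\ovl{w}}[X]$ whose reduction has a simple root $\alpha\in Kv$, lift each coefficient through the surjection ${\cal O}_v\to{\cal O}_{\ovl{w}}$ to obtain a monic $g\in{\cal O}_v[X]$ with $gw=\ovl{g}$ and hence $gv=\ovl{g}\,\ovl{w}$; henselianity of $(K,v)$ then yields a root $a\in{\cal O}_v$ with $av=\alpha$, and $aw\in{\cal O}_{\ovl{w}}$ is a root of $\ovl{g}$ reducing to $\alpha$. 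To see that $(K,w)$ is henselian, I would switch to the uniqueness criterion: given two extensions $w_1,w_2$ of $w$ to an algebraic extension $L|K$, extend $\ovl{w}$ to each residue field $Lw_i$ (which is algebraic over $Kw$ by Corollary~\ref{aaa}) and compose to produce extensions $v_i=w_i\circ\ovl{w}_i^{\ast}$ of $v$ to $L$. Henselianity of $(K,v)$ forces $v_1=v_2$; call this common valuation $\tilde{v}$. By Corollary~\ref{aaa} again, $\tilde{v}L/vK$ is torsion, and in any torsion extension of ordered abelian groups the convex subgroups of the larger group correspond bijectively (via intersection) to those of the smaller. Hence the convex subgroup $\Gamma\subseteq vK$ with $vK/\Gamma=wK$ determines a unique convex subgroup of $\tilde{v}L$, which in turn determines the unique coarsening of $\tilde{v}$ restricting to $w$ on $K$. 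Both $w_1$ and $w_2$ must coincide with this coarsening.

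For the converse, assume $(K,w)$ and $(Kw,\ovl{w})$ are henselian. Given a monic $f\in{\cal O}_v[X]$ whose reduction $fv$ has a simple root $\alpha\in Kv$, I would write $fv=(fw)\ovl{w}$ and apply henselianity of $(Kw,\ovl{w})$ to $fw\in{\cal O}_{\ovl{w}}[X]$, obtaining a simple root $\beta\in{\cal O}_{\ovl{w}}$ of $fw$ with $\beta\ovl{w}=\alpha$. Lifting $\beta$ to some $b_0\in{\cal O}_v$ gives $w(f(b_0))>0$ while $w(f'(b_0))=0$, so henselianity of $(K,w)$ produces $a\in{\cal O}_w$ with $f(a)=0$ and $w(a-b_0)>0$. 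The latter forces $aw=\beta\in{\cal O}_{\ovl{w}}$, hence $a\in{\cal O}_v$ and $av=\beta\ovl{w}=\alpha$, as required.

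The main obstacle I expect is the $(K,v)\Rightarrow(K,w)$ step: one must argue carefully that the coarsening of $\tilde{v}$ giving back $w_1$ (resp.\ $w_2$) is intrinsically determined by $\Gamma$, so that the equality $v_1=v_2$ forced by henselianity of $(K,v)$ really transfers to the equality $w_1=w_2$. This ultimately reduces to the order-theoretic fact that in a torsion extension of ordered abelian groups, convex subgroups are recoverable from their intersections with the base group; once that is in hand, everything else is routine manipulation of valuation rings and residue maps.
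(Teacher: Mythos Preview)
The paper does not supply a proof of this lemma; it is listed among the ``well known facts'' in Section~\ref{sectprel} that are stated without proof. So there is nothing to compare your argument against on the paper's side.

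Your proposal is correct and follows the standard route. The two Hensel-lifting steps (for $(K,v)\Rightarrow(Kw,\ovl{w})$ henselian, and for the converse direction) are routine and you have handled them cleanly; in particular your observation that $a\in{\cal O}_w$ together with $aw=\beta\in{\cal O}_{\ovl{w}}$ forces $a\in{\cal O}_v$ is exactly what is needed to close the converse. For the step $(K,v)\Rightarrow(K,w)$ henselian, the uniqueness-of-extensions argument is indeed the right tool, and the point you flagged as the main obstacle is correctly resolved: each $w_i$ is a coarsening of the common valuation $\tilde v=v_1=v_2$, hence corresponds to a convex subgroup $\Gamma_i\subseteq\tilde v L$ with $\Gamma_i\cap vK=\Gamma$; since $\tilde v L/vK$ is torsion the map $C\mapsto C\cap vK$ is a bijection between convex subgroups, so $\Gamma_1=\Gamma_2$ and $w_1=w_2$. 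Nothing is missing.
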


\begin{corollary}                           \label{henswovcor}
Let $(K,v)$ be any field and $v=w\circ \ovl{w}$. If $(Kw,\ovl{w})$ is
henselian, then the henselization of $(K,v)$ is equal to the
henselization of $(K,w)$ (as fields).
\end{corollary}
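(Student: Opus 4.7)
The plan is to apply Lemma~\ref{henswov} in both directions, using Lemma~\ref{henseliz} to place both henselizations inside a common algebraic closure of $K$ so that they can be compared directly as subfields rather than merely up to isomorphism.

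First I would fix an algebraic closure $\tilde K$ of $K$ together with extensions of $v$ (and hence of its coarsening $w$) to $\tilde K$, and choose the henselization $K^{h_w}$ of $(K,w)$ as a subfield of $\tilde K$, as permitted by Lemma~\ref{henseliz}. That same lemma gives that $K^{h_w}|K$ is $w$-immediate, so $K^{h_w}w=Kw$. Extending $v$ to $K^{h_w}$ via the factorization $v=w\circ\ovl{w}$ applied with $w$ already extended, I observe that $(K^{h_w},w)$ is henselian by construction, while $(K^{h_w}w,\ovl{w})=(Kw,\ovl{w})$ is henselian by hypothesis. Lemma~\ref{henswov} then yields that $(K^{h_w},v)$ is henselian. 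Because $K^{h_w}|K$ is algebraic, a henselization $K^h$ of $(K,v)$ can be chosen inside $K^{h_w}$ by Lemma~\ref{henseliz}, giving $K^h\subseteq K^{h_w}$.

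For the opposite inclusion, $(K^h,v)$ is henselian, so Lemma~\ref{henswov} (this time used in the easy direction) gives that $(K^h,w)$ is also henselian. Since $K^h|K$ is algebraic, $K^h$ is a $w$-henselian algebraic extension of $(K,w)$ and hence contains a henselization of $(K,w)$; inside $\tilde K$ with our fixed extension of $w$, this forces $K^{h_w}\subseteq K^h$, and therefore $K^h=K^{h_w}$ as subfields of $\tilde K$.

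The main obstacle is really just bookkeeping: arranging the henselizations inside a common algebraic closure with compatible extensions of $v$ and $w$, so that the two inclusions are honest inclusions of the same two subfields rather than equalities up to isomorphism. Once that setup is fixed, the statement is essentially a symmetric double application of Lemma~\ref{henswov}, and no further valuation-theoretic content is needed beyond the lemmas already stated in this section.
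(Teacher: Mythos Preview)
Your argument is correct. The paper does not give a proof of this corollary at all: it is listed among the ``well known facts'' in Section~\ref{sectprel} that are stated without proof, so there is nothing to compare against. Your double application of Lemma~\ref{henswov}, combined with the minimality of henselizations furnished by Lemma~\ref{henseliz}, is the standard route and exactly what the paper intends the reader to supply.
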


The value group $vK$ of a valued field $(K,v)$ is \bfind{archimedean} if
it is embeddable in the ordered additive group of the reals. This holds
if and only if every convex subgroup of $vK$ is equal to $\{0\}$ or to
$vK$.

\begin{lemma}                               \label{rk1dense}
If $(K,v)$ is a valued field such that $vK$ is archimedean, then $K$
is dense in its henselization. In particular, the completion of
$(K,v)$ is henselian.
\end{lemma}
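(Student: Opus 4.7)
The plan is to reduce the statement to the well-known fact that the completion of a rank one valued field is henselian; given that, the density part of the lemma is essentially automatic.

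First I would observe that since $vK$ embeds in the ordered additive group of $\R$, the valuation topology on $K$ is induced by a metric (e.g.\ fix an embedding $\iota:vK\hookrightarrow\R$ and set $d(a,b)=\exp(-\iota\,v(a-b))$). Hence one can form the topological completion $(\hat K,\hat v)$, whose valuation extends $v$. A standard argument shows that $(\hat K,\hat v)|(K,v)$ is immediate: every Cauchy sequence of non-zero elements of $K$ either has eventually constant value or value tending to $+\infty$, so $\hat v\hat K=vK$; and applying the residue map to a representative bounded Cauchy sequence shows $\hat K\hat v=Kv$. In particular $K$ is dense in $\hat K$ by the very construction of the completion.

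Next I would prove the ``in particular'' clause, namely that $(\hat K,\hat v)$ is henselian, by Newton's iteration. Given $f\in\hat{\cal O}[X]$ and $a_0\in\hat{\cal O}$ with $\hat v(f(a_0))>2\hat v(f'(a_0))$, define $a_{n+1}=a_n-f(a_n)/f'(a_n)$. By a routine induction using the Taylor expansion of $f$ one shows that $\hat v(f'(a_n))=\hat v(f'(a_0))$, $\hat v(a_{n+1}-a_n)\geq \hat v(f(a_n))-\hat v(f'(a_0))$, and $\hat v(f(a_{n+1}))\geq 2\hat v(f(a_n))-2\hat v(f'(a_0))$. Thus $\hat v(a_{n+1}-a_n)\to\infty$, so $(a_n)$ is Cauchy in $\hat K$, and by completeness it converges to some $a\in\hat K$ with $f(a)=0$ and $\hat v(a-a_0)>\hat v(f'(a_0))$, which is Hensel's lemma for $\hat K$.

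Once $\hat K$ is known to be henselian, Lemma~\ref{henseliz} allows us to choose the henselization $K^h$ of $(K,v)$ inside $\hat K$ over $K$. Since $K$ is dense in $\hat K$ and $K\subseteq K^h\subseteq \hat K$, a fortiori $K$ is dense in $K^h$. The only step requiring genuine care is the Newton iteration estimate verifying that the completion satisfies Hensel's Lemma; the other ingredients (existence of the completion, its immediacy, and the universal property of the henselization) are packaged by earlier lemmas in the excerpt.
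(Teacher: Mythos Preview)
The paper does not actually prove this lemma; it appears in Section~\ref{sectprel} among the ``well known facts without proofs,'' with pointers to the standard valuation-theory references [E], [R], [W], [Z--S]. Your argument is correct and is exactly the classical textbook route one finds in those references: establish first that the rank-one completion $(\hat K,\hat v)$ is henselian via Newton iteration, then invoke the universal property of the henselization (Lemma~\ref{henseliz}) to realize $K^h$ inside $\hat K$, from which density of $K$ in $K^h$ follows at once. The logical order you chose---proving the ``in particular'' clause first and deducing the main assertion from it---is the natural one.

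One small point worth making explicit in your write-up is where the archimedean hypothesis actually bites. In the Newton scheme, setting $\delta_n=\hat v(f(a_n))-2\hat v(f'(a_0))$, your estimate gives $\delta_{n+1}\geq 2\delta_n$ with $\delta_0>0$; to conclude $\hat v(a_{n+1}-a_n)\to\infty$ you need $\delta_n$ to be cofinal in the value group, and this is precisely what the archimedean property guarantees (integer multiples of any positive element are unbounded). Without that hypothesis the iteration may stall inside a proper convex subgroup, and indeed completions of higher-rank valued fields need not be henselian---which is why the lemma is stated only for archimedean $vK$.
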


The following result is an easy application of Hensel's Lemma:
\begin{lemma}                               \label{for}
Take $(K,v)$ to be a henselian valued field of residue characteristic
$\chara Kv =0$. Take any subfield $K_0$ of $K$ on which $v$ is trivial.
Then there is a subfield $K'$ of $K$ containing $K_0$ and such that $v$
is trivial on $K'$ and the residue map associated with $v$
induces an isomorphism from $K'$ onto $Kv$. If $Kv|K_0v$ is algebraic,
then so is $K'|K_0\,$.
\end{lemma}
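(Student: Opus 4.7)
The plan is to build $K'$ as a maximal unramified lift of the residue field, using Zorn's lemma and Hensel's lemma.

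First I would apply Zorn's lemma to the family of subfields of $K$ that contain $K_0$ and on which $v$ is trivial (this family is nonempty by hypothesis and closed under unions of chains), obtaining a maximal such subfield $K'$. Since $v$ is trivial on $K'$, the residue map gives an injective field homomorphism $K' \to Kv$. It remains to show this map is surjective and then to read off the algebraicity statement.

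Suppose toward a contradiction that there exists $\alpha \in Kv \setminus K'v$. I would split into two cases. If $\alpha$ is transcendental over $K'v$, pick any $a \in K$ with $av=\alpha$; then $a$ is transcendental over $K'$, and for any nonzero $f(X) = \sum c_i X^i \in K'[X]$ the element $f(a)$ has residue $\sum (c_iv)\alpha^i \neq 0$, so $vf(a)=0$ and hence $v$ is trivial on $K'(a)$, contradicting the maximality of $K'$. If $\alpha$ is algebraic over $K'v$, let $p \in K'v[X]$ be its (separable, since $\chara Kv=0$) minimal polynomial, and let $P \in K'[X]$ be the monic lift of $p$ under the isomorphism $K' \to K'v$. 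Pick any $b \in K$ with $bv=\alpha$; then $vP(b)>0$ while $vP'(b)=0$ since $p'(\alpha)\neq 0$, so by Hensel's Lemma (applicable because $(K,v)$ is henselian) there is $a\in K$ with $P(a)=0$ and $av=\alpha$. The extension $K'(a)|K'$ is algebraic, so by Corollary~\ref{aaa} $v$ is again trivial on $K'(a)$. Since $\alpha \notin K'v$, we have $a \notin K'$, contradicting maximality once more.

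Hence the residue map $K' \to Kv$ is an isomorphism of fields. Finally, this isomorphism restricts to the canonical residue isomorphism $K_0 \to K_0v$, so under it algebraicity of $Kv|K_0v$ corresponds exactly to algebraicity of $K'|K_0$; this yields the last assertion.

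The main technical point is the Hensel step in the algebraic case: one has to produce a root $a \in K$ of $P$ with $av=\alpha$ \emph{and} then argue that passing from $K'$ to $K'(a)$ preserves triviality of $v$. The first is handled by residue characteristic zero plus the henselian hypothesis, and the second is the content of Corollary~\ref{aaa}, which applies because $vK \subseteq$ an ordered (hence torsion-free) group makes any algebraic extension of a trivially valued subfield again trivially valued.
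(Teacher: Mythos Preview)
Your proof is correct and is precisely the standard Zorn's-lemma-plus-Hensel argument the paper alludes to; the paper itself does not spell out a proof, stating only that the lemma is ``an easy application of Hensel's Lemma,'' and your argument is exactly the intended one.
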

\n
A field $K'$ as in this lemma is called a \bfind{field of
representatives for the residue field} $Kv$.

\begin{proposition}                               \label{ffnothens}
a) \ Take a non-empty set $T$ of elements algebraically independent over
$K$ and a finite extension $F$ of $K(T)$. Then no non-trivial valuation
on $F$ is henselian. In particular, no non-trivial valuation on an
algebraic function field (of transcendence degree at least one) is
henselian.
\sn
b) \ Fix $n\in\N$, take $K(T)$ as in a) and take $F$ to be the closure
of $K(T)$ under successive adjunction of roots of polynomials of degree
$\leq n$. Then no non-trivial valuation on $F$ is henselian.
\end{proposition}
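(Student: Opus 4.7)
The plan is to assume for contradiction that some non-trivial valuation $v$ on $F$ is henselian, and then use Hensel's lemma to manufacture algebraic elements of arbitrarily large prime degree over $K(T)$. This will contradict the finiteness of $[F:K(T)]$ in (a), and the bound on adjunction degrees in (b).

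First I reduce to the case where $v$ is non-trivial on $K(T)$: if $v|_{K(T)}$ were trivial, Corollary~\ref{aaa} applied to the algebraic extension $F|K(T)$ would force $v$ to be trivial on $F$, contradicting the hypothesis. So I can choose $\phi_0 \in K(T)$ with $v\phi_0 > 0$ (replacing by $1/\phi_0$ if needed). To guarantee the element is not in $K$, if $\phi_0 \in K$ I pick any $t \in T$, set $u := t$ or $u := 1/t$ according as $vt \geq 0$ or $vt < 0$, and replace $\phi_0$ by $\phi := \phi_0(1+u)$; then $u \in K(T)\setminus K$ with $vu \geq 0$, hence $v(1+u) = 0$, so $\phi \in K(T)\setminus K$ with $v\phi = v\phi_0 > 0$. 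In either case I obtain $\phi \in K(T) \setminus K$ with $v\phi > 0$.

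Next, write $\phi$ in terms of finitely many $t_1,\ldots,t_r \in T$, so $\phi \in K(t_1,\ldots,t_r)$ and $1+\phi \notin K$ (else $\phi \in K$). Writing $1+\phi = f/g$ with $f,g$ coprime in the UFD $K[t_1,\ldots,t_r]$, at least one irreducible factor appears with non-zero exponent. If $1+\phi$ were an $\ell$-th power in $K(t_1,\ldots,t_r)$, all such exponents would have to be divisible by $\ell$, so this can occur only for the finitely many primes $\ell$ dividing one of these exponents. Moreover, since $K(T)|K(t_1,\ldots,t_r)$ is purely transcendental, any $\ell$-th root of $1+\phi$ lying in $K(T)$ is algebraic over $K(t_1,\ldots,t_r)$ and therefore already lies in $K(t_1,\ldots,t_r)$; so the conclusion persists over $K(T)$.

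For each remaining prime $\ell$ (also excluding $\ell = \chara Fv$), the polynomial $X^\ell - (1+\phi) \in F[X]$ reduces modulo the valuation ideal to $X^\ell - 1 \in Fv[X]$, which has $X=1$ as a simple root since $\ell \ne 0$ in $Fv$. Hensel's lemma then yields $\alpha_\ell \in F$ with $\alpha_\ell^\ell = 1+\phi$; since $\alpha_\ell \notin K(T)$ and $X^\ell-(1+\phi)$ has prime degree $\ell$, the minimal polynomial of $\alpha_\ell$ over $K(T)$ has degree exactly $\ell$. In case (a) this supplies unboundedly many primes $\ell \leq [F:K(T)]$, contradicting finiteness. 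In case (b), any such $\alpha_\ell$ lies in a finite subextension $F'\supseteq K(T)$ of $F$ obtained by finitely many successive adjunctions of roots of polynomials of degree $\leq n$; then $[F':K(T)]$ is a product of integers $\leq n$, so every prime divisor of $[F':K(T)]$ is $\leq n$, contradicting the fact that $\ell$ divides $[F':K(T)]$ with $\ell > n$ for large enough $\ell$. The main technical obstacle is the middle paragraph, where the non-$\ell$-th-power property of $1+\phi$ requires combining the UFD factorization in $K[t_1,\ldots,t_r]$ with the standard fact that purely transcendental extensions introduce no new algebraic elements.
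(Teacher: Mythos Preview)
Your proof is correct and reaches the same contradiction via Hensel's Lemma, but the key step --- showing that an $\ell$-th root of $1+\phi$ has degree $\ell$ over $K(T)$ for all sufficiently large primes $\ell$ --- is handled differently. The paper first reduces to a single transcendental $x$ over $K$, sets $y=1+x$, and introduces an \emph{auxiliary} valuation, the $y$-adic valuation $v_y$ on $K(x)=K(y)$: since $v_y y$ generates $v_y K(x)\simeq\Z$, any $q$-th root of $y$ has $v_y$-value $\frac{1}{q}v_y y$, forcing ramification index and hence degree $\geq q$ by the fundamental inequality. You instead stay with the given data and use unique factorization in $K[t_1,\ldots,t_r]$ to see that $1+\phi$ can be an $\ell$-th power for only finitely many primes $\ell$, combined with the standard fact that $X^\ell-a$ is irreducible for prime $\ell$ whenever $a\notin K(T)^\ell$. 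Your route is more elementary and avoids the second valuation; the paper's route stays within the valuation-theoretic toolkit and handles part~(b) uniformly via the value group $v_y F$ rather than via degrees of finite subtowers as you do. (One harmless slip: when $vt=0$ you only get $v(1+u)\geq 0$, not $=0$, but $v\phi=v\phi_0+v(1+u)>0$ still follows.)
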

\begin{proof}
Choosing any $t\in T$ and replacing $K$ by $K(T\setminus \{t\})$, we may
assume in parts a) and b) that $T$ consists of a single element, i.e.,
$\trdeg F|K=1$.

Take any non-trivial valuation on $F$. We show that there is some $x\in
K(T)$ such that $vx>0$ and $x$ is transcendental over $K$. Assume first
that $v$ is trivial on $K$. Since $v$ is non-trivial on $F$ and $F|K(T)$
is algebraic, Corollary~\ref{aaa} shows that $v$ is non-trivial on
$K(T)$. Hence there must be some $x\in K(T)$ such that $vx\ne 0$.
Replacing $x$ by $x^{-1}$ if necessary, we may assume that $vx>0$. It
follows that $x\notin K$, so $x$ is transcendental over $K$.

Now assume that $v$ is not trivial on $K$, and take an arbitrary $x\in
K(T)$ transcendental over $K$. If $vx>0$, we are done. If $vx<0$, we
replace it by $x^{-1}$ and we are done again. If $vx=0$, we pick some
$c\in K$ such that $vc>0$. Then $vcx>0$ and $cx$ is transcendental
over $K$, hence replacing $x$ by $cx$ finishes the proof of our claim.

Pick any positive integer $q$ such that $q$ is not divisible by the
characteristic $p:=\chara Kv$ of the residue field $Kv$. By Hensel's
Lemma, any henselian extension of $K(x)$ will contain a $q$-th root of
the $1$-unit $y:=1+x$. We wish to show that any algebraic extension of
$K(x)$ containing such a $q$-th root must be of degree at least $q$ over
$K(x)$. A valuation theoretical proof for this fact reads as follows.
Take the $y$-adic valuation $v_y$ on $K(x)=K(y)$. Then $v_y y$ is the
least positive element in the value group $v_y K(x)\isom\Z$, and any
$q$-th root $b$ of $y$ will have $v_y$-value $\frac{1}{q}v_y y$. This
shows that $(v_y K(x)(b):v_y K(x))\geq q$. By the fundamental
inequality, it follows that $[K(x,b):K(x)]\geq (v_y K(x,b): v_y
K(x))\geq q$.

\sn
Proof of part a): \ Since $\trdeg F|K=1$, $x\in K(T)$ is transcendental
over $K$ and $F|K$ is finite, also $F|K(x)$ is finite. Pick $q>[F:K(x)]$
not divisible by $p$. Then it follows that $F$ does not contain a $q$-th
root of $y$, and so $(F,v)$ cannot be henselian.

\sn
Proof of part b): \ This time, we still have that $K(T)|K(x)$ is finite.
Pick a prime $q>\max\{n,[K(T):K(x)]\}$, $q\ne p$. For every element
$\alpha$ in the value group $v_y F$ there is an integer $e$
which is a product of positive integers $\leq n$ such that $e\alpha\in
v_y K(T)$. Further, there is a positive integer $e'$ such that
$e'e\alpha\in v_y K(x)$. On the other hand, by our choice of $q$, it
does not divide $e'e$. Since the order of the value $\frac{1}{q}v_y y$
modulo $v_y K(x)$ is $q$, it follows that this value does not lie in
$v_y F$. Hence again, $(F,v)$ cannot be henselian.
\end{proof}

\begin{proposition}                               \label{excomplh}
Take $(L,v)$ to be a henselian field of residue characteristic $\chara
Lv=0$, and $K$ a subfield of $L$ such that $L|K$ is algebraic. Then $K$
admits an algebraic extension $L_0$ inside of $L$ such that the
extension of $v$ from $K$ to $L_0$ is unique, $L_0$ is linearly disjoint
over $K$ from the henselization $K^h$ of $K$ in $L$, and
$L=L_0.K^h=L_0^h$.
\end{proposition}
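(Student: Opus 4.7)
The plan is to obtain $L_0$ via Zorn's lemma as a maximal subfield of $L$ containing $K$ for which $v|_K$ extends uniquely to $L_0$; since $\chara Kv=0$ makes every algebraic extension separable, this condition coincides with the linear disjointness of $L_0$ and $K^h$ over $K$, and it is clearly preserved under unions of chains. A preliminary observation: inside $L$, the compositum $L_0\cdot K^h$ is henselian by Lemma~\ref{algehens} and algebraic over $L_0$, so it contains the henselization $L_0^h$ of $L_0$ (realized inside $L$ by Lemma~\ref{henseliz}); conversely, $L_0^h$ is henselian and contains $K$, hence also contains $K^h$ by the uniqueness of the henselization of $K$ inside any henselian extension. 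Thus $L_0\cdot K^h=L_0^h$, and Lemma~\ref{henseliz} then shows this extension is immediate over $L_0$, so $v(L_0\cdot K^h)=vL_0$ and $(L_0\cdot K^h)v=L_0v$.

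I then claim $L=L_0\cdot K^h$. If not, then $L\,|\,L_0\cdot K^h$ is a nontrivial algebraic extension of a henselian field of residue characteristic $0$, which by Lemma~\ref{lostr} cannot be immediate. By the preliminary, either $Lv\supsetneq L_0v$, or else $Lv=L_0v$ and $vL\supsetneq vL_0$. In each case I will enlarge $L_0$ properly inside $L$ while preserving linear disjointness from $K^h$ over $K$, contradicting maximality.

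In the residue case, pick $\bar b\in Lv\setminus L_0v$, let $\bar p\in L_0v[X]$ be its minimal polynomial, and lift to a monic $p\in L_0[X]$; Hensel's lemma in the henselian $(L,v)$ then provides a root $b\in L$ of $p$ with $bv=\bar b$. Since $\bar p$ is irreducible and separable over $L_0v=(L_0\cdot K^h)v$, the factorization version of Hensel's lemma, applied to the henselian $L_0\cdot K^h$, keeps $p$ irreducible over $L_0\cdot K^h$ and hence over $L_0$. This yields $[L_0(b)\cdot K^h:L_0\cdot K^h]=\deg p=[L_0(b):L_0]$, which combined with the linear disjointness of $L_0$ and $K^h$ over $K$ gives the linear disjointness of $L_0(b)$ and $K^h$ over $K$, contradicting maximality.

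In the value-group case ($L_0v=Lv$), pick $\gamma\in vL\setminus vL_0$ of finite order $n$ in $vL/vL_0$ (finite by Corollary~\ref{aaa}) and $a\in L$ with $va=\gamma$; then $a^n=cu$ with $c\in L_0$ of value $n\gamma$ and $u\in L$ a unit. Pick $d\in L_0$ with $dv=uv$ (using $L_0v=Lv$); since $n$ is invertible in residue characteristic $0$, Hensel's lemma in $L$ supplies $w\in L$ with $w^n=u/d$, and replacing $a$ by $a/w$ I may assume $a^n=cd\in L_0$. The fundamental inequality together with $(vL_0(a):vL_0)\geq n$ forces $X^n-cd$ to be the minimal polynomial of $a$ over $L_0$; since $v(L_0\cdot K^h)=vL_0$, the same argument makes $X^n-cd$ the minimal polynomial of $a$ over $L_0\cdot K^h$ as well, again yielding linear disjointness of $L_0(a)$ and $K^h$ over $K$. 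Both cases contradict maximality, so $L=L_0\cdot K^h=L_0^h$. The main obstacle I foresee is the value-group case: arranging $a^n\in L_0$ via the $1$-unit Hensel trick and the appeal to $v(L_0\cdot K^h)=vL_0$ to keep $X^n-cd$ irreducible over $L_0\cdot K^h$ are the delicate steps, while the residue case reduces cleanly to Hensel's lemma and its factorization form.
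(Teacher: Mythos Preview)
Your proof is correct and follows essentially the same strategy as the paper: take $L_0$ maximal with unique extension of $v$, handle the residue-field case by Hensel-lifting a root of the minimal polynomial, handle the value-group case by the $n$-th-root ($1$-unit) trick, and invoke Ostrowski (Lemma~\ref{lostr}) to conclude. The only cosmetic difference is that the paper contradicts maximality by using the fundamental inequality directly (showing $[L_0(z):L_0]=[L_0(z)v:L_0v]$ or $=(vL_0(z):vL_0)$, hence unique extension), whereas you contradict it by showing the minimal polynomial stays irreducible over the immediate extension $L_0\cdot K^h$ and then translate that into linear disjointness; these are equivalent formulations of the same step.
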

\begin{proof}
Take any subextension $L_0|K$ of $L|K$ maximal with the property that
the extension of $v$ from $K$ to $L_0$ is unique. By general
ramification theory it follows that $L_0|K$ is linearly disjoint from
$K^h|K$ and that $L_0^h=L_0.K^h$. We only have to show that $L_0^h=L$.
Note that $L|L_0^h$ is algebraic since already $L|K$ is algebraic,

Let us show that $L_0v=Lv$. If this is not the case, then there is
be some element $\zeta\in Lv\setminus L_0v$. By Corollary~\ref{aaa},
$Lv|L_0v$ is algebraic. Let $g\in L_0v[X]$ be the minimal polynomial of
$\zeta$ over $L_0v$. Since $\chara Kv=0$, $g$ is separable. We choose
some monic polynomial $f$ with integral coefficients in $L_0$ whose
reduction modulo $v$ is $g$; it follows that $\deg f=\deg g$. Since
$\zeta$ is a simple root of $g$, it follows from Hensel's Lemma that the
henselian field $(L,v)$ contains a root $z$ of $f$ whose residue is
$\zeta$. We have
\[[L_0(z):L_0]\>\leq\deg f \>=\>\deg g \>=\>[L_0v(\zeta):L_0v]\>\leq\>
[L_0(z)v:L_0v]\>\leq\>[L_0(z):L_0]\;,\]
where the last inequality follows from Lemma~\ref{degvr}. We conclude
that $[L_0(z):L_0]=[L_0(z)v:L_0v]$. From the fundamental inequality it
follows that the extension of $v$ from $L_0$ (and hence also from $K$)
to $L_0(z)$ is unique. But this contradicts the maximality of $L_0\,$.
Hence, $L_0v=Lv$.

Next, let us show that $vL_0=vL$. If this is not the case, then there is
some $\alpha\in vL\setminus vL_0\,$. By Corollary~\ref{aaa}, $vL/vL_0$
is a torsion group and hence there is some $n>1$ such that $n\alpha\in
vL_0\,$. We choose $n$ minimal with this property, so that
$(vL_0+\alpha\Z:vL_0)=n$. Further, we pick some $a\in L$ such that $va=
\alpha$. Since $n\alpha\in vL_0\,$, there is some $d\in L_0$ such that
$vd=n\alpha=va^n$. It follows that $va^n/d=0$, and since we have
already shown that $Lv=L_0v$, we can choose some $c\in L_0$ such that
$(a^n/cd)v=1$. Consequently, the reduction of $X^n-a^n/cd$ modulo $v$ is
the polynomial $X^n-1$, which admits $1$ as a simple root since
$\chara Kv=0$. Hence by Hensel's Lemma, $X^n-a^n/cd$ admits a root $b$
in the henselian field $(L,v)$. For $z:=\frac{a}{b}$ it follows that
\[nvz\>=\>v\frac{a^n}{b^n}\>=\>vcd\>=\>vd\>=\>n\alpha,\]
which shows that $\alpha=vz\in vL_0(z)$. We have
\[[L_0(z):L_0]\>\leq \>n \>=\>(vL_0+\alpha\Z:vL_0)\>\leq\>
(vL_0(z):vL_0)\>\leq\>[L_0(z):L_0]\;,\]
where again the last inequality follows from Lemma~\ref{degvr}. We
conclude that $[L_0(z):L_0]=(vL_0(z):vL_0)$. From the fundamental
inequality it follows that the extension of $v$ from $L_0$ (and hence
also from $K$) to $L_0(z)$ is unique. But this again contradicts the
maximality of $L_0\,$. Hence, $vL_0=vL$.

We have shown that $vL=vL_0$ and $Lv=L_0v$. Hence, $vL=vL_0^h$ and
$Lv=L_0^hv$. As $L|L_0$ is algebraic, the same is true for $L|L_0^h$.
Since the residue field characteristic of $(L,v)$ is zero,
Lemma~\ref{lostr} shows that $L=L_0^h$. This concludes our proof.
\end{proof}

%
%
\section{Dense subfields}                   \label{sectds}
In this section we prove the existence of proper dense subfields of
henselian fields with residue characteristic $0$.

\begin{proposition}                         \label{d1}
Take a henselian valued field $(L,v)$ such that $vL$ admits a maximal
proper convex subgroup $\Gamma$. Assume that $\chara Lv_\Gamma=0$.
Then $L$ admits a proper dense subfield $L_0$ such that $L|L_0$ is
algebraic.
\end{proposition}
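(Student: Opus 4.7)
The plan is to pass first to the coarsening $w := v_\Gamma$. Because $\Gamma$ is the maximal proper convex subgroup of $vL$, the value group $wL \isom vL/\Gamma$ is archimedean and $w$ is non-trivial. By Lemma~\ref{henswov}, $(L,w)$ inherits henselianity from $(L,v)$, and its residue field $Lw = Lv_\Gamma$ has characteristic $0$ by hypothesis. Moreover, by Lemma~\ref{densevw}, a subfield is dense in $(L,v)$ if and only if it is dense in $(L,w)$. So it suffices to exhibit a proper subfield $L_0 \subsetneq L$ which is dense in $(L,w)$ and over which $L$ is algebraic.

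To set up the base field for the main construction, I would invoke Lemma~\ref{for} with the trivially valued $K_0 = \Q \subseteq L$ (available since $\chara L = 0$), obtaining a field of representatives $L' \subseteq L$ with $L' \isom Lw$ via the residue map. Since $wL \ne 0$ while $wL' = 0$, Corollary~\ref{aaa} forces $L|L'$ to be transcendental, so a non-empty transcendence basis $\mathcal{B}$ of $L$ over $L'$ exists. Put $K := L'(\mathcal{B})$; then $L|K$ is algebraic, $w$ is non-trivial on $K$ (again by Corollary~\ref{aaa}, using that $wL$ is non-zero and $wL|wK$ is torsion), and Proposition~\ref{ffnothens}(a) guarantees that $(K,w)$ is \emph{not} henselian.

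Now I would apply Proposition~\ref{excomplh} to the henselian residue-characteristic-$0$ field $(L,w)$ and to the algebraic subextension $L|K$: this produces a subfield $L_0$ of $L$ with $L_0|K$ algebraic, $L_0|K$ linearly disjoint from $K^h|K$ (with $K^h$ the $w$-henselization of $K$ inside $L$), and $L = L_0 \cdot K^h = L_0^h$. From this I read off: $L|L_0$ is algebraic, since henselization is algebraic; $L_0 \ne L$, because $L_0 = L$ would combine with the linear-disjointness relation $L_0 \cap K^h = K$ to give $K^h = K$, contradicting the non-henselianity of $K$; and $wL_0$, as a subgroup of the archimedean group $wL$, is itself archimedean, so Lemma~\ref{rk1dense} yields that $L_0$ is dense in its henselization $L_0^h = L$ with respect to $w$. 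Lemma~\ref{densevw} then transports this density back to $(L,v)$, completing the argument.

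The step I expect to be the main obstacle, and the conceptual heart of the argument, is the invocation of Proposition~\ref{excomplh}: it is precisely the non-henselianity of the rational function field $K$, supplied by Proposition~\ref{ffnothens}, that, through the linear-disjointness clause, prevents the output $L_0$ from collapsing onto $L$. Without this separation between $K$ and its henselization the construction would produce $L_0 = L$ and be vacuous, so locating $K$ correctly (algebraic over $K$, sufficient residue data via $L'$, but manifestly non-henselian) is the essential input.
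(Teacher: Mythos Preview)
Your proof is correct and follows essentially the same route as the paper's: reduce via Lemma~\ref{densevw} to the archimedean coarsening $w=v_\Gamma$, exhibit a non-henselian purely transcendental base field, apply Proposition~\ref{excomplh} to split off $L_0$ from the henselization, and invoke Lemma~\ref{rk1dense} for density. The only difference is that the paper works directly over $\Q$ with a transcendence basis $T$ of $L|\Q$, whereas you first lift a field of representatives $L'$ for $Lw$ via Lemma~\ref{for} and take a transcendence basis over $L'$; this extra step is harmless but unnecessary for the argument.
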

\begin{proof}
By Lemma~\ref{densevw} it suffices to find a subfield $L_0$ which is
dense in $L$ with respect to $v_\Gamma\,$, and such that $L|L_0$ is
algebraic. By Lemma~\ref{henswov}, $(L,v_\Gamma)$ is henselian. Since
$\chara Lv_\Gamma=0$ and hence $\chara L=0$, $L$ contains $\Q$ and
$v_\Gamma$ is trivial on $\Q$. Pick a transcendence basis $T$ of $L|\Q$.
Since $v_\Gamma$ is non-trivial on $L$, $T$ is non-empty. We infer from
Lemma~\ref{ffnothens} that $(\Q(T),v_\Gamma)$ is not henselian. By
Proposition~\ref{excomplh}, there is an algebraic extension $L_0$ of
$\Q(T)$ within $L$ such that $L_0$ is linearly disjoint over $\Q(T)$
from the $v_\Gamma$-henselization $\Q(T)^h$ of $\Q(T)$ in $L$, and
$L=L_0.\Q(T)^h= L_0^h$. Since $(\Q(T),v_\Gamma)$ is not henselian,
$\Q(T)^h|\Q(T)$ is a proper extension. By the linear disjointness, the
same holds for $L|L_0\,$. As $\Gamma$ is the maximal proper convex
subgroup of $vL$, $v_\Gamma L \isom vL/\Gamma$ must be archimedean. Thus
by Lemma~\ref{rk1dense}, $(L_0,v_\Gamma)$ lies dense in its
henselization $(L,v_\Gamma)$. Hence by Lemma~\ref{densevw}, $(L_0,v)$
lies dense in its henselization $(L,v)$. Since $L|\Q(T)$ is algebraic,
so is $L|L_0\,$.
\end{proof}

In certain cases, even if $v$ has a coarsest non-trivial coarsening,
there will also be dense subfields $K$ such that $L|K$ is
transcendental. For instance, this is the case for $L=k((t))$ equipped
with the $t$-adic valuation $v_t\,$, where a subfield is dense in $L$ as
soon as it contains $k(t)$. On the other hand, the henselization
$k(t)^h$ of $k(t)$ w.r.t.\ $v_t$ admits $k(t)$ as a proper dense
subfield, and the extension $k(t)^h|K$ is algebraic for every subfield
$K$ which is dense in $k(t)^h$. More generally, the following holds:

\begin{proposition}
Suppose that $(L,v)$ is a valued field and that $v$ is trivial on the
prime field $k$ of $L$. If
\[\trdeg L|k\>=\>\dim_\Q(\Q\otimes vL)\>+\>\trdeg Lv|k\><\>\infty\;,\]
then $L|K$ is algebraic for every dense subfield $K$.
\end{proposition}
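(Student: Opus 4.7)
The plan is to invoke the Abhyankar inequality together with the immediacy of dense subextensions. Concretely, if $K$ is any dense subfield of $(L,v)$, the prime field $k$ of $L$ automatically lies in $K$ (as every field contains its prime field), and Lemma~\ref{denseimm} tells us that $K\subseteq L$ is immediate, so $vK=vL$ and $Kv=Lv$.

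Next, I would apply the Abhyankar inequality to the extension $K|k$, where $v$ is trivial on $k$: this states that
\[
\trdeg K|k \;\geq\; \dim_\Q(\Q\otimes vK) \,+\, \trdeg Kv|k\;.
\]
Using $vK=vL$ and $Kv=Lv$ from immediacy, the right-hand side equals $\dim_\Q(\Q\otimes vL)+\trdeg Lv|k$, which by hypothesis is precisely $\trdeg L|k$. Thus $\trdeg K|k\geq\trdeg L|k$. Combined with the obvious inequality $\trdeg K|k\leq\trdeg L|k$ (since $K\subseteq L$) and the finiteness hypothesis, we obtain equality $\trdeg K|k=\trdeg L|k<\infty$, which forces $L|K$ to be algebraic.

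The proof is quite short once the Abhyankar inequality is in hand; the only real point to check is that the hypothesis genuinely matches the form of Abhyankar's inequality. Since $v$ is trivial on $k$, we have $vk=\{0\}$ and $kv=k$, so $\dim_\Q(\Q\otimes vL/vk)=\dim_\Q(\Q\otimes vL)$ and $\trdeg Lv|kv=\trdeg Lv|k$. Thus the assumption of the proposition is exactly the statement that the extension $L|k$ achieves equality in Abhyankar's inequality (is \emph{without transcendence defect} in the Abhyankar sense), and any dense subfield $K$ inherits the same value group and residue field, hence by Abhyankar applied to $K|k$ must already account for all the transcendence degree available in $L|k$. The main (and only) obstacle is availability of Abhyankar's inequality, which is a standard result of valuation theory referenced in the preliminaries.
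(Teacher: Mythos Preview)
Your proof is correct and follows essentially the same route as the paper: invoke Lemma~\ref{denseimm} to obtain $vK=vL$ and $Kv=Lv$, then apply the Abhyankar inequality (\ref{abh}) to $K|k$ to get $\trdeg K|k\geq \trdeg L|k$, and conclude equality using $K\subseteq L$ and finiteness. The paper's argument is slightly terser but identical in substance.
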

\begin{proof}
If $K$ is a dense subfield, then by Lemma~\ref{denseimm}, $(L|K,v)$ is
an immediate extension. Hence,
\[\trdeg K|k\geq\dim_\Q (\Q\otimes vK)\>+\>\trdeg
Kv|k=\dim_\Q (\Q\otimes vL)\>+\>\trdeg Lv|k =\trdeg L|k\;,\]
whence $\trdeg K|k=\trdeg L|k$, showing that $L|K$ is algebraic.
\end{proof}
\n
Note that if $(L,v)$ is a valued field with a subfield $L_0$ on which
$v$ is trivial, and if $\trdeg L|L_0<\infty$, then in general,
\begin{equation}                            \label{abh}
\trdeg L|L_0\>\geq\>\dim_\Q(\Q\otimes vL)\>+\>\trdeg Lv|L_0\;.
\end{equation}
This is a special case of the so-called ``Abhyankar inequality''. For
a proof, see [Br], Chapter VI, \S10.3, Theorem~1. Note that $\Q\otimes
vL$ is the divisible hull of $vL$, and $\dim_\Q(\Q\otimes vL)$ is the
maximal number of rationally independent elements in $vL$.

\begin{remark}                              \label{remtce}
It can be shown that if $\chara Lv=0$, then the dense subfield $L_0$ in
Proposition~\ref{d1} can always be constructed in such a way that it
admits a truncation closed embedding into a power series field. The idea
is as follows. Since $(L,v)$ is henselian, we can use Lemma~\ref{for} to
find a field $k$ of representatives in $L$ for the residue field $Lv$.
Then we can choose a twisted cross-section as in [F]. The field $L_1$
generated over $k$ by the image of the cross-section admits a
truncation closed embedding in $k((vL))$ with a suitable factor system,
and this embedding $\iota$ can be extended to a truncation closed
embedding of $(L,v)$ in $k((vL))$ (cf.\ [F]). It is easy to show that
$L_\Gamma:=\iota^{-1}(\,\iota L\cap k((\Gamma))\,)$ is a field of
representatives for the residue field $Lv_\Gamma$ in $(L,v_\Gamma)$,
and that $\iota$ induces a truncation closed embedding of $L_\Gamma$ in
$k((\Gamma))\subset k((vL))$. This can be extended to a truncation
closed embedding of $L_2:=L_1.L_\Gamma$ which is obtained from
$L_\Gamma$ by adjoining the image of the cross-section. We note that
$(L,v_\Gamma)$ is an immediate extension of $(L_2,v_\Gamma)$. If this
extension is algebraic, then $L$ is also algebraic over the
henselization of $L_2$ (with respect to $v_\Gamma$), and by
Lemma~\ref{lostr}, the two fields must be equal. That shows that $L_2$
is dense in $(L,v_\Gamma)$ and hence in $(L,v)$, and we can take
$L_0=L_2\,$.

If $L|L_2$ is transcendental, we take a transcendence basis $S$ of
$L|L_2$ and pick $s\in S$. Then one shows as before that $L$ is the
henselization of $L_2(S)$, and also of the larger field $L_0:=
L_2(S\setminus \{s\})^h(s)$. Again, $L_0$ is dense in $(L,v)$. Following
[F], $L_2(S\setminus \{s\})^h$ admits a truncation closed embedding in
$k((vL))$. As $(L,v)$ is immediate over $(L_2,v)$, it is also immediate
over $L_2(S\setminus \{s\})^h$. Therefore, $s$ is the limit of a pseudo
Cauchy sequence in $L_2(S\setminus \{s\})^h$ without a limit in this
field. As the field is henselian of residue characteristic $0$, this
pseudo Cauchy sequence is of transcendental type. Now [F] shows that the
truncation closed embedding can be extended to $L_0\,$.
\end{remark}

\parb
Now we turn to the case where $vL$ admits no maximal proper convex
subgroup, i.e., $v$ admits no coarsest non-trivial coarsening. Such
valued fields exist:
\begin{lemma}                               \label{exvgcof}
Take any regular cardinal number $\lambda$ and any field $k$. Then there
is a valued field $(L,v)$ with residue field $k$ and such that $\lambda$
is the cofinality of the set of all proper convex subgroups of $vL$,
ordered by inclusion.
\end{lemma}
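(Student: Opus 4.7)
The idea is to first construct the desired value group $G$ as a Hahn-type lex product indexed by the ordinal $\lambda$, and then take $L$ to be the Hahn series field over $k$ with exponents in $G$. Put
\[
G \;:=\; \bigoplus_{\alpha<\lambda}\Z
\]
equipped with the reverse lexicographic order: a non-zero $g=(n_\alpha)_{\alpha<\lambda}\in G$ is declared positive iff $n_{\alpha_0}>0$, where $\alpha_0=\max\mathrm{supp}(g)$ (well-defined since every support is finite). For each ordinal $\beta\le\lambda$, set $G_\beta:=\bigoplus_{\alpha<\beta}\Z$. Each $G_\beta$ is clearly a convex subgroup of $G$, and $G_{\beta_1}\subsetneq G_{\beta_2}$ iff $\beta_1<\beta_2$.

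The key step is the converse: \emph{every} non-zero convex subgroup $C\subseteq G$ has the form $G_\beta$. Given such a $C$, let
\[
S \;:=\; \{\alpha<\lambda \,\mid\, \text{there exists } g\in C\text{ with } \max\mathrm{supp}(g)=\alpha\}\,.
\]
A direct inspection of the reverse lex order shows that if $g>0$ has $\max\mathrm{supp}(g)=\alpha_0$, then for every $\alpha<\alpha_0$ the standard basis element $e_\alpha$ (with a single $1$ at coordinate $\alpha$) satisfies $0<e_\alpha<g$; by convexity $e_\alpha\in C$, which forces every $\alpha<\alpha_0$ into $S$. Hence $S$ is a downward-closed subset of the ordinal $\lambda$, i.e.\ an ordinal $\beta\le\lambda$, and by comparing generators one sees $C=G_\beta$. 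Consequently, the proper convex subgroups of $G$ form a chain order-isomorphic to the ordinal $\lambda$; since $\lambda$ is regular, the cofinality of this chain under inclusion is $\lambda$.

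Finally, take $L:=k((G))$, the Hahn series field of formal sums $\sum_{g\in G} c_g\, t^g$ with well-ordered support and coefficients $c_g\in k$, equipped with the canonical valuation $v$ sending a non-zero such series to the minimum of its support. Then $vL=G$ and $Lv=k$, so $(L,v)$ has all the required properties. The only genuine obstacle in this plan is the classification of convex subgroups of the lex product $G$, which is the routine bookkeeping sketched above; once that is in place, the cofinality statement is immediate from the regularity of $\lambda$.
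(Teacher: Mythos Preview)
Your proof is correct and follows essentially the same approach as the paper: both construct the value group as a lexicographic product of copies of an archimedean group indexed by $\lambda$ (you use the finite-support sum $\bigoplus_{\alpha<\lambda}\Z$, the paper uses the Hahn product ${\bf H}_J\Gamma$ with $J$ the ordinal $\lambda$ under the reversed order and $\Gamma$ any archimedean group), and then take $L=k((G))$ with its canonical valuation. The only difference is cosmetic: you spell out the classification of convex subgroups of $G$ explicitly, whereas the paper just invokes the Hahn product and asserts that the proper convex subgroups have order type $\lambda$.
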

\begin{proof}
Take $J$ to be the set of all ordinal numbers $<\lambda$, endowed
with the reverse of the usual ordering. Choose any archimedean ordered
abelian group $\Gamma$. Then take $G$ to be the ordered Hahn product
${\bf H}_J \Gamma$ with index set $J$ and components $\Gamma$ (see [Fu]
or [KS] for details on Hahn products). Then the set of all proper convex
subgroups of $G$, ordered by inclusion, has order type $\lambda$
and hence has cofinality $\lambda$. Now take $(L,v)$ to be the power
series field $k((G))$ with its canonical valuation.
\end{proof}

Note that if $vL$ admits no maximal proper convex subgroup, then $vL$ is
the union of its proper convex subgroups. Indeed, if $\alpha\in vL$,
then the smallest convex subgroup $C$ of $vL$ that contains $\alpha$
($=$ the intersection of all convex subgroups containing $\alpha$)
admits a largest convex subgroup, namely the largest convex subgroup of
$vL$ that does not contain $\alpha$ ($=$ the union of all convex
subgroups not containing $\alpha$). Therefore $C\ne vL$, showing that
$C$ is a proper convex subgroup containing $\alpha$.

\begin{proposition}                         \label{d2}
Take a henselian valued field $(L,v)$ such that $vL$ admits no maximal
proper convex subgroup. Assume that $\chara L=0$. Then $L$ admits a
proper dense subfield $K$ such that $L|K$ is algebraic. If $\kappa>0$ is
any cardinal number smaller than or equal to the cofinality of the set
of convex subgroups of $vL$ ordered by inclusion, then there is also a
henselian (as well as a non-henselian) subfield $K$ dense in $L$ such
that $\trdeg L|K=\kappa$.
\end{proposition}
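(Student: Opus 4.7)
The plan is to extend the strategy of Proposition~\ref{d1} by iterating along a cofinal chain $(\Gamma_i)_{i<\lambda}$ of proper convex subgroups of $vL$, where $\lambda$ is the cofinality. By Lemma~\ref{for} I pick a field of representatives $k\subseteq L$ for $Lv$; since $\chara L=0$, $\Q\subseteq k$ and $v$ is trivial on $k$. Choosing $\alpha_i\in\Gamma_{i+1}\setminus\Gamma_i$, convexity forces these to be $\Z$-linearly (hence $\Q$-linearly) independent in $vL$---any nontrivial $\Z$-relation would, on isolating the largest-index term, contradict the convexity of $\Gamma_{j_0}$. The Abhyankar inequality~(\ref{abh}) then yields $\trdeg L|k\geq\dim_\Q(\Q\otimes vL)\geq\lambda$, so one can fix a transcendence basis $T$ of $L|k$ with $|T|\geq\lambda$.

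For the algebraic case I set $F:=k(T)$ and apply Proposition~\ref{excomplh} to $L|F$: this gives $L_0\supseteq F$ algebraic over $F$ inside $L$, linearly disjoint over $F$ from the henselization $F^h$, with $L=L_0\cdot F^h=L_0^h$. Proposition~\ref{ffnothens}~a) says $F$ is non-henselian, so $F^h\supsetneq F$, forcing $L_0\subsetneq L$. The hard part is density of $L_0$. In Proposition~\ref{d1} this came from Lemma~\ref{rk1dense} applied to the archimedean quotient $vL/\Gamma_{\max}$, which is unavailable here. Instead I use the truncation-closed embedding $\iota\colon L\hookrightarrow k((vL))$ of~[F] (cf.\ Remark~\ref{remtce}). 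The subfield $L^{(i)}:=\iota^{-1}(\iota(L)\cap k((\Gamma_i)))$ of elements whose $\iota$-image has support in $\Gamma_i$ is, by the argument of Remark~\ref{remtce}, a field of representatives for $Lv_{\Gamma_i}$ in $(L,v_{\Gamma_i})$. For $a\in L$ with $va\geq 0$ and $\alpha\in\Gamma_i$, the truncation $\iota(a)|_{<\alpha}$ lies in $\iota(L^{(i)})$ and differs from $\iota(a)$ by an element of value $\geq\alpha$. Choosing $L_0$ so as to absorb the fields $L^{(i)}$ (and re-running the excomplh construction over the enlarged base to preserve algebraicity over $F$) produces a proper dense $L_0$ with $L|L_0$ algebraic.

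For the transcendental case with $0<\kappa\leq\lambda$, write $T=T_0\sqcup T_1$ with $|T_1|=\kappa$ and $|T_0|\geq 1$ ($T$ is infinite since $\lambda\geq\aleph_0$). Inside the algebraic-case $L_0$ I look for a subfield $K$ with $L|K$ of transcendence degree $\kappa$ that is still dense. The idea is to take $K$ to be a truncation-closed subfield of $L_0$ containing $k(T_0)$ together with just enough approximations to the remaining transcendentals of $T_1$ to preserve density, while staying algebraic over $k(T_0)$---the latter being possible because, once the truncation-closed data for $(L,v_{\Gamma_i})$ is captured inside the relative algebraic closure of $k(T_0)$, density is achieved without having to adjoin the $t\in T_1$ themselves. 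The resulting $K$ is non-henselian and satisfies $\trdeg L|K=\kappa$. For a henselian dense subfield of the same transcendence degree, take $K^h$, the henselization of $K$ inside $L$ (Lemma~\ref{henseliz}); it is immediate over $K$, so $\trdeg L|K^h=\kappa$, and $K^h\subsetneq L$ because $L|K^h$ is not algebraic (transcendence degree $\kappa>0$), so Lemma~\ref{lostr} does not force equality.

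The principal obstacle is the density argument without a maximal proper convex subgroup: Lemma~\ref{rk1dense} is inapplicable, so one has to deploy the truncation-closed embedding of~[F] and to control, along a transfinite construction along $(\Gamma_i)$, both the algebraic (resp.\ transcendence-degree) structure of $K$ and the truncation-closedness of $\iota(K)$, in order that every truncation-approximation of an element of $L$ is captured by $K$.
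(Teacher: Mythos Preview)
Your proposal has a genuine gap in the density argument for the algebraic case, and the transcendental case is not actually carried out.

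For the algebraic case you apply Proposition~\ref{excomplh} over $F=k(T)$ to get $L_0\subsetneq L$, and then want to ``absorb the fields $L^{(i)}$'' into $L_0$ to make it dense. But $\bigcup_i L^{(i)}$ is exactly the union of fields of representatives for the residues $Lv_{\Gamma_i}$ along a cofinal chain---this is the field the paper calls $K'$---and the paper explicitly warns that it may happen that $L|K'$ is algebraic, \emph{or even that $L=K'$}. In that situation no proper subfield of $L$ can contain all the $L^{(i)}$, so your absorption step is impossible; and re-running Proposition~\ref{excomplh} over an enlarged base does not help, since if the base is henselian (or equal to $L$) the construction degenerates. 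The truncation-closed embedding from~[F] gives you a convenient description of $K'$ but does not, by itself, produce a \emph{proper} dense subfield.

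The paper's actual mechanism is quite different and more elementary: having built the chain $K'_\nu$ with transcendence bases $T_\nu=\{t_{\nu,\mu}\}$ of $K'_{\nu+1}|K'_\nu$, it constructs a parallel chain $K_\nu$ by systematically replacing $t_{\nu,\mu}$ by the ``telescoped'' element $t_{\nu,\mu}-t_{\nu+1,\mu}$. Each $K_\nu$ is still a field of representatives for $Lw_\nu$ (so the union $K=\bigcup K_\nu$ is dense for the same reason $K'$ is), but now $t_{0,0}\notin K$ while $t_{0,0}$ is the limit of the Cauchy sequence $(t_{0,0}-t_{\nu+1,0})_{\nu<\lambda}$ in $K$. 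This simultaneously yields density and properness, with no appeal to~[F]. The same telescoping, performed on $\lambda$-many elements at once, gives $\trdeg L|K=\lambda$, and then the intermediate $\kappa$'s are obtained by adjoining back part of a transcendence basis.

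Your transcendental paragraph does not contain a construction: saying that the truncation-closed data ``is captured inside the relative algebraic closure of $k(T_0)$'' is precisely the statement to be proved, and you give no reason why a field algebraic over $k(T_0)$ should be dense in $L$. The missing idea is again the telescoping substitution.
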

\begin{proof}
It suffices to prove that there is a subfield $K$ dense in $L$ such that
the trans\-cendence degree of $L|K$ is equal to the cofinality $\lambda$
of the set of convex subgroups of $vL$. This is seen as follows. Take a
transcendence basis $T$ of $L|K$. If $\kappa$ is a cardinal number
$\leq \lambda$, then take a subset $T_\kappa$ of $T$ of cardinality
$\kappa$. Then $K_\kappa:=K(T\setminus T_\kappa)$ is dense in $L$
because it contains $K$; furthermore, $\trdeg L|K_\kappa=\kappa$. We may
always, even in the case of $\kappa= \lambda$, choose $T_\kappa\ne T$.
Then by part a) of Proposition~\ref{ffnothens}, $(K_\kappa,v)$ is not
henselian. In particular, $(K(T),v)$ is not henselian and thus, $K(T)$
is a proper subfield of $L$ such that $L|K(T)$ is algebraic. If
$\kappa\ne 0$, then $L|K_\kappa$ will be transcendental. By
Lemma~\ref{henseliz}, the henselian field $L$ contains the henselization
$K^h_\kappa$ of $K_\kappa$. Since it is an algebraic extension of
$K_\kappa\,$, we have $\trdeg L|K^h_\kappa=\trdeg L|K_\kappa=\kappa$,
and it is dense in $L$, too.

To illustrate the idea of our proof, we first show that there is a dense
subfield $K$ such that $\trdeg L|K>0$. We choose a convex subgroup $C_0$
of $vL$ as follows. If $\chara Lv=0$, then we set $C_0=\{0\}$. If
$\chara Lv=p>0$, then we observe that $0\ne p\in L$ since $\chara L=0$,
so we may take $C_0$ to be the smallest proper convex subgroup that
contains $vp$. We let $w_0=v_{C_0}$ be the coarsening of $v$ associated
with $C_0\,$. We have $w_0=v$ if $\chara Lv=0$. Since $C_0$ is a proper
convex subgroup, $w_0$ is a non-trivial valuation.

Let $\lambda$ be the cofinality of the set of all proper convex
subgroups of $vL$, ordered by inclusion. Starting from $C_0\,$, we pick
a strictly ascending cofinal sequence of convex subgroups $C_{\nu}$,
$\nu<\lambda$, in this set. We denote by $w_{\nu}$ the coarsening of $v$
which corresponds to $C_\nu\,$.

By Lemma~\ref{for} there is a field $K'_0$ of representatives for
$Lw_0$ in $L$. We pick a transcendence basis $T_0=\{t_{0,\mu}\mid \mu<
\kappa_0\}$ of $K'_0|\Q$, where $\kappa_0$ is the transcendence degree
of $Lw_0|\Q$.
Then we proceed by induction on $\nu<\lambda$. Suppose we have already
constructed a field $K'_{\nu}$ of representatives of $Lw_{\nu}\,$ and a
transcendence basis $\bigcup_{\nu'\leq\nu}T_{\nu'}$ for it. By
Lemma~\ref{for}, $K'_{\nu}$ can be extended to a field $K'_{\nu+1}$ of
representatives of $Lw_{\nu+1}\,$, and we choose a transcendence basis
$T_{\nu+1}=\{t_{\nu+1,\mu}\mid \mu<\kappa_{\nu+1}\}$ of $K'_{\nu+1}|
K'_{\nu}$.
Having constructed $K'_{\nu}$, $\nu<\lambda'$ for some limit
ordinal $\lambda'\leq\lambda$, we set $K^*_{\lambda'}=
\bigcup_{\nu<\lambda'} K'_{\nu}\,$. Again by Lemma~\ref{for},
$K^*_{\lambda'}$ can be extended to a field of representatives
$K'_{\lambda'}$ of $Lw_{\lambda'}\,$, and we choose a transcendence
basis $T_{\lambda'}=\{t_{\lambda',\mu}\mid\mu<\kappa_{\lambda'}\}$ of
$K'_{\lambda'}|K^*_{\lambda'}$. Note that $T_{\lambda'}$ may be empty.

We set $K'=\bigcup_{\nu<\lambda} K'_{\nu}$ and show that $K'$ is dense
in $L$. Take any $a\in L$ and $\alpha\in vL$. Then there is some
$\nu<\lambda$ such that $\alpha\in C_{\nu}\,$. By construction, $K'$
contains a field of representatives for $Lw_{\nu}\,$. Hence there is
some $b\in K'$ such that $aw_{\nu}=bw_{\nu}\,$, meaning that
$w_{\nu}(a-b)>0$ and thus, $v(a-b)>\alpha$. This proves our claim.
Hence if $\trdeg L|K'>0$, we set $K=K'$ and we are done showing the
existence of a subfield $K$ with $\trdeg L|K>0$. But it may well
happen that $L|K'$ is algebraic, or even that $L=K'$. In this case, we
construct a subfield $K$ of $K'$ as follows.

Note that for all $\nu<\lambda$, $(K'_\nu,v)$ is henselian. Indeed, it
is isomorphic (by the place associated with $w_\nu$) to
$(Lw_{\nu},\ovl{w}_\nu)$, where $\ovl{w}_\nu$ is the valuation induced
by $v$ on $Lw_{\nu}\,$; since $(L,v)$ is henselian, Lemma~\ref{henswov}
shows that the same is true for $(Lw_{\nu},\ovl{w}_\nu)$ and hence for
$(K'_\nu,v)$. Again from Lemma~\ref{henswov} it follows that $(K'_\nu,
w_{\mu})$ is henselian for all $\mu<\lambda$. Note that $w_{\mu}$ is
non-trivial on $K'_\nu$ only for $\mu<\nu$, and in this case, $K'_\nu
w_\mu=Lw_\mu$ since $K'_\nu$ contains the field $K'_\mu$ of
representatives for $Lw_\mu\,$.

Note further that for all $\nu<\lambda$ and all $\mu<\kappa_\nu\,$,
$w_\nu t_{\nu,\mu}=0$. On the other hand, after multiplication with
suitable elements in $K_{\nu+1}$ we may assume that $w_{\nu}
t_{\nu+1,\mu}>0$ for all $\mu<\kappa_{\nu+1}\,$.

We will now construct inside of $K'$ a chain (ordered by inclusion) of
subfields $K_{\nu}\subset K'_{\nu}$ ($\nu<\lambda$) such that each
$K_{\nu}$ is a field of representatives for $Lw_{\nu}$ and contains the
element $t_{0,0}-t_{\nu+1,0}$, but not the element $t_{0,0}\,$.

Since $T_0=\{t_{0,\mu}\mid \mu< \kappa_0\}$ is a transcendence basis of
$K'_0|\Q$, Lemma~\ref{aaa} shows that the residue field $K'_1w_0=
K'_0w_0$ is algebraic over $\Q(t_{0,\mu}\mid \mu<\kappa_0)w_0\,$.
Because $(t_{0,0}-t_{1,0})w_0= t_{0,0}w_0$ by construction, the latter
field is equal to $\Q(t_{0,0}-t_{1,0},t_{0,\mu}\mid
1\leq\mu<\kappa_0)w_0\,$.
Since $\chara K'_1 w_0=0$, we can use Lemma~\ref{for} to find inside of
the henselian field $(K'_1,w_0)$ an algebraic extension $K_0$ of
$\Q(t_{0,0} -t_{1,0},t_{0,\mu}\mid 1\leq\mu<\kappa_0)$ which is a field
of representatives for $K'_1w_0=Lw_0\,$. Note that $t_{0,0}$ is
transcendental over $\Q(t_{0,0} -t_{1,0},t_{0,\mu}\mid
1\leq\mu<\kappa_0)$ and therefore, $t_{0,0}\notin K_0\,$, but
$t_{0,0} -t_{1,0}\in K_0\,$.

Suppose we have already constructed all fields $K_{\mu}$ for $\mu\leq
\nu$, where $\nu$ is some ordinal $<\lambda$. Since $T_{\nu+1}=
\{t_{\nu+1,\mu}\mid \mu<\kappa_{\nu+1}\}$ is a transcendence basis of
$K'_{\nu+1}|K'_\nu\,$, Lemma~\ref{aaa} shows that the residue field
$K'_{\nu+2}w_{\nu+1}=K'_{\nu+1}w_{\nu+1}$ is algebraic over $K_{\nu}
(t_{\nu+1,\mu}\mid \mu<\kappa_{\nu+1}) w_{\nu+1}\,$. Because
$(t_{\nu+1,0}-t_{\nu+2,0})w_{\nu+1}=t_{\nu+1,0} w_{\nu+1}$ by
construction, the latter field is equal to $K_{\nu} (t_{\nu+1,0}-
t_{\nu+2,0}, t_{\nu+1,\mu}\mid 1\leq\mu<\kappa_{\nu+1}) w_{\nu+1}\,$.
Since $\chara K'_{\nu+2} w_{\nu+1}=0$, we can use Lemma~\ref{for} to
find inside of the henselian field $(K'_{\nu+2},w_{\nu+1})$ an algebraic
extension $K_{\nu+1}$ of $K_{\nu}(t_{\nu+1,0}- t_{\nu+2,0},
t_{\nu+1,\mu} \mid 1\leq\mu<\kappa_{\nu+1})$ which is a field of
representatives for $K'_{\nu+2}w_{\nu+1}= Lw_{\nu+1}\,$. Since
$t_{0,0}-t_{\nu+1,0}\,,\, t_{\nu+1,0}-t_{\nu+2,0}\in K_{\nu+1}$ we have
that $t_{0,0}-t_{\nu+2,0} \in K_{\nu+1}\,$. Again, $t_{0,0} \notin
K_{\nu+1}$ as $t_{0,0}$ is transcendental over $K_{\nu} (t_{\nu+1,0} -
t_{\nu+2,0}, t_{\nu+1,\mu}\mid 1\leq\mu<\kappa_{\nu+1})$.

Suppose we have already constructed all fields $K_{\nu}$ for
$\nu< \lambda'$, where $\lambda'$ is some limit ordinal $\leq\lambda$.
We note that $t_{0,0}\notin \bigcup_{\nu<\lambda'}^{} K_{\nu}=:
K_{\lambda'}^{**}$. But $K_{\lambda'}^{**}(t_{0,0})$ contains the entire
transcendence basis of $K_{\lambda'}^*|\Q$ because $t_{0,0}-t_{\nu+1,0}
\in K_{\nu}$ for every $\nu<\lambda'$ (recall that $K_{\lambda'}^*$ is
the field we constructed above before constructing $K'_{\lambda'}$). It
follows that $T_{\lambda'}\cup \{t_{0,0}\}$ is a transcendence basis of
$K'_{\lambda'}| K^{**}_{\lambda'}$, and therefore the residue field
$K'_{\lambda'+1} w_{\lambda'}=Lw_{\lambda'}$ is an algebraic extension
of $K^{**}_{\lambda'} (T_{\lambda'}\cup\{t_{0,0}\})w_{\lambda'}\,$.
Because $(t_{0,0}-t_{\lambda'+1,0}) w_{\lambda'}= t_{0,0}w_{\lambda'}$,
the latter field is equal to $K^{**}_{\lambda'}(T_{\lambda'}\cup
\{t_{0,0} -t_{\lambda'+1,0}\}) w_{\lambda'}\,$. Again by
Lemma~\ref{for}, there is an algebraic extension $K_{\lambda'}$ of
$K^{**}_{\lambda'}(T_{\lambda'} \cup \{t_{0,0} -t_{\lambda'+1,0}\})$
inside of the henselian field $(K'_{\lambda'+1}, w_{\lambda'})$ which is
a field of representatives for $K'_{\lambda'+1}w_{\lambda'}=
Lw_{\lambda'}\,$. By construction, $t_{0,0} -t_{\lambda'+1,0}\in
K_{\lambda'}\,$. As before, $t_{0,0} \notin K_{\lambda'}$ as $t_{0,0}$
is transcendental over $K^{**}_{\lambda'}(T_{\lambda'} \cup \{t_{0,0}
-t_{\lambda'+1,0}\})$.

We set
\begin{equation}                            \label{K=U}
K\;:=\; \bigcup_{\nu<\lambda} K_{\nu}\;.
\end{equation}
By construction, $t_{0,0}\notin K$, but $K(t_{0,0})$ contains
$t_{\nu,\mu}$ for all $\nu<\lambda$ and $\mu< \kappa_\nu\,$. Hence,
$K'|K(t_{0,0})$ is algebraic and therefore, $\trdeg K'|K=1$. With the
same argument as for $K'$, one shows that $K$ is dense in $L$. (This
also follows from the fact that $t_{0,0}$ is limit of the Cauchy
sequence $(t_{0,0}-t_{\nu+1,0})_{\nu<\lambda}$ in $K$ and $K'$ is dense
in $L$.)

\parm
Now we indicate how to achieve $\trdeg K'|K=\lambda$. By passing to a
cofinal subsequence of $(C_\nu)_{\nu<\lambda}$ if necessary, we can
assume that every $T_{\nu}$ contains at least $|\nu|$ many elements,
where $|\nu|$ denotes the cardinality of the ordinal number $\nu$. Then
it is possible to re-order the elements of $T_\nu$ in such a way that
$T_\nu=\{t_{\nu,\mu}\mid \mu<\mu_\nu\}$ where $\mu_\nu$ is some ordinal
number $\geq\nu$. Now we modify the above construction of $K$ as
follows: at every step $\nu$ where $\nu=0$ or $\nu$ is a successor
ordinal, we replace $t_{\nu,\mu}$ by $t_{\nu,\mu}-t_{\nu+1,\mu}$ for all
$\mu\leq\nu$. In the limit case for $\lambda'<\lambda$, we then have
that $T_{\lambda'}\cup \{t_{\nu,\nu}\mid\nu<\lambda'\}$ is a
transcendence basis of $K'_{\lambda'}| K^{**}_{\lambda'}$. Here, we
replace every $t_{\nu,\nu}$ for $\nu<\lambda'$ by $t_{\nu,\nu}-
t_{\lambda'+1,\nu}$. In this way we achieve that the elements
$t_{\nu,\nu}\,$, $\nu<\lambda$ will be algebraically independent over
$K$, but $K$ will still be dense in $L$.
\end{proof}

\begin{remark}                              \label{r2}
We can replace the field $K_\kappa=K(T\setminus T_\kappa)$ mentioned in
the first paragraph of the proof by the larger field $K(T\setminus
T_\kappa \setminus \{t\})^h(t)$ where $t\in T\setminus T_\kappa$.
By the same argument as given at the end of Remark~\ref{remtce}, this
field admits a truncation closed embedding into the corresponding power
series field.
\end{remark}

\pars
Propositions~\ref{d1} and~\ref{d2} together prove Theorem~\ref{MT1}.
Theorem~\ref{MT2} follows immediately from Proposition~\ref{d2} since if
$vL$ admits no maximal proper convex subgroup, then the cofinality of
the set of convex subgroups of $vL$ is an infinite cardinal number. It
remains to give the
\sn
{\bf Proof of Proposition~\ref{cor1}:} \
By Lemma~\ref{exvgcof} we may take a henselian valued field $(L,v)$ of
residue characteristic $0$ such that $vL$ admits no maximal proper
convex subgroup. Using Proposition~\ref{d2} we pick a non-henselian
proper subfield $K$ which is dense in $L$. Lemma~\ref{densevw} shows
that for every non-trivial coarsening $w$ of $v$, $(K,w)$ is dense in
$(L,w)$, whence $Kw=Lw$. By Lemma~\ref{henswov}, $(Lw,\ovl{w})$ is
henselian because $(L,v)$ is henselian and $v=w\circ \ovl{w}$. Hence,
$(Kw,\ovl{w})$ is henselian, which finishes our proof.        \QED

\begin{example}                              \label{cenothens}
A more direct construction of a counterexample works as follows: Take an
ascending chain of convex subgroups $C_i\,$, $i\in\N$, in some ordered
abelian group. Take $k$ to be any field and set
\[K\>:=\>\bigcup_{i\in\N} k((C_i))\;.\]
As a union of an ascending chain of henselian valued fields, $K$ is
itself a henselian valued field. But $K$ is not complete. For
instance, if $0<\alpha_i\in C_i\setminus C_{i-1}\,$, then the element
\[x\>:=\>\sum_{i\in\N} t^{\alpha_i}\in k((\bigcup_{i\in\N} C_i))\]
lies in the completion of $K$, but not in $K$. Since every
henselian field is separable-algebraically closed in its henselization
(cf.\ [W], Theorem 32.19), $x$ is either transcendental or purely
inseparable over $K$. But it cannot be purely inseparable over $K$
because if $p=\chara K>0$, then $x^{p^\nu}=\sum_{i\in\N}
t^{p^\nu\alpha_i}\notin K$ for all $\nu\geq 0$. Hence by part a) of
Proposition~\ref{ffnothens}, $K(x)$ (endowed with the restriction $v$ of
the valuation of the completion of $K$) is not henselian. But for every
non-trivial coarsening $w$ of $v$, $K(x)w=Kw$ since $K$ is dense in
$(K(x),v)$, and we leave it to the reader to prove that $(Kw,\ovl{w})$
is henselian.
\end{example}

%
%
\section{Small integer parts}            \label{sectabsir}
We will use a cardinality argument to show that there are real
closed fields that are larger than the quotient fields of all its
integer parts.

\begin{lemma}                               \label{eqcard}
a) \ Take any valued field $(L,v)$. Then all additive complements
of the valuation ring of $L$, if there are any, have the same
cardinality.
\sn
b) \ All integer parts in an ordered field, if there are any, are
isomorphic as ordered sets and thus have the same cardinality.
\end{lemma}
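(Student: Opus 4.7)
The plan is to handle the two parts by separate short structural arguments.

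For part (a), I would use that, by the very definition of ``additive complement'', any such $R$ is a set of additive representatives for the quotient $L/\mathcal{O}_v$, so the restriction to $R$ of the canonical projection $L\to L/\mathcal{O}_v$ is an isomorphism of abelian groups. Since $L/\mathcal{O}_v$ does not depend on the choice of $R$, this gives $R_1\simeq R_2$ as abelian groups (and in particular $|R_1|=|R_2|$) for any two additive complements. This is essentially a one-line argument and presents no difficulty.

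For part (b), given two integer parts $I_1,I_2\subseteq K$, the plan is to exhibit an explicit order isomorphism. Write $\lfloor a\rfloor_I$ for the unique $r\in I$ with $r\leq a<r+1$, and define $\phi:I_1\to I_2$ by $\phi(a):=\lfloor a\rfloor_{I_2}$. I would then verify that $\phi$ is a strictly order-preserving bijection. Strict monotonicity will follow from the fact, noted in the introduction, that each $I_i$ is discretely ordered with least positive element $1$: for $a<a'$ in $I_1$ we have $a+1\leq a'$, and hence $\phi(a)\leq a\leq a'-1<\phi(a')$, using $\phi(a')>a'-1$. Surjectivity will come from producing, for each $b\in I_2$, an element $a\in I_1$ with $b\leq a<b+1$; the uniqueness clause in the definition of the $I_2$-floor then forces $\phi(a)=b$. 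To find such an $a$, set $r:=\lfloor b\rfloor_{I_1}\in I_1$: if $r=b$, take $a=b$; otherwise $r<b<r+1$, and $a:=r+1\in I_1$ lies strictly between $b$ and $b+1$.

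A strictly order-preserving bijection between totally ordered sets is automatically an isomorphism of ordered sets, so this yields $I_1\simeq I_2$ as ordered sets, and the equality of cardinalities is then immediate. The only step that requires genuine care is the surjectivity clause, and specifically the observation that one can shift the $I_1$-floor of $b$ upward by $1$ whenever $b\notin I_1$; this is elementary, so I do not anticipate any real obstacle.
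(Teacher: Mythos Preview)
Your proof is correct and follows essentially the same approach as the paper: part~(a) is identical (any additive complement is group-isomorphic to $L/\mathcal{O}_v$), and for part~(b) both you and the paper use the floor map $I_1\to I_2$, $a\mapsto \lfloor a\rfloor_{I_2}$, as the order isomorphism. Your verification is organized slightly differently---you prove strict monotonicity and then surjectivity separately, while the paper recovers $a$ from $a'=\phi(a)$ via the relation $a''<a'\leq a''+1$ in $I_1$ to get injectivity and surjectivity at once---but these are minor variations on the same argument.
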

\begin{proof}
a): \ As an additive group, any additive complement of the valuation
ring ${\cal O}$ of $L$ is isomorphic to $L/{\cal O}$.
\sn
b): \ Take two integer parts $I_1$ and $I_2$ of a given ordered field
$(L,<)$. Since $I_2$ is an integer part, for every $a\in I_1$ there is a
unique element $a'\in I_2$ such that $a'\leq a<a'+1$. Hence, we have a
mapping $I_1\ni a\mapsto a'\in I_2\,$. Conversely, since $I_1$ is an
integer part, there is a unique $a''\in I_1$ such that $a''< a' \leq
a''+1$. Consequently, $a=a''+1$ and $a$ is the only element that is
sent to $a'$, showing that the map is injective and even order
preserving. On the other hand, since $a''+1$ is sent to $a'$, the
mapping is also proved to be onto.
\end{proof}

We also need the following facts, which are well known (note that a
similar statement holds for weak complements):
\begin{lemma}                               \label{densacip}
a) If $K$ is dense in $(L,v)$, then every additive complement of the
valuation ring of $(K,v)$ is also an additive complement of the
valuation ring of $(L,v)$.
\sn
b) If $K$ is dense in $(L,<)$, then every integer part of $(K,<)$ is
also an integer part of $(L,<)$.
\end{lemma}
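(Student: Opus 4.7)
The plan is, in each part, to verify that the object already known to satisfy the defining conditions over $K$ also satisfies them over $L$, using density to approximate each $a\in L$ by an element of $K$ and then invoking the property over $K$.

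For part (a), let $R$ be an additive complement of the valuation ring ${\cal O}_K$ of $K$, so $K=R\oplus {\cal O}_K$ as additive groups. I would first show $R\cap {\cal O}_L=\{0\}$: any $r\in R$ with $v(r)\geq 0$ lies in $K\cap {\cal O}_L={\cal O}_K$, forcing $r\in R\cap {\cal O}_K=\{0\}$. For the sum, given $a\in L$, density applied with $\alpha=0\in vL$ supplies $b\in K$ with $v(a-b)\geq 0$. Writing $b=r+c$ with $r\in R$ and $c\in {\cal O}_K$ then yields $a=r+(c+(a-b))$ with $c+(a-b)\in {\cal O}_L$, so $L=R+{\cal O}_L$.

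For part (b), let $I$ be an integer part of $(K,<)$. Since the ordering on $K$ is the restriction of the ordering on $L$, the discreteness of $I$ and the fact that $1$ is its least positive element are inherited automatically (they are properties internal to $I$ as an ordered subset). The remaining condition to check is that every $a\in L$ lies in some interval $[r,r+1)$ with $r\in I$. By density in the ordering, pick $b\in K$ with $a<b<a+1$, and choose $r^{*}\in I$ with $r^{*}\leq b<r^{*}+1$. Combining these inequalities gives $r^{*}\leq b<a+1$ and $a<b<r^{*}+1$, hence $a-1<r^{*}<a+1$. If $r^{*}\leq a$, then $r^{*}\leq a<r^{*}+1$ as desired; otherwise $a<r^{*}<a+1$, and $r^{*}-1\in I$ satisfies $r^{*}-1<a<r^{*}=(r^{*}-1)+1$.

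I do not anticipate any serious obstacle; both arguments are short verifications built directly from the density hypothesis. The only minor subtlety is the case distinction in part (b), where the element of $I$ attached to the approximation $b\in K$ might sit just above $a$ and need to be shifted down by $1$ to produce the correct integer part of $a$.
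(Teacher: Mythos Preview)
Your proof of part (a) is correct and essentially identical to the paper's: both show $R\cap{\cal O}_L=\{0\}$ via $K\cap{\cal O}_L={\cal O}_K$, then use density with $\alpha=0$ to write any $a\in L$ as an element of $R+{\cal O}_L$. The paper leaves part (b) to the reader; your argument for (b) is a correct and natural way to fill that gap, including the case split when the integer part of the approximant $b$ lands just above $a$.
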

\begin{proof}
We only prove a) and leave the proof of b) to the reader. Let $A$ be an
additive complement of the valuation ring ${\cal O}_K$ of $(K,v)$, that
is, $A\cap {\cal O}_K=\{0\}$ and $A+{\cal O}_K=K$. Denote the valuation
ring of $(L,v)$ by ${\cal O}_L\,$. Since the valuation on $L$ is an
extension of the valuation on $K$, we have that $K\cap {\cal O}_L
={\cal O}_K$ and thus, $A\cap {\cal O}_L=A\cap {\cal O}_K=\{0\}$. Now
take any $a\in L$. Since $K$ is dense in $(L,v)$, there is $b\in K$ such
that $v(a-b)\geq 0$, that is, $a-b\in {\cal O}_L\,$. Consequently,
$a=b+(a-b)\in K+{\cal O}_L=A+{\cal O}_K+{\cal O}_L=A+{\cal O}_L\,$. This
proves that $A+{\cal O}_L=L$.
\end{proof}

\pars
We cite the following fact; for a proof, see for instance [B--K--K].
\begin{lemma}                               \label{c+z}
If $K$ is an ordered field and $R$ is a subring which is an additive
complement of the valuation ring for the natural valuation of $K$, then
$R+\Z$ is an integer part of $K$.
\end{lemma}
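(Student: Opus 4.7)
The plan is to verify the three defining properties of an integer part in turn: that $R+\Z$ is a subring containing $1$, that every element of $K$ has a ``floor'' in $R+\Z$, and that $1$ is the least positive element of $R+\Z$. Let $v$ be the natural valuation of $K$, $\mathcal{O}$ its valuation ring, $\mathcal{M}$ its maximal ideal, and recall that $Kv$ is archimedean ordered (so I may view $\Z\hookrightarrow Kv \hookrightarrow \R$). The key preliminary fact I will use throughout is the following compatibility observation: if $c\in\mathcal{O}$ satisfies $cv>0$ in $Kv$, then $c>0$ in $K$, because the ordering of $Kv$ is induced from that of $K$.

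First I would note that $\Z\subseteq\mathcal{O}$ (since $v$ is trivial on $\Z$) and that $R+\Z$ is a subring of $K$ containing $1$: $R$ is already a subring, and for $r\in R$ and $n\in\Z$ the element $nr$ lies in $R$ since $R$ is an additive subgroup, so $R+\Z$ is closed under multiplication. Since $R\cap\mathcal{O}=\{0\}$ and $\Z\subset\mathcal{O}$, the decomposition $s=r+n$ of an element of $R+\Z$ is unique.

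Next I would prove the floor property. Given $a\in K$, use $R+\mathcal{O}=K$ to write $a=r+b$ with $r\in R$ and $b\in\mathcal{O}$. I claim there is a unique $n\in\Z$ with $n\le b<n+1$. For existence, choose an integer $m$ with $m>|bv|$ in $\R$; then both $(m-b)v>0$ and $(m+b)v>0$ in $Kv$, so by the compatibility observation $-m<b<m$. Thus $\{k\in\Z:k\le b\}$ is nonempty and bounded above, hence has a maximum $n$, which automatically satisfies $n\le b<n+1$. Setting $s=r+n\in R+\Z$ gives $s\le a<s+1$.

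Finally, to see that the discrete-ordering requirement holds (with $1$ the least positive element), take $s=r+n>0$ in $R+\Z$. If $r=0$ then $s=n\ge 1$. If $r\ne 0$ then $r\notin\mathcal{O}$, i.e.\ $vr<0$; by convexity of $\mathcal{O}$, $|r|$ exceeds every element of $\mathcal{O}_{>0}$, in particular $|r|>|n|+1$, so $|s|=|r+n|>1$, whence $s>1$. Thus $1$ is indeed the least positive element, which for an ordered ring containing $1$ is exactly the discreteness condition, and together with the floor property established above this shows $R+\Z$ is an integer part of $K$. No single step is a serious obstacle; the only point requiring care is the lift-the-floor argument, which depends crucially on the convexity of $\mathcal{O}$ under $<$ and the archimedean property of $Kv$.
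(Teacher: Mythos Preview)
Your proof is correct. The paper itself does not prove this lemma: it merely cites it as a known fact, referring to [B--K--K] for a proof. So there is nothing to compare against; your self-contained argument verifying the subring property, the floor property via the archimedean residue field, and discreteness via $R\cap\mathcal{O}=\{0\}$ and convexity of $\mathcal{O}$ fills in exactly what the paper omits.
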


For every ordered abelian group $G$, written additively, we set
\[G^{<0}\;:=\;\{g\in G\mid g<0\}\;.\]

\begin{proposition}                         \label{propPSF}
Suppose that $k$ is a countable field. Then the countable ring
$k[\Q^{<0}]:=k[t^g\mid 0>g\in \Q]\subset k((\Q))$ is an additive
complement of the valuation ring of the uncountable henselian valued
field $\PSF(k)$. The same remains true if $\PSF(k)$ is replaced by its
completion.

If in addition $k$ is an ordered (respectively, real closed) field, then
$k[\Q^{<0}]+\Z$ is an integer part of the ordered (respectively, real
closed) field $\PSF(k)$, and this also remains true if $\PSF(k)$ is
replaced by its completion.
\end{proposition}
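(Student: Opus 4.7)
The plan is to check the three required assertions in order: uncountability of $\PSF(k)$ (together with countability of $R := k[\Q^{<0}]$), the additive-complement property, and finally, in the ordered and real closed cases, the integer part property via Lemma~\ref{c+z}. Uncountability is immediate because $\PSF(k) \supseteq k((t))$ has cardinality $|k|^{\aleph_0} = 2^{\aleph_0}$, while $R$, as the set of finite $k$-linear combinations of $t^g$ with $g \in \Q^{<0}$, is clearly countable. The containment $R \cap {\cal O} = \{0\}$ is equally easy: every non-zero $r \in R$ has a well-defined smallest exponent $g < 0$ with non-zero coefficient, so $v(r) = g < 0$ and $r \notin {\cal O}$.

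For the non-trivial half, $R + {\cal O} = \PSF(k)$, the key observation is that any Puiseux series $f \in \PSF(k)$ lies in $k((t^{1/n}))$ for some $n \geq 1$, so its support is a well-ordered (equivalently, bounded-below) subset of $\frac{1}{n}\Z$; the portion of the support below $0$ is then bounded both below and above inside the discrete group $\frac{1}{n}\Z$, hence finite. Splitting $f = f^- + f^+$ according to the sign of the exponent gives $f^- \in R$ and $f^+ \in {\cal O}$. To extend this to the completion, one takes a Cauchy sequence $(f_N)$ of Puiseux series representing $f$ and applies the same splitting $f_N = f_N^- + f_N^+$. For large $N, M$ the condition $v(f_N - f_M) > 0$ forces $f_N^- - f_M^- \in R \cap {\cal O} = \{0\}$, so the negative parts stabilize to an element $f^- \in R$; then $f - f^-$ is the limit of the $f_N^+$ and therefore lies in the valuation ring of the completion. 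This stabilization is the one point needing genuine care.

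For the integer part assertions, assume further (as is needed for the argument below and as is the case for the intended application $k = \Q^{\rm rc}$) that $k$ is archimedean ordered. Then the residue field of the $t$-adic valuation on $\PSF(k)$, and on its completion, is the archimedean field $k$, so the $t$-adic valuation is the natural valuation of $\PSF(k)$ (respectively of its completion). Lemma~\ref{c+z} then yields directly that $R + \Z$ is an integer part. In the real closed case one uses in addition the classical fact that $\PSF(k)$ is real closed when $k$ is, and that the same holds for its completion, being a henselian valued field with divisible value group and real closed residue field.
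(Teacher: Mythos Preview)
Your proof is correct and follows essentially the same line as the paper's: reduce the additive-complement property to the Laurent decomposition inside some $k((t^{1/n}))$, then invoke Lemma~\ref{c+z} for the integer-part assertion. The only differences are that for the completion you argue directly via stabilization of the negative parts of a Cauchy sequence, whereas the paper simply cites Lemma~\ref{densacip}; and that you correctly add the hypothesis that $k$ be archimedean so that the $t$-adic valuation is the \emph{natural} one, as Lemma~\ref{c+z} requires --- a point the paper's statement and proof gloss over. One small omission on your side: the proposition also asserts that $\PSF(k)$ is henselian, which the paper verifies (as the union of the henselian fields $k((t^{1/n}))$) and which you use at the end but do not check.
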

\begin{proof}
It is well known that every field $k((t))$ of formal Laurent series is
uncountable. Hence, $\PSF(k)$ is uncountable. As the union of an
ascending chain of fields $k((t^{\frac{1}{n}}))$ of formal Laurent
series, which are henselian, $\PSF(k)$ is itself henselian. Note that
the completion of a henselian field is again henselian ([W], Theorem
32.19).

Every element $a\in\PSF(k)$ lies in $k((t^{\frac{1}{n}}))$ for some
$n\in\N$. Hence, it suffices to show that $k[t^{\frac{m}{n}}\mid 0>m\in
\Z]$ is an additive complement of the valuation ring
$k[[t^{\frac{1}{n}}]]$ in $k((t^{\frac{1}{n}}))$. Renaming
$t^{\frac{1}{n}}$ by $t$, we thus have to show that $k[t^m\mid 0>m\in
\Z]$ is an additive complement of the valuation ring $k[[t]]$ in
$k((t))$. But this is clear since $k((t))$ is the set of formal Laurent
series
\[\sum_{i=N}^{\infty} c_i t^i \;=\; \sum_{i=N}^{-1} c_i t^i \>+\>
\sum_{i=0}^{\infty} c_i t^i\]
where $N\in\Z$ and $c_i\in k$. The first sum lies in $k[t^m\mid 0>m\in
\Z]$ and the second sum in $k[[t]]$.

Part a) of Lemma~\ref{densacip} shows that $k[\Q^{<0}]$ is also
an additive complement of the valuation ring in the completion of
$\PSF(k)$.

The assertions about the ordered case follow from Lemma~\ref{c+z}
together with part b) of Lemma~\ref{densacip}.
\end{proof}

From this proposition together with Lemma~\ref{eqcard}, we obtain the
following corollary, which in turn proves Theorem~\ref{MT4}.
\begin{corollary}                           \label{25}
Suppose that $k$ is a countable field. If $R$ is any subring which is an
additive complement of the valuation ring of $\PSF(k)$, then $\Quot R$
is countable and $\trdeg \PSF(k)|\Quot R$ is uncountable.

If in addition $k$ is an ordered field and $I$ an integer part of
$\PSF(k)$, then $\Quot I$ is countable and $\trdeg \PSF(k)|\Quot I$ is
uncountable.

The same remains true if $\PSF(k)$ is replaced by its completion.
\end{corollary}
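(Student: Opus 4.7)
The plan is to combine the equal-cardinality statements in Lemma~\ref{eqcard} with the explicit additive complement and integer part produced in Proposition~\ref{propPSF}, and then run a simple cardinality count against the fact that $\PSF(k)$ (and its completion) is uncountable.

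First I would observe that $k[\Q^{<0}]$ is a countable ring: $\Q^{<0}$ is countable, so the set of monomials $t^g$ with $g\in\Q^{<0}$ is countable, hence so is the polynomial ring $k[t^g\mid 0>g\in\Q]$ over the countable field $k$. Proposition~\ref{propPSF} tells us that this countable ring is an additive complement of the valuation ring of $\PSF(k)$ (and of its completion). Now if $R$ is any subring of $\PSF(k)$ which is an additive complement of the valuation ring, Lemma~\ref{eqcard}(a) says $R$ has the same cardinality as $k[\Q^{<0}]$, so $R$ is countable. The quotient field of a countable domain is countable, so $\Quot R$ is countable.

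Next I would show $\trdeg \PSF(k)|\Quot R$ is uncountable. By the proof of Proposition~\ref{propPSF}, $\PSF(k)$ is uncountable (it contains the uncountable field $k((t))$). Suppose for a contradiction that $\trdeg \PSF(k)|\Quot R$ were countable; choose a transcendence basis $B$ of $\PSF(k)$ over $\Quot R$. Then $\Quot R(B)$ is still countable (a purely transcendental extension of countable transcendence degree of a countable field is countable), and $\PSF(k)|\Quot R(B)$ is algebraic, so $\PSF(k)$ is countable, a contradiction. Hence the transcendence degree must be uncountable. The same argument works verbatim when $\PSF(k)$ is replaced by its completion, since the completion is also uncountable (it contains $\PSF(k)$) and Proposition~\ref{propPSF} supplies the same countable additive complement.

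For the ordered case, Proposition~\ref{propPSF} provides the integer part $k[\Q^{<0}]+\Z$, which is still countable. If $I$ is any integer part of $\PSF(k)$, Lemma~\ref{eqcard}(b) gives a bijection between $I$ and $k[\Q^{<0}]+\Z$, so $I$ is countable, hence so is $\Quot I$; the transcendence degree argument from the previous paragraph then applies without change. The same passage to the completion works here too, since $\PSF(k)$ is dense in its completion and Proposition~\ref{propPSF} directly gives an integer part of the completion. No step looks like a genuine obstacle; the only point that requires any care is verifying that the ring of fractions (respectively, the integer part) remains countable, and then being explicit that an algebraic extension of a countable field with countably many new transcendentals cannot exhaust the uncountable field $\PSF(k)$.
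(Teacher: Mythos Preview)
Your proof is correct and follows essentially the same approach as the paper: invoke Lemma~\ref{eqcard} together with the explicit countable complement (respectively, integer part) from Proposition~\ref{propPSF} to force $R$ (respectively, $I$) to be countable, and then run the cardinality contradiction on a putative countable transcendence basis. The paper's version is terser but the logical content is identical.
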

\begin{proof}
The quotient field of a countable ring is again countable. So it only
remains to prove the assertion about the transcendence degree. It
follows from the fact that the algebraic closure of a countable field is
again countable. So if $T$ would be a countable transcendence basis of
$\PSF(k)|\Quot R$, then $(\Quot R)(T)$ and hence also $\PSF(k)$ would be
countable, which is not the case.
\end{proof}

Denote by $k((G))=k((t^G))$ the power series field with coefficients in
$k$ and exponents in $G$, and by $k(G)$ the smallest subfield of
$k((G))$ which contains all monomials $ct^g$, $c\in k$, $g\in G$.
Denote by $k(G)^c$ its completion; it can be chosen in $k((G))$.
Note that the completion of $\PSF(k)$ is equal to $k(\Q)^c$. Further,
denote by $k[G^{<0}]$ the subring of $k(G)$ generated by $k$ and all
monomials $ct^g$ where $c\in k$ and $0>g\in G$.

\begin{proposition}
Suppose that $k$ is a countable field and $G$ is a countable archimedean
ordered abelian group. Then the countable ring $k[G^{<0}]$ is an
additive complement of the valuation ring of the uncountable henselian
valued field $k(G)^c$.

If $R$ is any subring which is an additive complement of the valuation
ring of $k(G)^c$, then $\Quot R$ is countable and $\trdeg k(G)^c|\Quot
R$ is uncountable.

If in addition $k$ is an ordered field and $I$ is an integer part of
the ordered field $k(G)^c$, then $\Quot I$ is countable and $\trdeg
k(G)^c|\Quot I$ is uncountable.
\end{proposition}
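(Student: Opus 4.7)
The plan is to follow the template of Proposition~\ref{propPSF} and Corollary~\ref{25}, replacing $\Q$ by the countable archimedean group $G$. Three tasks need to be carried out: (i) showing $k(G)^c$ is uncountable, (ii) establishing that $k[G^{<0}]$ is an additive complement of the valuation ring of $k(G)^c$, and (iii) deducing the cardinality and transcendence-degree statements, including the integer-part version.

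For the uncountability of $k(G)^c$ (assuming, as is implicit, that $G\ne\{0\}$), I would fix $g_0>0$ in $G$ and observe that for each sequence $(c_n)_{n\ge 0}\in k^{\N}$, the partial sums of $\sum_{n\ge 0} c_n t^{n g_0}$ lie in $k[G]\subseteq k(G)$ and form a Cauchy sequence in the valuation topology, since $n g_0\to\infty$ in the archimedean group $G$. The resulting family of limits in $k(G)^c$ has cardinality at least $2^{\aleph_0}$, so $k(G)^c$ is uncountable.

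The main work is showing that $k[G^{<0}]$ is an additive complement of the valuation ring in the subfield $k(G)$; the extension to $k(G)^c$ is then immediate from Lemma~\ref{densacip}(a), since $k(G)$ is by construction dense in $k(G)^c$. An arbitrary element of $k(G)$ has the form $a=f/h$ with $f,h\in k[G]$ and $h\ne 0$. Writing $h = c_0 t^{g_0}(1+u)$, where $g_0=\min\mbox{\rm supp}(h)$, $c_0\ne 0$, and $u$ is a finite sum of monomials with support in $G^{>0}$, standard Hahn series inversion gives
\[
1/h \;=\; c_0^{-1} t^{-g_0}\sum_{n\ge 0}(-u)^n\;\in\;k((G)).
\]
Let $\delta>0$ be the minimum of the finite set $\mbox{\rm supp}(u)\subseteq G^{>0}$. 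Archimedeanness of $G$ forces $n\delta\to\infty$ in $G$, so for any fixed $\alpha\in G$ only finitely many $n$ satisfy $n\delta\le\alpha$; since each $u^n$ has finite support contained in $G^{\ge n\delta}$, the part of $\mbox{\rm supp}(1/h)$ lying below $\alpha$ is finite. Taking $\alpha=0$, this shows that $1/h$, and hence $a = f\cdot(1/h)$, has only finitely many negative exponents in its Hahn series expansion. Splitting $a = a_- + a_+$ by sign of exponent, we obtain $a_-\in k[G^{<0}]$ and $a_+$ in the valuation ring of $k(G)$, as required.

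By Lemma~\ref{eqcard}(a), every additive complement $R$ of the valuation ring of $k(G)^c$ has the same cardinality as $k[G^{<0}]$, which is countable since both $k$ and $G$ are. Hence $\Quot R$ is countable; because $k(G)^c$ is uncountable but the algebraic closure of a countable field is countable, $\trdeg k(G)^c|\Quot R$ must be uncountable. For the integer-part version, assuming $k$ is archimedean ordered (so the Hahn series valuation of $k(G)^c$ is its natural valuation), Lemma~\ref{c+z} produces $k[G^{<0}]+\Z$ as a countable integer part of $k(G)^c$, and Lemma~\ref{eqcard}(b) then forces every integer part $I$ to be countable; the transcendence-degree statement for $\Quot I$ follows as before. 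The principal obstacle is the finite-negative-support claim: this is precisely where archimedeanness is essential, since without it the semigroup generated by $\mbox{\rm supp}(u)$ may contain infinitely many elements below $g_0$, so $1/h$ would acquire an infinite negative support and the decomposition would break.
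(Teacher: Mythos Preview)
Your proof is correct and follows essentially the same route as the paper: both establish the additive-complement claim by writing an element of $k(G)$ as a quotient, normalizing the denominator to the form $1+u$ with $vu>0$, expanding $(1+u)^{-1}$ as a geometric series, and using the archimedean property of $G$ to guarantee that only finitely many terms of negative value occur, then invoking Lemma~\ref{densacip}(a) to pass to the completion and Lemma~\ref{eqcard} plus the argument of Corollary~\ref{25} for the cardinality and transcendence-degree conclusions. The only discrepancies are minor: you omit the one-line citation of Lemma~\ref{rk1dense} for the henselianness of $k(G)^c$, while you add (quite reasonably) an explicit uncountability argument and make explicit the archimedean hypothesis on $k$ that is needed to apply Lemma~\ref{c+z} in the integer-part case.
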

\begin{proof}
By Lemma~\ref{rk1dense}, $k(G)^c$ is henselian. (Therefore, it is real
closed if and only if $k$ is real closed and $G$ is divisible.)

We show that the ring $k[G^{<0}]$ is an additive complement of the
valuation ring in $k(G)$. Every element $a$ of the latter is a quotient
of the form
\[a\;=\; \frac{c_1t^{g_1}+\ldots+c_m t^{g_m}}{d_1t^{h_1}+\ldots+d_n
t^{h_n}}\]
with $c_1,\ldots,c_m,d_1,\ldots,d_n\in k$ and $g_1,\ldots,g_m,h_1,\ldots,
h_n\in G$. Without loss of generality we may assume that $h_1$ is the
unique smallest element among the $h_1,\ldots,h_n\,$. Then we rewrite
$a$ as follows:
\[a\;=\; \frac{\frac{c_1}{d_1}t^{g_1-h_1}+\ldots+\frac{c_m}{d_1}
t^{g_m-h_1}}{1+\frac{d_2}{d_1}t^{h_2-h_1}+\ldots+\frac{d_n}{d_1}
t^{h_n-h_1}}\;.\]
By our assumption on $h_1\,$, all summands in the denominator except for
the $1$ have positive value. Hence, we can rewrite $a$ as
\[a\;=\; \left(\frac{c_1}{d_1}t^{g_1-h_1}+\ldots+\frac{c_m}{d_1}
t^{g_m-h_1}\right)\left(1+\sum_{i=1}^{\infty}(-1)^i\left(\frac{d_2}{d_1}
t^{h_2-h_1}+ \ldots+\frac{d_n}{d_1} t^{h_n-h_1}\right)^i\right)\;.\]
In the power series determined by this geometric series, only finitely
many summands will have negative value; this is true since $G$ is
archimedean by hypothesis. Let $b\in k[G^{<0}]$ be the sum of these
summands. Then $v(a-b)\geq 0$. This proves that $k[G^{<0}]$ is an
additive complement of the valuation ring in $k(G)$. Part a) of
Lemma~\ref{densacip} shows that $k[G^{<0}]$ is also an additive
complement of the valuation ring in $k(G)^c$.

All other assertions are deduced like the corresponding assertions of
Corollary~\ref{25}.
\end{proof}

%
%
\section{Proof of Theorem~\ref{MT5}}
We take $k$ to be any field of characteristic $0$ and
\[L\>:=\>\bigcup_{\nu<\lambda} k((C_\nu))\]
to be the henselian valued field constructed in Example~\ref{cenothens}.
The set Neg$\,k((C_\nu))$ of all power series in $k((C_\nu))$ with only
negative exponents is a $k$-algebra which is an additive complement of
the valuation ring $k[[C_\nu]]$ of $k((C_\nu))$. It follows that
\[R\;:=\;\bigcup_{\nu<\lambda} {\rm Neg}\,k((C_\nu))\]
is a $k$-algebra which is an additive complement of the valuation
ring $\bigcup_{\nu< \lambda} k[[C_\nu]]$ of $L$. We wish to show that
its quotient field is $L$. Take any element $a\in L$. Since $L$ is the
union of the $k((C_\nu))\,$, there is some $\nu$ such that $a\in
k((C_\nu))\,$. Pick some negative $\alpha\in C_{\nu+1}\setminus
C_\nu\,$. Then $\alpha<C_\nu\,$. Denote by $t^\alpha$ the monic monomial
of value $\alpha$ in $k((C_{\nu+1}))$. Then $at^\alpha$ has only
negative exponents, so $t^\alpha$ and $at^\alpha$ are both elements of
Neg$\,k((C_{\nu+1}))$. Therefore, $a\in\Quot {\rm Neg}\,(k((C_{\nu+1}))
\subseteq \Quot R$.

Now take any non-zero cardinal number $\kappa \leq \lambda$. We modify
the construction in the final part of the proof of Proposition~\ref{d2}
in that we start with $K'_\nu=k((C_\nu))$, and replace $t_{\nu,\nu}$ by
$t_{\nu,\nu}-t_{\nu+1,\nu}$ (or by $t_{\nu,\nu}- t_{\lambda'+1,\nu}$ in
the limit case) only as long as $\nu\leq \kappa$. Then the elements
$t_{\nu,\nu}\,$, $\nu<\kappa$, will be algebraically independent over
$K=\bigcup_{\nu<\lambda} K_\nu$, we have $\trdeg L|K=\kappa$, and $K$
will be dense in $L$.

For every $\nu<\lambda$, $w_\nu$ induces a valuation preserving
isomorphism from $(k((C_\nu)),v)$ and from $(K_\nu,v)$ onto
$(Lw_\nu,\ovl{w}_\nu)$. Hence, $\iota_{\nu}:=(w_\nu|_{K_\nu})^{-1}\circ
w_\nu|_{k((C_\nu))}$ is a valuation preserving isomorphism from
$(k((C_\nu)),v)$ onto $(K_\nu,v)$. For $\nu<\mu<\lambda$, $\iota_\mu$ is
an extension of $\iota_\nu\,$. Hence, $\iota:=\bigcup_{\nu<\lambda}
\iota_\nu$ is a valuation preserving isomorphism from $(L,v)$ onto
$(K,v)$. The image $R_\kappa$ of $R$ under $\iota$ is a $k$-algebra
which is an additive complement of the valuation ring of $K$ and has
quotient field $K$. Consequently, $\trdeg L|\Quot R_\kappa=\kappa$.
Since $K$ is dense in $L$, $R_\kappa$ is also an additive complement of
the valuation ring of $L$.

\pars
If in addition $k$ is an archimedean ordered field and $<$ is any
ordering on $L$ compatible with $v$, then $v$ is the natural valuation
of $(L,<)$. Hence by Lemma~\ref{c+z}, $R+\Z$ and $R_\kappa+\Z$ are
integer parts of $(L,<)$. Since $\Quot(R+\Z)=\Quot R$ and
$\Quot(R_\kappa+\Z)=\Quot R_\kappa\,$, this completes our proof.
                                                             \QED

%
%
\section{Weak complements}
\begin{lemma}                               \label{ipwc}
Let $I$ be an integer part of the ordered field $(K,<)$. If $v$ denotes
the natural valuation of $(K,<)$, then $I$ is a weak complement in
$(K,v)$.
\end{lemma}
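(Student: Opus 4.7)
The plan is to verify directly the two defining conditions of a weak complement, using only the compatibility relation (\ref{v<comp}) together with the characterization of an integer part. No deep structure theory is needed; the whole content of the lemma is that the natural valuation sees the unit interval $[0,1)$ of the ordering as sitting inside the valuation ring, while $1$ being the least positive element of $I$ forces the elements of $I$ to lie outside the maximal ideal.

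First I would verify the approximation property. Given $a\in K$, the integer part condition supplies some $r\in I$ with $r\leq a<r+1$, so $0\leq a-r<1$. If $a=r$ there is nothing to prove; otherwise $0<a-r<1$, and since the natural valuation is compatible with $<$, property (\ref{v<comp}) gives $v(a-r)\geq v(1)=0$, which is what we need.

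Next I would verify $vr\leq 0$ for every nonzero $r\in I$. Since $I$ is a subring, $-r\in I$ as well, so we may assume $r>0$. The integer part property forces $1$ to be the least positive element of $I$ (as noted in the introduction just after the definition of integer part), hence $r\geq 1$. Applying (\ref{v<comp}) once more yields $vr\leq v(1)=0$.

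The only potentially delicate point is the status of $0\in I$: under the usual convention $v0=\infty$, the inequality $v0\leq 0$ is vacuous, and this is clearly how the definition of weak complement is to be read (otherwise the zero subring would not qualify). So the argument above covers all of $I$, and no real obstacle arises. The entire proof is a two-line unpacking of the definitions, which I will present as such.
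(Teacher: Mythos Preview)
Your proof is correct and follows essentially the same route as the paper's: both conditions are checked via the compatibility relation (\ref{v<comp}) together with the fact that $1$ is the least positive element of $I$. The only cosmetic difference is that for the condition $vr\leq 0$ the paper argues contrapositively (showing that $vx>0$ forces $nx\leq 1$ for all $n$, hence $0<x<1$, hence $x\notin I$), while you argue directly from $r\geq 1$; your version is slightly more streamlined.
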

\begin{proof}
Take $0<x\in K$ and assume that $vx>0$. Then for all $n\in\N$, also
$vnx>0=v1$ which by (\ref{v<comp}) implies that $0<nx\leq 1$.
Consequently, $0<x<1$ and thus, $x\notin I$. This proves that
$vr\leq 0$ for all $r\in I$.

For every $a\in K$ there is $r\in I$ such that $0\leq a-r<1$. Again by
(\ref{v<comp}), this implies that $v(a-r)\geq v1=0$.
\end{proof}

In what follows, let $R$ be a weak complement in a valued field $(K,v)$.
For every convex subgroup $\Gamma$ of $vK$, we define
\[R_\Gamma\>:=\>\{r\in R\mid vr\in\Gamma\cup\{\infty\}\}\;.\]

\begin{lemma}
For every convex subgroup $\Gamma$ of $vK$, $R_\Gamma$ is a
subring of $K$. Denote by $K_\Gamma$ its quotient field.
Then $vK_\Gamma=\Gamma$.
\end{lemma}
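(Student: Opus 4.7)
My plan has two parts, corresponding to the two assertions.

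For the ring claim, closure of $R_\Gamma$ under multiplication is immediate: if $r_1,r_2\in R_\Gamma$ then $r_1r_2\in R$ because $R$ is a ring, and $v(r_1r_2)=vr_1+vr_2\in\Gamma\cup\{\infty\}$ since $\Gamma$ is a subgroup. Closure under addition is the step I expect to be subtlest and is really where both defining properties of a weak complement are used. Given $r_1,r_2\in R_\Gamma$, the sum $r_1+r_2$ lies in $R$, so by the first bullet defining a weak complement, $v(r_1+r_2)\le 0$. On the other hand, the ultrametric inequality gives $v(r_1+r_2)\ge\min\{vr_1,vr_2\}$, which is an element of $\Gamma$. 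Thus $v(r_1+r_2)$ lies in the interval between an element of $\Gamma$ and $0\in\Gamma$, so convexity of $\Gamma$ forces it into $\Gamma\cup\{\infty\}$.

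For the value group computation, the inclusion $vK_\Gamma\subseteq\Gamma$ is straightforward: every element of $K_\Gamma$ is a quotient $r_1/r_2$ with $r_i\in R_\Gamma$, and $vr_1-vr_2\in\Gamma$ since $\Gamma$ is a subgroup. The reverse inclusion is where I would invoke the second defining property of a weak complement. Given $\gamma\in\Gamma$ with $\gamma<0$, pick $a\in K$ with $va=\gamma$ and then $r\in R$ with $v(a-r)\ge 0$; since $v(a-r)>va$, the ultrametric rule forces $vr=va=\gamma$, and this $r$ lies in $R_\Gamma$ by definition. For $\gamma>0$ in $\Gamma$, apply the previous case to $-\gamma$ and take an inverse in $K_\Gamma$. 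The case $\gamma=0$ is covered by $1\in R_\Gamma$.

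The only genuinely nontrivial point is the addition closure argument in the first paragraph, which fails if one drops either ``$vr\le 0$ for all $r\in R$'' (the upper bound $0$ would not lie in $\Gamma$ a priori) or the convexity of $\Gamma$. Everything else is a direct bookkeeping consequence of the axioms of a weak complement combined with the group and convexity properties of $\Gamma$.
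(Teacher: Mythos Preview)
Your argument is correct and matches the paper's proof essentially line for line: the paper also bounds $v(r-s)$ between $\min\{vr,vs\}\in\Gamma$ and $0$ and uses convexity, and it obtains $\Gamma^{<0}\subseteq vR_\Gamma$ exactly as you do by approximating an element of negative value by some $r\in R$. The one cosmetic difference is that the paper checks closure under subtraction rather than addition, which spares you the (trivial) separate verification that $R_\Gamma$ is closed under additive inverses.
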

\begin{proof}
Take $r,s\in R_\Gamma\,$. Then $vr,vs\in \Gamma$. Since $r-s\in R$, we
have $0\geq v(r-s)\geq\min\{vr,vs\}$, showing that $v(r-s)\in \Gamma$
and thus $r-s\in R_\Gamma\,$. Further, $rs\in R$ and $vrs=vr+vs\in
\Gamma$, showing that $rs\in R_\Gamma\,$. This proves that $R_\Gamma$ is
a subring of $K$.

Since $vR_\Gamma:=\{vr\mid r\in R_\Gamma\}\subseteq\Gamma$, we know that
$vK_\Gamma\subseteq \{\alpha-\beta\mid\alpha,\beta\in\Gamma\}=\Gamma$.
On the other hand, for every $a\in K$ with $va \in \Gamma^{<0}$ there is
some $r\in R$ such that $v(a-r)\geq 0$. It follows that $vr=va\in\Gamma$
and thus $r\in R_\Gamma$ and $va=vr\in vR_\Gamma\,$. Hence,
$\Gamma^{<0}\subseteq vR_\Gamma\,$, which implies that
$vK_\Gamma=\Gamma$.
\end{proof}

Note that $K_\Gamma$ is a subfield of the quotient field of $R$. Since
$vK_\Gamma=\Gamma$, we have that $v_\Gamma K_\Gamma=\{0\}$. This means
that the residue map associated with $v_\Gamma$ induces an isomorphism
on $K_\Gamma\,$. This is in fact an isomorphism
\[(K_\Gamma, v)\>\isom\>(K_\Gamma v_\Gamma,\ovl{v}_\Gamma)\]
of valued fields.

\begin{lemma}                               \label{embdsrf}
For every non-trivial convex subgroup $\Gamma$ of $vK$, the valued
residue field $(K_\Gamma v_\Gamma,\ovl{v}_\Gamma)$ lies dense in
$(Kv_\Gamma,\ovl{v}_\Gamma)$.
\end{lemma}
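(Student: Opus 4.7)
The plan is to fix $\xi \in Kv_\Gamma$ together with an arbitrary $\gamma \in \Gamma$, and produce an element $\eta \in K_\Gamma v_\Gamma$ such that $\ovl{v}_\Gamma(\xi - \eta) \geq \gamma$. Since the case $\xi = 0$ is trivial (take $\eta = 0$), I lift $\xi$ to some $a \in K$ with $v_\Gamma a = 0$ and $av_\Gamma = \xi$, so $va \in \Gamma$. The idea is to approximate $a$ to precision $\gamma$ in $(K,v)$ by an element $b \in K_\Gamma$; then $b$ will automatically have $v_\Gamma b \geq 0$, so its $v_\Gamma$-residue will serve as $\eta$.

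To build $b$, I first use the previous lemma, which gives $vK_\Gamma = \Gamma$, to choose some $d \in K_\Gamma$ with $vd = -\gamma$. Applying the weak complement property of $R$ to the element $da \in K$ produces $r \in R$ with $v(da - r) \geq 0$. I then set $b := r/d$, so that
$$v(a - b) \;=\; v(da - r) - vd \;\geq\; \gamma$$
follows immediately from the construction.

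The main obstacle, and the point where care is required, is showing that $b$ really lies in $K_\Gamma$; this reduces to checking $r \in R_\Gamma$, i.e., $vr \in \Gamma \cup \{\infty\}$. If $r = 0$ this is obvious, and if $vr = 0$ then clearly $vr \in \Gamma$. Otherwise $vr < 0$, and the ultrametric inequality applied to $v(da - r) \geq 0$, together with $vr \leq 0$, forces $vr = v(da) = va - \gamma$, which lies in $\Gamma$ because both $va$ and $\gamma$ do. In every case $b = r/d \in K_\Gamma$.

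Finally, from $vb = vr - vd \in \Gamma$ I obtain $v_\Gamma b = 0$, so $a$ and $b$ both admit $v_\Gamma$-residues and $(a-b)v_\Gamma = \xi - bv_\Gamma$. If $v(a-b) \in \Gamma$, then $\ovl{v}_\Gamma(\xi - bv_\Gamma) = v(a-b) \geq \gamma$; if instead $v(a-b)$ exceeds every element of $\Gamma$, then $(a-b)v_\Gamma = 0$ and the inequality holds with $\infty$ on the left. In either case $\eta := bv_\Gamma \in K_\Gamma v_\Gamma$ gives the required approximation, which proves density.
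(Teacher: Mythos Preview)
Your proof is correct and follows essentially the same approach as the paper's: lift $\xi$ to $a$, scale by an element of value $-\gamma$ from $K_\Gamma$, apply the weak complement property to get $r$, and set $b=r/d$. The only differences are cosmetic: the paper picks the scaling element $c$ from $R_\Gamma$ rather than $K_\Gamma$ and assumes $\gamma>va$ from the outset (so that $vr=v(ac)<0$ is immediate), whereas you allow arbitrary $\gamma\in\Gamma$ and handle the cases $r=0$ and $vr=0$ separately; you also spell out the passage back to residues at the end.
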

\begin{proof}
We have to show: if $a\in K$ such that $va\in \Gamma$, then for every
positive $\gamma\in \Gamma$ such that $\gamma>va$ there is some $b\in
K_\Gamma$ such that $v(a-b) \geq\gamma$. Since $\Gamma^{<0}\subseteq
vR_\Gamma$ by the foregoing lemma, we may pick some $c\in R_\Gamma$ such
that $vc=-\gamma$. Then there is some $r\in R$ such that $v(ac-r)\geq
0$. Since $vac=va-\gamma\in\Gamma^{<0}$, we have $vr=vac\in\Gamma^{<0}$
and therefore, $r\in R_\Gamma$. Setting $b=\frac{r}{c}\in K_\Gamma\,$,
we obtain $v(a-b)\geq -vc=\gamma$.
\end{proof}

Now we give examples for valued fields and ordered fields without
weak complements or integer parts.
\sn
{\bf Basic construction:}
Take an arbitrary field $k$ and $t$ a transcendental element over $k$.
Denote by $v_t$ the $t$-adic valuation on $k(t)$. Choose some countably
generated separable-algebraic extension $(k_1,v_t)$ of $(k(t),v_t)$.
Take two algebraically independent elements $x,y$ over $k(t)$. Then by
Theorem~1.1 of [K1] there exists a non-trivial valuation $w$ on $K:=
k(t,x,y)$ whose restriction to $k(t)$ is trivial, whose value group is
$\Z$ and whose residue field is $k_1\,$; since $w$ is trivial on
$k(t)$, we may assume that the residue map associated with $w$ induces
the identity on $k(t)$. Now we take the valuation $v$ on the rational
function field $K$ to be the composition of $w$ with $v_t$:
\[v\>=\>w\circ v_t\;.\]

\begin{example} \rm                         \label{e1}
We take $k$ to be one of the prime fields $\Q$ or $\F_p$ for some prime
$p$. We choose $k_1$ such that $k_1v_t=k$ and that $v_t k_1/v_t k(t)$ is
infinite. Take $\Gamma$ to be the convex subgroup of $vK$ such that
$v_\Gamma=w$; in fact, $\Gamma$ is the minimal convex subgroup
containing $vt$.

Suppose $K$ admits a weak complement $R$. Then by Lemma~\ref{embdsrf}
the isomorphic image $K_\Gamma w$ of the subfield $K_\Gamma$ of $K$ is
dense in the valued residue field $(k_1,v_t)$. It follows from
Lemma~\ref{denseimm} that $v_t (K_\Gamma w)=v_t k_1\,$. Note that the
isomorphism $K_\Gamma\rightarrow K_\Gamma w$ preserves the prime field
$k$ of $K_\Gamma$. Since $v_t (K_\Gamma w)\ne\{0\}$, it follows from
Corollary~\ref{aaa} that $K_\Gamma w$ cannot be algebraic over the
trivially valued subfield $k$. Hence, $\trdeg K_\Gamma w|k=1$, and we
take some $t'\in K_\Gamma$ such that $t'w$ is transcendental over $k$.
It follows that $K_\Gamma| k(t')$ is algebraic. As $v_t k_1/v_t k(t)$ is
infinite, Lemma~\ref{degvr} shows that $K_\Gamma w |k(t)$ and hence also
$K_\Gamma|k(t')$ must be an infinite extension.

Since $\trdeg K|k=3$, we have that $\trdeg K|k(t')=2$. Let $\{x',y'\}$
be a transcendence basis for this extension. Because the algebraic
extension $K_\Gamma| k(t')$ is linearly disjoint from the purely
transcendental extension $k(t',x',y')|k(t')$, the extension
$K_\Gamma(x',y')|k(t',x',y')$ is infinite. But it is contained in the
finite extension $K|k(t',x',y')$. This contradiction shows that $K$
cannot admit weak complements. Note that by construction,
\[Kv\>=\>k_1v_t\>=\>k\>\subset\> K\;.\]
\end{example}

\begin{example} \rm                         \label{e2}
In the foregoing example, take $k=\Q$. By Lemma~\ref{liftord}, there is
an ordering $<$ on the rational function field $K=k(t,x,y)$ which is
compatible with the valuation $v$. Then $(K,<)$ does not admit an
integer part, because any such integer part would be a weak complement
for $v$.
\end{example}

\begin{example} \rm                         \label{e3}
In Example~\ref{e1}, take $k=\Q$. By Lemma~\ref{liftord} there is an
ordering on $k(t)$ compatible with the $v_t$-adic valuation. The real
closure $k(t)^{\rm rc}$ of $k(t)$ with respect to this ordering is a
countably generated infinite algebraic extension of $k(t)$. So we may
take $k_1= k(t)^{\rm rc}$. The valuation $v_t$ extends to a valuation of
$k_1$ which is compatible with its ordering. Again by
Lemma~\ref{liftord} we may choose a lifting of the ordering of $k_1$ to
$K$ through the valuation $w$. This ordering on $K$ induces through
$v=w\circ v_t$ the same ordering on the residue field $k$ as the
ordering on $k(t)^{\rm rc}$ induces through $v_t\,$; in particular, we
find that the chosen ordering on $K$ is compatible with the valuation
$v$.

Now pick any positive integer $n$ and consider the $n$-real closure
$K^{\rm rc(n)}$ of $(K,<)$ as defined in [Bg]. It is encluded in the
real closure $K^{\rm rc}$ of $(K,<)$, so we can extend $w$ to the real
closure (cf.\ Lemma~\ref{extcv}) and then restrict it to
$K^{\rm rc(n)}$; the valuation so obtained is still compatible with the
ordering. As $K^{\rm rc}w =(Kw)^{\rm rc}=k(t)^{\rm rc}=Kw$ by
Lemma~\ref{extcv}, we have that $K^{\rm rc(n)}w= k(t)^{\rm rc}$. So
$w\circ v_t$ is an extension of $v$ to $K^{\rm rc(n)}$, and we denote it
again by $v$. As before, we see that it is compatible with the ordering.

Suppose $K^{\rm rc(n)}$ admits a weak complement $R$. We proceed as in
Example~\ref{e1}, with $K$ replaced by $L:=K^{\rm rc(n)}$. As
$L_\Gamma w$ is dense in $(Lw,v_t)= (k(t)^{\rm rc},v_t)$ we can infer
from Lemma~\ref{denseimm} that $v_t (L_\Gamma w)=v_t k(t)^{\rm rc}$.
This in turn is the divisible hull of $v_t k(t)=\Z$ (cf.\
Lemma~\ref{extcv}). Hence, $v_t(L_\Gamma w)=\Q$ and thus also
$vL_\Gamma=\Q$. But as in Example~\ref{e1} one shows that the relative
algebraic closure of $k(t')$ in $K(t')$ must be a finite extension $E$
of $k(t')$. Now $L_\Gamma$ lies in the relative algebraic closure $E'$
of $E$ in $L$, which is just the $n$-real closure of $E$. But the value
group of the $n$-real closure of $E$ is the $n$-divisible hull of $vE$,
which in turn is a finite extension of $vk(t')=\Z$. So the value group
of $E'$ is still isomorphic to the $n$-divisible hull of $\Z$. This
contradicts the fact that its subfield $L_\Gamma$ has value group $\Q$.
This contradiction proves that $K^{\rm rc(n)}$ does not admit weak
complements for its compatible valuation $v$, and therefore does not
admit integer parts.
\end{example}

In order to obtain an example where the valued field $(K,v)$ admits an
embedded residue field {\it and} a cross-section, we modify
Example~\ref{e1} as follows.

\begin{example} \rm                         \label{e4}
In our basic construction, we take $k=k_0(z)$ where $k_0$ is any
prime field and $z$ is transcendental over $k_0\,$. The
henselization $k_0(z)^h$ of $k_0(z)$ with respect to the $z$-adic
valuation $v_z$ is a countably generated separable-algebraic extension
of $k_0(z)$. Therefore, we may choose $k_1$ to be a countably generated
separable-algebraic extension of $k(t)$ such that $v_tk_1=\Z$ and
$k_1v_t=k_0(z)^h$ (cf.\ Theorem~2.14 of [K1]). Then we take
\[v'\>=\>v\circ v_z \>=\> w\circ v_t\circ v_z\>.\]
Let $\Gamma$ be the convex subgroup of $v'K$ such that $v'_\Gamma=w$;
now $\Gamma$ is the minimal convex subgroup containing $v't$. Suppose
that $(K,v')$ admits a weak complement $R$. Then by Lemma~\ref{embdsrf}
the isomorphic image $K_\Gamma w$ of the subfield $K_\Gamma$ of $K$ is
dense in the valued residue field $(k_1,v_t\circ v_z)$. The isomorphism
$K_\Gamma\rightarrow K_\Gamma w$ preserves the prime field $k_0$ of
$K_\Gamma$. From Lemma~\ref{denseimm} we infer that $v_t\circ v_z
(K_\Gamma w)=v_t\circ v_z (k_1)$. This value group has two non-trivial
convex subgroups, namely, itself and the smallest convex subgroup which
contains $v_t\circ v_z (z)$. We choose elements $t',z' \in K_\Gamma$
such that $v_t\circ v_z(t'w)> 0$ lies in the former, but not in the
latter, and $v_t\circ v_z(z'w)> 0$ lies in the latter. Then these two
values are rationally independent. Thus by Theorem~1 of [Br], Chapter
VI, \S10.3, $t'w,z'w$ are algebraically independent over the trivially
valued field $k_0\,$. But as $K_\Gamma w\subseteq k_1\,$, we must have
$\trdeg K_\Gamma w|k_0=2$. Hence $K_\Gamma w|k_0(t'w,z'w)$ is algebraic,
and so is $K_\Gamma|k_0(t',z')$.

By Lemma~\ref{densevw}, $K_\Gamma w$ is also dense in $(k_1,v_t)$. Hence
$(K_\Gamma w)v_t=k_1 v_t=k_0(z)^h$ by Lemma~\ref{denseimm}. Hence,
$z\in (K_\Gamma w)v_t$ and we can in fact choose $z'$ such that
$(z'w)v_t=z$. Consequently, $k_0(t'w,z'w)v_t=k_0(z)$ (cf.\ the already
cited Theorem~1 of [Br]). Since $(K_\Gamma w)v_t=k_0(z)^h$ is an
infinite extension of $k_0(z)$ by part a) of
Proposition~\ref{ffnothens}, it follows from Lemma~\ref{degvr} that
$K_\Gamma w$ is an infinite extension of $k_0(t'w,z'w)$. Thus,
$K_\Gamma$ is an infinite extension of $k_0(z',t')$.

Since $\trdeg K|k_0=4$ and $\trdeg k_0(z',t')|k_0=\trdeg k_0(z'w,t'w)|
k_0 =2$, we have that $\trdeg K|k_0(z',t')=2$. Let $\{x',y'\}$ be a
transcendence basis for this extension. Because the algebraic extension
$K_\Gamma| k_0(z',t')$ is linearly disjoint from the purely
transcendental extension $k_0(z',t',x',y')|k_0(z',t')$, the
extension $K_\Gamma(x',y')|k_0(z',t',x',y')$ is infinite. But it is
contained in the finite extension $K|k_0(z',t',x',y')$. This
contradiction shows that $(K,v')$ cannot admit weak complements.

The value group $v'K$ is the lexicographic product $wK\times
v_tk_1\times v_zk_0(z)\isom\Z\times \Z\times\Z$ since $v_z k_0(z)=v_z
k_0(z)^h=\Z$. This shows that $(K,v')$ admits a cross-section. The
residue field $Kv'=k_0$ is embedded in $K$. Note that $K$ is a rational
function field of transcendence degree 4 over its residue field.
\end{example}

\begin{example} \rm
In the foregoing example, take $k_0=\Q$. By Lemma~\ref{liftord}, there
is an ordering $<$ on the rational function field $K=k(t,x,y,z)$ which
is compatible with the valuation $v'$. Then $(K,<)$ does not admit an
integer part. Nevertheless, the valuation $v'$, which is the natural
valuation of the ordering $<$ since $Kv'=\Q$ is archimedean ordered,
admits an embedding of its residue field and a cross-section.
\end{example}

Finally, let us note that Proposition~\ref{ffnothens} shows:
\begin{proposition}
None of the valued fields in the above examples are henselian. Also, the
natural valuation of the example constructed by Boughattas in [Bg] is
not henselian.
\end{proposition}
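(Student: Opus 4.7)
The plan is to apply Proposition~\ref{ffnothens} directly; the proposition itself does all the work, and this final proposition is essentially a catalogue of applications. Part a) says that no non-trivial valuation on a finite extension of a rational function field $K(T)$ can be henselian, while part b) extends this to the closure of $K(T)$ under successive adjunction of roots of polynomials of bounded degree.

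First I would handle Examples~\ref{e1}, \ref{e2}, \ref{e4} and its ordered companion. In each case the valued field under consideration is a rational function field $k(t,x,y)$ or $k_0(z,t,x,y)$ of transcendence degree $\geq 1$ over a prime subfield ($\Q$ or $\F_p$, respectively $k_0$) on which the constructed valuation $v$ or $v'$ is trivial. Hence Proposition~\ref{ffnothens}a) applies directly and shows that these valuations are not henselian.

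Next I would treat Example~\ref{e3}. Here the field is $K^{\rm rc(n)}$, which by definition is obtained from the rational function field $K=k(t,x,y)$ (with $k=\Q$) by iterated adjunction of roots of polynomials of degree $\leq n$, i.e.\ a closure operation of exactly the type described in Proposition~\ref{ffnothens}b). The valuation on $K^{\rm rc(n)}$ considered in Example~\ref{e3} is a non-trivial extension of the valuation $v$ on $K$, and the hypotheses of Proposition~\ref{ffnothens}b) are met (with the same $n$), so the extended valuation cannot be henselian. The same argument applies to Boughattas's example, which is likewise an $n$-real closure of an ordered rational function field over $\Q$, equipped with its natural (and therefore non-trivial, since the ordered ground field is non-archimedean) valuation.

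The only point that might require care is confirming that ``$n$-real closure'' as defined in [Bg] is literally obtained by iterating adjunction of roots of polynomials of degree $\leq n$; once this is noted, Proposition~\ref{ffnothens}b) covers both Example~\ref{e3} and Boughattas's example uniformly. Thus no genuine obstacle arises: the proof is a short application of Proposition~\ref{ffnothens} to each of the listed constructions.
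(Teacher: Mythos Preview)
Your treatment of Examples~\ref{e1}, \ref{e2}, \ref{e4} (and its ordered companion) via Proposition~\ref{ffnothens}a) is fine and matches the paper.

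The gap is in your handling of Example~\ref{e3} and Boughattas's example. You want to apply Proposition~\ref{ffnothens}b) directly to $K^{\rm rc(n)}$, but the $n$-real closure is \emph{not} the field $F$ of that proposition. The field $F$ is obtained by adjoining \emph{all} roots of polynomials of degree $\leq n$, whereas the $n$-real closure adjoins only those roots compatible with the ordering (it must remain an ordered field). So $K^{\rm rc(n)}$ is a proper subfield of $F$ in general, and Proposition~\ref{ffnothens}b) gives no direct information about it. You flag this yourself as ``the only point that might require care'' and then wave it away; in fact it is the one place where an extra step is needed.

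The paper closes this gap with Lemma~\ref{algehens}: the field $F$ of Proposition~\ref{ffnothens}b) is an algebraic extension of $K^{\rm rc(n)}$, and $F$ is not henselian by part b). If $K^{\rm rc(n)}$ were henselian, then by Lemma~\ref{algehens} its algebraic extension $F$ would be henselian too, a contradiction. This contrapositive use of Lemma~\ref{algehens} is short but essential, and it is exactly what your argument is missing.
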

\begin{proof}
It follows from part a) of Proposition~\ref{ffnothens} that the rational
function fields of Examples~\ref{e1} and~\ref{e2} are not henselian.
The fields of Example~\ref{e3} and Boughattas' example are $n$-real
closures of algebraic function fields. The ``$n$-algebraic closures''
of part b) of Proposition~\ref{ffnothens} are algebraic extensions
of the $n$-real closures. Since they are not henselian,
Lemma~\ref{algehens} shows that the same holds for the
$n$-real closures.
\end{proof}

\bn
{\bf References}
\newenvironment{reference}%
{\begin{list}{}{\setlength{\labelwidth}{5em}\setlength{\labelsep}{0em}%
\setlength{\leftmargin}{5em}\setlength{\itemsep}{-1pt}%
\setlength{\baselineskip}{3pt}}}%
{\end{list}}
\newcommand{\lit}[1]{\item[{#1}\hfill]}
\begin{reference}
\lit{[B--K--K]} {Biljakovic, D.\ -- Kotchetov, M.\ -- Kuhlmann, S.$\,$:
{\it Primes and irreducibles in truncation integer parts of real closed
fields}, this volume}
\lit{[Bg]} {Boughattas, S.$\,$: {\it R\'esultats optimaux sur
l'existence d'une partie enti\`ere dans les corps ordonn\'es}, J.\
Symb.\ Logic {\bf 58} (1993), 326--333}
\lit{[Br]} {Bourbaki, N.$\,$: {\it Commutative algebra}, Paris (1972)}
\lit{[E]} {Endler, O.$\,$: {\it Valuation theory}, Berlin (1972)}
\lit{[Fo]} {Fornasiero, A.$\,$: {\it Embedding Henselian fields in
power series}, preprint}
\lit{[Fu]} {Fuchs, L.$\,$: {\it Partially ordered algebraic
systems}, Pergamon Press, Oxford (1963)}
\lit{[K1]} {Kuhlmann, F.-V.: {\it Value groups, residue fields and bad
places of rational function fields}, Trans.\ Amer.\ Math.\ Soc.\
{\bf 356} (2004), 4559--4600}
\lit{[K2]} {Kuhlmann, F.--V.$\,$: Book in preparation. Preliminary
versions of several chapters available at:\n
{\tt http://math.usask.ca/$\,\tilde{ }\,$fvk/Fvkbook.htm}}
\lit{[KS]} {Kuhlmann, S.: {\it Ordered Exponential Fields}, The Fields
Institute Monograph Series, vol.\ 12, AMS Publications (2000)}
\lit{[L]} {Lang, S.$\,$: {\it The theory of real places},
Ann.\ of Math.\ {\bf 57} (1953), 378--391}
\lit{[M--R]} {Mourgues, M.\ - H.\ -- Ressayre, J.- P.$\,$:
{\it Integer parts Every real closed field has an Integer Part},
Journal of Symbolic Logic, {\bf 58} (1993), 641--647}
\lit{[M--S]} {MacLane, S.\ -- Schilling, O.F.G.$\,$: {\it
Zero-dimensional branches of rank 1 on algebraic varieties}, Annals of
Math.\ {\bf 40} (1939), 507--520}
\lit{[N]} {Neukirch, J.$\,$: {\it Algebraic number theory},
Springer, Berlin (1999)}
\lit{[P]} {Prestel, A.$\,$: {\it Lectures on Formally Real Fields},
Springer Springer Lecture Notes in Math.\ {\bf 1093},
Berlin--Heidelberg--New York--Tokyo (1984)}
\lit{[R]} {Ribenboim, P.$\,$: {\it Th\'eorie des valuations}, Les
Presses de l'Uni\-versit\'e de Mont\-r\'eal (1964)}
\lit{[W]} {Warner, S.$\,$: {\it Topological fields}, Mathematics
studies {\bf 157}, North Holland, Amsterdam (1989)}
\lit{[Z--S]} {Zariski, O.\ -- Samuel, P.$\,$: {\it Commutative
Algebra}, Vol.\ II, New York--Heidel\-berg--Berlin (1960)}
\end{reference}
\adresse

\end{document}